\newtheorem{defin}{Definition}
\newtheorem{lemma}{Lemma}
\newtheorem{prop}{Proposition}
\newtheorem{theo}{Theorem}
\newenvironment{proof}{\medskip\par\noindent{\bf Proof}}{\hfill $\Box$
\medskip\par}
\begin{document}
\title{On complex singularity analysis for some linear partial differential equations in $\mathbb{C}^{3}$}
\author{{\bf A. Lastra\footnote{The author is partially supported by the project MTM2012-31439 of Ministerio de Ciencia e
Innovacion, Spain}, S. Malek\footnote{The author is partially supported by the french ANR-10-JCJC 0105 project and the PHC
Polonium 2013 project No. 28217SG.}, C. Stenger,}\\
University of Alcal\'{a}, Departamento de Matem\'{a}ticas,\\
Ap. de Correos 20, E-28871 Alcal\'{a} de Henares (Madrid), Spain,\\
University of Lille 1, Laboratoire Paul Painlev\'e,\\
59655 Villeneuve d'Ascq cedex, France,\\
University of La Rochelle, Avenue Michel Cr\'epeau,\\
17042 La Rochelle cedex France\\
{\tt alberto.lastra@uah.es}\\
{\tt Stephane.Malek@math.univ-lille1.fr }\\
{\tt catherine.stenger@univ-lr.fr }}
\date{March, 28 2013}
\maketitle
\thispagestyle{empty}
{ \small \begin{center}
{\bf Abstract}
\end{center}
We investigate the existence of local holomorphic solutions $Y$ of linear partial differential equations in three complex variables
whose coefficients are singular along an analytic variety $\Theta$ in $\mathbb{C}^{2}$. The coefficients are written as linear
combinations of powers of a solution $X$ of some first order nonlinear partial differential equation following an idea we have initiated in
a previous work \cite{mast}. The solutions $Y$ are shown to develop
singularities along $\Theta$ with estimates of exponential type depending on the growth's rate of $X$ near the singular variety.
We construct these solutions with the help of series of functions with infinitely many variables which involve 
derivatives of all orders of $X$ in one variable. Convergence and bounds estimates of these series are studied
using a majorant series method which leads to an auxiliary functional equation that contains differential operators in
infinitely many variables. Using a fixed point argument, we show that these functional equations actually have solutions in some
Banach spaces of formal power series.\medskip

\noindent Key words: singular linear partial differential equations, linear partial differential equations with infinitely many variables,
formal series with infinitely many variables, singularity analysis. 2000 MSC: 35C10, 35C20.} \bigskip \bigskip

\section{Introduction}

In this paper, we study a family of linear partial differential equations of the form
\begin{multline}
\partial_{w}^{S}Y(t,z,w) = \sum_{k \in \mathcal{S}} (a_{1,k}(t,z,w)\partial_{t}\partial_{w}^{k}Y(t,z,w)\\
+ a_{2,k}(t,z,w)\partial_{z}\partial_{w}^{k}Y(t,z,w)+ a_{3,k}(t,z,w)\partial_{w}^{k}Y(t,z,w)) \label{lin_pde_Y_intro}
\end{multline}
for given initial data $\partial_{w}^{j}Y(t,z,0) = \varphi_{j}(t,z)$, $0 \leq j \leq S-1$, where $\mathcal{S}$ is a subset of
$\mathbb{N}^{2}$ and $S$ is an integer which satisfy the constraints
(\ref{shape_lin_PDE_in_C3}). The coefficients $a_{m,k}(t,z,w)$ are holomorphic functions on some domain
$(D(0,r)^{2} \setminus \Theta) \times D(0,\bar{w})$ where $\Theta$ is some analytic variety of $D(0,r)^{2}$ (where $D(0,\delta)$ denotes
the disc centered at 0 in $\mathbb{C}$ with radius $\delta>0$) and the initial data $\varphi_{j}(t,z)$ are assumed to be holomorphic
functions on the polydisc $D(0,r)^{2}$. 

In order to avoid cumbersome statements and tedious computations, the authors have chosen to restrict their study to equations
(\ref{lin_pde_Y_intro}) that involve at most first order derivatives with respect to $t$ and $z$ but the method proposed in this work can
also be extended to higher order derivatives too.

In this work, we plan to construct holomorphic solutions of the problem (\ref{lin_pde_Y_intro}) on
$(D(0,r)^{2} \setminus \Theta) \times D(0,\bar{w})$ and we will give precise growth estimates for these solutions
near the singular variety $\Theta$ of the coefficients $a_{m,k}(t,z,w)$ (Theorem 1).

There exists a huge literature on the study of complex singularities and analytic continuation of solutions to linear partial differential
equations starting from the fundamental contributions of J. Leray in \cite{le}. Many important results are known for singular
initial data and concern equations with bounded holomorphic coefficients. In that context, the singularities of the solution are generally
contained in characteristic hypersurfaces issued from the singular locus of the initial conditions. For meromorphic initial
data, we may refer to \cite{copata}, \cite{ha}, \cite{ou1}, \cite{ou2} and for more general ramified multivalued initial data, we may cite
\cite{halewa}, \cite{ig}, \cite{stsh}, \cite{wa}. In our framework, the initial data are assumed to be non singular and the coefficients
of the equation now carry the singularities. To the best knowledge of the authors, few results have been worked out in that case.
For instance, the research of so-called \emph{fuchsian singularities} in the context of partial differential equations is widely
developed, we provide \cite{al}, \cite{bolepa}, \cite{geta}, \cite{man} as examples of references in this direction.
It turns out that the situation we consider is actually close to a singular perturbation problem since the nature
of the equation changes nearby the singular locus of it's coefficients.

This work is a continuation of our previous study \cite{mast}. In the paper \cite{mast}, the authors focused on linear
partial differential equations in $\mathbb{C}^{2}$. They have constructed
local holomorphic solutions with a careful study of their asymptotic behaviour near the singular locus of the initial data.
These initial data were chosen to be polynomial in $t$,$z$ and a function $u(t)$ satisfying some nonlinear differential equation of
first order on some punctured disc $D(t_{0},r) \setminus \{ t_{0} \} \subset \mathbb{C}$ and owning an isolated singularity at
$t_{0}$ which is either a pole or an algebraic branch point according to a result of P. Painlev\'e.
Inspired by the classical {\it tanh method} introduced in \cite{malf}, they have considered formal series solutions
of the form
\begin{equation}
u(t,z) = \sum_{l \geq 0} u_{l}(t,z) (u(t))^{l} \label{sol_PDE_th}
\end{equation}
where $u_{l}$ are holomorphic functions on $D(t_{0},r) \times D$ where $D \subset \mathbb{C}$ is a small disc centered at 0. They
have given suitable conditions for these series to be well defined and holomorphic for $t$ in a sector $S$ with vertex $t_{0}$ and
moreover as $t$ tends to $t_{0}$ the solutions $u(t,z)$ are shown to carry at most exponential bounds estimates of the form
$C\exp(M|t-t_{0}|^{-\mu})$ for some constants $C,M,\mu>0$.

In this work, the coefficients $a_{m,k}(t,z,w)$ are constructed as polynomials in some function $X(t,z)$ with holomorphic coefficients
in $(t,z,w)$, where $X(t,z)$ is now assumed to solve some nonlinear partial differential equation of first order and is asked to be
holomorphic on a domain $D(0,r)^{2} \setminus \Theta$ and to be singular along the analytic variety $\Theta$. For some specific
choice of $X(t,z)$, one can build the coefficients $a_{m,k}(t,z,w)$ for instance to be some rational functions of $(t,z)$
(see Example 1 of Section 2.1).

In our setting, one cannot achieve the goal only dealing with formal expansions involving the function $X(t,z)$ like (\ref{sol_PDE_th}) since the
derivatives of $X(t,z)$ with respect to $t$ or $z$ cannot be expressed only in term of $X(t,z)$. In order to get suitable recursion
formulas, it turns out that we need to deal with series expansions that take into account all the derivatives of $X(t,z)$ with respect
to $z$. For this reason, the construction of the solutions will follow the one introduced in a recent
work of H. Tahara and will involve Banach spaces of holomorphic functions with infinitely many variables.

In the paper \cite{ta}, H. Tahara introduced a new equivalence problem connecting two given nonlinear partial differential equations of
first order in the complex domain. He showed that the equivalence maps
have to satisfy so called coupling equations which are nonlinear partial differential equations of first order but
with infinitely many variables. It is worthwhile saying that within the framework of mathematical physics, spaces of functions of infinitely
many variables play a fundamental role in the study of nonlinear integrable partial differential equations known as solitons equations
as described in the theory of M. Sato. See \cite{ohsatato} for an introduction.

The layout of the paper is a follows. In a first step described in Section 2.2, we construct formal series of the form
\begin{equation}
 U(t,z,w)=\sum_{\alpha \geq 0} \phi_{\alpha}(t,z,(\frac{\partial_{z}^{h}X(t,z)}{h! \nu^{h}})_{0 \leq h \leq \alpha})
\frac{w^{\alpha}}{\alpha!}, \label{U_defin_intro}
\end{equation}
solutions of some auxiliary non-homogeneous integro-differential equation (\ref{ID_U}) with polynomial coefficients in $X(t,z)$. The
coefficients $\phi_{\alpha}$, $\alpha \geq 0$, are holomorphic functions on some polydisc in $\mathbb{C}^{\alpha+3}$ that satisfy
some differential recursion (Proposition 1).

In Section 2.3, we establish a sequence of inequalities for the modulus of the differentials of arbitrary order of the functions
$\phi_{\alpha}$ denoted $\varphi_{\alpha,n_{0},n_{1},(l_{h})_{0 \leq h \leq \alpha}}$ for all non-negative integers
$\alpha,n_{0},n_{1},l_{h}$ with $0 \leq h \leq \alpha$ (Proposition 2). In the next section, we construct a sequence of coefficients
$\psi_{\alpha,n_{0},n_{1},(l_{h})_{0 \leq h \leq \alpha}}$ which is larger than the latter sequence
$$ \varphi_{\alpha,n_{0},n_{1},(l_{h})_{0 \leq h \leq \alpha}} \leq \psi_{\alpha,n_{0},n_{1},(l_{h})_{0 \leq h \leq \alpha}} $$
for any non-negative integers $\alpha,n_{0},n_{1},l_{h}$ with $0 \leq h \leq \alpha$ and whose generating formal series satisfies some
integro-differential functional equation (\ref{functional_eq_defin_psi}) that involves differential operators with infinitely many
variables (Propositions 3 and 4). The idea of considering recursions over the complete family of derivatives and the use of majorant series
which lead to auxiliary Cauchy problems were already applied in former papers by the authors of this work, see
\cite{lamasa}, \cite{ma1}, \cite{ma2}, \cite{ma3}, \cite{mast}.

In Section 3, we solve the functional equation (\ref{functional_eq_defin_psi}) by applying a fixed point argument in some Banach
space of formal series with infinitely many variables (Proposition 10). The definition of these Banach spaces (Definition 2) is inspired
from formal series spaces introduced in our previous work \cite{mast}. The core of the proof is based on continuity properties of linear
integro-differential operators in infinitely many variables explained in Section 3.1 and constitutes the most technical part of the paper.

Finally, in Section 4, we prove the main result of our work. Namely, we construct analytic functions $Y(t,z,w)$, solutions of
(\ref{lin_pde_Y_intro}) for the prescribed initial data, defined on sets $K \times D(0,\bar{w})$ for any compact set
$K \subset D(0,r)^{2} \setminus \Theta$ with precise bounds of exponential type in term of the maximum value of $|X(t,z)|$ over $K$
(Theorem 1). The proof puts together all the constructions performed in the previous sections. More precisely, for some specific choice
of the non-homogeneous term in the equation (\ref{ID_U}), a formal solution (\ref{U_defin_intro}) of (\ref{ID_U}) gives rise to a
formal solution
$Y(t,z,w)$ of (\ref{lin_pde_Y_intro}) with the given initial data that can be written as the sum of the integral $\partial_{w}^{-S}U(t,z,w)$
and a polynomial in $w$ having the initial data $\varphi_{j}$ as coefficients. Owing to the fact that the generating series
of the sequence $\psi_{\alpha,n_{0},n_{1},(l_{h})_{0 \leq h \leq \alpha}}$, solution of (\ref{functional_eq_defin_psi}), belongs
to the Banach spaces mentioned above, we get estimates for the holomorphic functions $\phi_{\alpha}$ with precise bounds
of exponential type in term of the radii of the polydiscs where they are defined, see (\ref{sup_phi_alpha_v_u<exp_rho}). As a result,
the formal solution $U(t,z,w)$ is actually
convergent for $w$ near the origin and for $(t,z)$ belonging to any compact set of $D(0,r) \setminus \Theta$. Moreover,
exponential bounds are achieved, see (\ref{sup_U_tzw<exp_rho}). The same properties then hold for $Y(t,z,w)$.

\section{Formal series solutions of linear integro-differential equations}
\subsection{Some nonlinear partial differential equation}
We consider the following nonlinear partial differential equation
\begin{equation}
\partial_{t}X(t,z) = a(t,z)\partial_{z}X(t,z) + \sum_{p=0}^{d} a_{p}(t,z)X^{p}(t,z) \ \ , \ \ X(0,z)=f(z) \label{PDE_nl}
\end{equation}
where $d \geq 2$ is some integer, the coefficients $a(t,z)$, $a_{p}(t,z)$ are holomorphic functions on some polydisc
$D(0,R')^{2} \subset \mathbb{C}^{2}$ such that $a_{d}(t,z)$ is not identically equal to zero on $D(0,R')^{2}$ and the initial data
$f(z)$ is holomorphic on $D(0,R')^{2}$. Notice that the problem (\ref{PDE_nl}) can be solved by using the classical method of
characteristics which is described in some classical textbooks like \cite{de}, p. 118 or \cite{ev}, p. 100.

We make the assumption that (\ref{PDE_nl}) has a holomorphic solution $X(t,z)$ on $D(0,R')^{2} \setminus \Theta$ where
$\Theta$ is some \emph{analytic variety} of $D(0,R')^{2}$ (i.e. for any point $M \in D(0,R')^{2}$, there exists a neighborhood
$\mathcal{U}$ of $M$ in $D(0,R')^{2}$ such that $\mathcal{U} \cap \Theta$ is the common zero locus of a finite set of holomorphic
functions $\{f_{1},\ldots,f_{m}\}$ on $\mathcal{U}$ for some integer $m \geq 1$).\medskip

\noindent {\bf Example 1:} The solution of the problem
$$ \partial_{t}X(t,z) = \partial_{z}X(t,z) + X^{2}(t,z) \ \ , \ \ X(0,z)=f(z) $$
where $f(z)$ is some polynomial on $\mathbb{C}$, writes
$$ X(t,z) = \frac{f(t+z)}{1 - tf(t+z)}.$$
Therefore, $X(t,z)$ is holomorphic on $\mathbb{C}^{2} \setminus \Theta$ where
$\Theta = \{(t,z) \in \mathbb{C}^{2} / 1 - tf(t+z)=0\}$ is an algebraic variety.

\noindent {\bf Example 2:} The solution of the problem
$$ \partial_{t}X(t,z) = z\partial_{z}X(t,z) + X^{2}(t,z) \ \ , \ \ X(0,z)=f(z)$$
where $f(z)$ is an holomorphic function on $\mathbb{C}$, writes
$$ X(t,z) = \frac{f(\exp(t)z)}{1 - tf(\exp(t)z)}.$$
The solution $X(t,z)$ is holomorphic on $\mathbb{C}^{2} \setminus \Theta$ where
the singular variety $\Theta$ is the zero locus of one analytic function $\Theta = \{(t,z) \in \mathbb{C}^{2} / 1 - tf(\exp(t)z)=0 \}$.

\subsection{Composition series}

Let $X$ be as in the previous subsection. In the following, we choose a compact subset $K_{0}$ with non-empty interior of
$D(0,R)^{2} \setminus \Theta$ for some $R<R'$ and we consider a real number $\rho>1$ such that
$$
 \sup_{(t,z) \in K_{0}} |X(t,z)| \leq \rho/2.
$$
Let $K \varsubsetneq K_{0}$ be a compact set with non-empty interior $\mathrm{Int}(K)$. From the Cauchy formula, there exists a real
number $\nu>0$ such that
\begin{equation}
\sup_{(t,z) \in \mathrm{Int}(K)} \frac{|\partial_{z}^{h}X(t,z)|}{h! \nu^{h}} \leq \rho/2 \label{sup_diff_z_X<rho}
\end{equation}
for all integers $h \geq 0$. For all integers $\alpha \geq 0$, we denote $I(\alpha) = \{0,\ldots,\alpha \}$. We consider a sequence of
functions $\phi_{\alpha}(v_{0},v_{1},(u_{h})_{h \in I(\alpha)})$ which are holomorphic and bounded 
on the polydisc $D(0,R)^{2} \Pi_{h \in I(\alpha)} D(0,\rho)$, for all $\alpha \geq 0$.

We define the formal series in the $w$ variable,
\begin{equation}
U(t,z,w) = \sum_{\alpha \geq 0} \phi_{\alpha}(t,z,(\frac{\partial_{z}^{h}X(t,z)}{h! \nu^{h}})_{h \in I(\alpha)})
\frac{{w}^{\alpha}}{\alpha!}. \label{U_defin}
\end{equation}
For all $\alpha \geq 0$, we consider a holomorphic and bounded function
$\tilde{\omega}_{\alpha}(v_{0},v_{1},(u_{h})_{h \in I(\alpha)})$ on the product $D(0,R')^{2} \Pi_{h \in I(\alpha)} D(0,\rho)$.
We define the formal series
\begin{equation}
\tilde{\omega}(t,z,w) = \sum_{\alpha \geq 0} \tilde{\omega}_{\alpha}(t,z,(\frac{\partial_{z}^{h}X(t,z)}{h! \nu^{h}})_{h \in I(\alpha)})
\frac{w^{\alpha}}{\alpha!}. \label{tilde_omega_defin}
\end{equation}
Let $\mathcal{S}$ be a finite subset of $\mathbb{N}$ and let $S \geq 1$ be an integer which satisfies the property that
\begin{equation}
S > k 
\end{equation}
for all $k \in \mathcal{S}$. For all $k \in \mathcal{S}$, $m=1,2,3$, we consider holomorphic functions
\begin{equation}
b_{m,k}(t,z,u_{0},w) = \sum_{\alpha \geq 0} b_{m,k,\alpha}(t,z,u_{0}) \frac{w^{\alpha}}{\alpha !} \label{b_mk_t_z_u0_w_defin}
\end{equation}
on $D(0,R')^{2} \times \mathbb{C} \times D(0,\bar{w})$, for some $\bar{w}>0$, which are moreover polynomial with respect to $u_{0}$
of degree $d_{m,k} \geq 0$.

\begin{prop} Assume that the sequence of functions $(\phi_{\alpha})_{\alpha \geq 0}$ satisfies the following recursion
\begin{multline}
\frac{\phi_{\alpha}(v_{0},v_{1},(u_{h})_{h \in I(\alpha)})}{\alpha!} = \sum_{k \in \mathcal{S}}
\sum_{\alpha_{1}+\alpha_{2}=\alpha,\alpha_{2} \geq S-k}
\frac{b_{1,k,\alpha_{1}}(v_{0},v_{1},u_{0})}{\alpha_{1}!}\\
\times \left( \frac{\partial_{v_0}\phi_{\alpha_{2}+k-S}(v_{0},v_{1},(u_{h})_{h \in I(\alpha_{2}+k-S)})}{\alpha_{2}!}
+ \sum_{j \in I(\alpha_{2}+k-S)} ( \sum_{l_{1}+l_{2}=j} \frac{\partial_{v_1}^{l_1}a(v_{0},v_{1})}{l_{1}!\nu^{l_1}}(l_{2}+1)\nu
u_{l_{2}+1} \right. \\
\left. + \sum_{p=0}^{d} \sum_{j_{0}+\ldots+j_{p}=j} \frac{\partial_{v_1}^{j_0}a_{p}(v_{0},v_{1})}{j_{0}!\nu^{j_0}}
\Pi_{l=1}^{p} u_{j_{l}}) \frac{\partial_{u_j}\phi_{\alpha_{2}+k-S}(v_{0},v_{1},(u_{h})_{h \in I(\alpha_{2}+k-S)})}{\alpha_{2}!} \right)\\
+ \sum_{k \in \mathcal{S}} \sum_{\alpha_{1}+\alpha_{2}=\alpha,\alpha_{2} \geq S-k}
\frac{b_{2,k,\alpha_{1}}(v_{0},v_{1},u_{0})}{\alpha_{1}!}\\
\times \left( \frac{\partial_{v_1}\phi_{\alpha_{2}+k-S}(v_{0},v_{1},(u_{h})_{h \in I(\alpha_{2}+k-S)})}{\alpha_{2}!} \right.\\
\left. + \sum_{j \in I(\alpha_{2}+k-S)} (j+1)\nu u_{j+1} \frac{\partial_{u_j}\phi_{\alpha_{2}+k-S}(v_{0},v_{1},(u_{h})_{h \in
I(\alpha_{2}+k-S)})}{\alpha_{2}!}  \right) \\+ \sum_{k \in \mathcal{S}}
\sum_{\alpha_{1}+\alpha_{2}=\alpha,\alpha_{2} \geq S-k}
\frac{b_{3,k,\alpha_{1}}(v_{0},v_{1},u_{0})}{\alpha_{1}!}
\times \frac{\phi_{\alpha_{2}+k-S}(v_{0},v_{1},(u_{h})_{h \in I(\alpha_{2}+k-S)})}{\alpha_{2}!}\\
+ \frac{\tilde{\omega}_{\alpha}(v_{0},v_{1},(u_{h})_{h \in I(\alpha)})}{\alpha!}
\label{recursion_phi_alpha}
\end{multline}
for all $\alpha \geq 0$, all $v_{0},v_{1} \in D(0,R)$, all $u_{h} \in D(0,\rho)$, for $h \in I(\alpha)$. 
Then, the formal series $U(t,z,w)$ satisfies the following integro-differential equation
\begin{multline}
 U(t,z,w) = \sum_{k \in \mathcal{S}} (b_{1,k}(t,z,X(t,z),w)\partial_{t}\partial_{w}^{-S+k}U(t,z,w) \\
+ b_{2,k}(t,z,X(t,z),w)\partial_{z}\partial_{w}^{-S+k}U(t,z,w) + b_{3,k}(t,z,X(t,z),w)\partial_{w}^{-S+k}U(t,z,w))
+ \tilde{\omega}(t,z,w) \label{ID_U}
\end{multline}
for all $(t,z) \in \mathrm{Int}(K)$, where $\partial_{w}^{-m}$ denotes the $m$-iterate of the usual integration operator
$\int_{0}^{w}[.]ds$
\end{prop}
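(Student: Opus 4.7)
The plan is to expand each term on the right-hand side of (\ref{ID_U}) as a formal power series in $w$ whose coefficient of $w^{\alpha}/\alpha!$ is a concrete function of $(t,z,(\partial_z^h X/(h!\nu^h))_{h \in I(\alpha)})$, and then to check that the identity of these coefficients with those of $U(t,z,w)$ is precisely the recursion (\ref{recursion_phi_alpha}).

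First I would compute the iterated integrals. Since $\partial_w^{-m}(w^{\alpha}/\alpha!) = w^{\alpha+m}/(\alpha+m)!$, a shift of index gives
$$ \partial_w^{-S+k} U(t,z,w) = \sum_{\alpha_2 \geq S-k} \phi_{\alpha_2+k-S}\bigl(t,z,(\tfrac{\partial_z^h X}{h!\nu^h})_{h \in I(\alpha_2+k-S)}\bigr) \frac{w^{\alpha_2}}{\alpha_2!}. $$
Next, applying $\partial_t$ and $\partial_z$ term by term requires the chain rule for each $\phi_\beta(t,z,(\partial_z^h X/(h!\nu^h))_h)$. The nontrivial ingredient is to express $\partial_t(\partial_z^h X/(h!\nu^h))$: differentiating the nonlinear equation (\ref{PDE_nl}) $h$ times in $z$ and applying Leibniz yields
$$ \partial_t \partial_z^h X = \sum_{l_1+l_2=h} \binom{h}{l_1}\partial_z^{l_1}a\,\partial_z^{l_2+1}X + \sum_{p=0}^d \sum_{j_0+\cdots+j_p=h} \binom{h}{j_0,\ldots,j_p} \partial_z^{j_0}a_p \prod_{l=1}^p \partial_z^{j_l}X. $$
Dividing by $h!\nu^h$ and substituting $u_\ell := \partial_z^\ell X/(\ell!\nu^\ell)$, the binomial identity $\binom{h}{l_1}/(h!\nu^h)=1/(l_1!\nu^{l_1}\cdot l_2!\nu^{l_2})$ together with the rewriting $\partial_z^{l_2+1}X/(l_2!\nu^{l_2}) = (l_2+1)\nu\, u_{l_2+1}$ produces exactly the bracket attached to $\partial_{u_j}\phi_{\alpha_2+k-S}$ in the first sum of (\ref{recursion_phi_alpha}). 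The analogous and simpler identity $\partial_z(\partial_z^j X/(j!\nu^j)) = (j+1)\nu\, u_{j+1}$ gives the second sum.

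It remains to expand the products $b_{m,k}(t,z,X(t,z),w)\cdot \partial_\bullet \partial_w^{-S+k}U(t,z,w)$ via Cauchy product, using the expansion (\ref{b_mk_t_z_u0_w_defin}). The coefficient of $w^\alpha/\alpha!$ in such a product is
$$ \sum_{\alpha_1+\alpha_2=\alpha,\,\alpha_2 \geq S-k} \frac{\alpha!}{\alpha_1!\,\alpha_2!}\, b_{m,k,\alpha_1}(t,z,X(t,z))\cdot [w^{\alpha_2}/\alpha_2!\text{-coefficient}], $$
so dividing by $\alpha!$ reproduces the prefactor $b_{m,k,\alpha_1}/\alpha_1! \cdot [\cdot]/\alpha_2!$ that appears throughout (\ref{recursion_phi_alpha}). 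Summing over $k \in \mathcal{S}$ and $m=1,2,3$, and adding the coefficient $\tilde{\omega}_\alpha/\alpha!$ coming from (\ref{tilde_omega_defin}), one finds that the coefficient of $w^\alpha/\alpha!$ in the right-hand side of (\ref{ID_U}) is the right-hand side of (\ref{recursion_phi_alpha}). By hypothesis this equals $\phi_\alpha/\alpha!$, which is exactly the coefficient of $w^\alpha/\alpha!$ in $U(t,z,w)$. The two formal series thus coincide.

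The proof is essentially a coefficient-matching argument and involves no convergence issue, since everything is carried out at the level of formal power series in $w$ whose coefficients are evaluated at the specific point $u_h = \partial_z^h X(t,z)/(h!\nu^h)$, valid for $(t,z) \in \mathrm{Int}(K)$ by (\ref{sup_diff_z_X<rho}). The main obstacle is purely combinatorial: one must carefully track the binomial and multinomial factors produced by $\partial_z^h$ applied to (\ref{PDE_nl}) and confirm that, after division by $h!\nu^h$, they collapse precisely into the symmetric form of the bracket in (\ref{recursion_phi_alpha}) involving $(l_2+1)\nu u_{l_2+1}$ and $\prod_{l=1}^p u_{j_l}$. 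Once this bookkeeping is done, the rest is a routine Cauchy product.
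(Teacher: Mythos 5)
Your proposal is correct and follows essentially the same route as the paper: expand $\partial_{w}^{-S+k}U$ by index shift, apply the chain rule to $\partial_{t}$ and $\partial_{z}$ of $\phi_{\alpha_{2}+k-S}$, use equality of mixed partials together with the equation (\ref{PDE_nl}) and the Leibniz formula to convert $\partial_{t}\partial_{z}^{j}X$ into the bracket $A_{j}$, and then match coefficients of $w^{\alpha}/\alpha!$ via the Cauchy product with $b_{m,k}$. The combinatorial bookkeeping you describe (the collapse of the multinomial factors after division by $j!\nu^{j}$) is exactly the computation carried out in the paper's proof.
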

\begin{proof} We have that
\begin{multline*}
b_{3,k}(t,z,X(t,z),w)\partial_{w}^{-S+k}U(t,z,w) \\
= \sum_{\alpha \geq 0}
(\sum_{\alpha_{1}+\alpha_{2}=\alpha, \alpha_{2} \geq S-k} \alpha! \frac{b_{3,k,\alpha_{1}}(t,z,X(t,z))}{\alpha_{1}!}
\frac{\phi_{\alpha_{2}+k-S}(t,z,(\frac{\partial_{z}^{h}X(t,z)}{h! \nu^{h}})_{h \in I(\alpha_{2}+k -S)})}{\alpha_{2}!})
\frac{w^{\alpha}}{\alpha!}
\end{multline*}
and we also see that
\begin{multline*}
b_{2,k}(t,z,X(t,z),w)\partial_{z}\partial_{w}^{-S+k}U(t,z,w) \\
= \sum_{\alpha \geq 0}
(\sum_{\alpha_{1}+\alpha_{2}=\alpha, \alpha_{2} \geq S-k} \alpha! \frac{b_{2,k,\alpha_{1}}(t,z,X(t,z))}{\alpha_{1}!}
\frac{\partial_{z}(\phi_{\alpha_{2}+k-S}(t,z,(\frac{\partial_{z}^{h}X(t,z)}{h! \nu^{h}})_{h \in I(\alpha_{2}+k -S)}))}{\alpha_{2}!})
\frac{w^{\alpha}}{\alpha!}
\end{multline*}
with
\begin{multline*}
\partial_{z}(\phi_{\alpha_{2}+k-S}(t,z,(\frac{\partial_{z}^{h}X(t,z)}{h! \nu^{h}})_{h \in I(\alpha_{2}+k -S)}))=
(\partial_{v_1}\phi_{\alpha_{2}+k -S})(t,z,(\frac{\partial_{z}^{h}X(t,z)}{h! \nu^{h}})_{h \in I(\alpha_{2}+k -S)}) \\
+ \sum_{j \in I(\alpha_{2}+k -S)} (j+1)\nu \frac{\partial_{z}^{j+1}X(t,z)}{(j+1)!\nu^{j+1}}
(\partial_{u_j}\phi_{\alpha_{2}+k - S})(t,z,(\frac{\partial_{z}^{h}X(t,z)}{h! \nu^{h}})_{h \in I(\alpha_{2}+k -S)}),
\end{multline*}
for all $(t,z) \in \mathrm{Int}(K)$. We also get that
\begin{multline*}
b_{1,k}(t,z,X(t,z),w)\partial_{t}\partial_{w}^{-S+k}U(t,z,w)\\
= \sum_{\alpha \geq 0} (\sum_{\alpha_{1}+\alpha_{2}=\alpha,\alpha_{2}\geq S-k}
\alpha!  \frac{b_{1,k,\alpha_{1}}(t,z,X(t,z))}{\alpha_{1}!}
\frac{\partial_{t}(\phi_{\alpha_{2}+k-S}(t,z,(\frac{\partial_{z}^{h}X(t,z)}{h! \nu^{h}})_{h \in I(\alpha_{2}+k -S)})}{\alpha_{2}!})
\frac{w^{\alpha}}{\alpha!}
\end{multline*}
with
\begin{multline*}
\partial_{t}(\phi_{\alpha_{2}+k-S}(t,z,(\frac{\partial_{z}^{h}X(t,z)}{h! \nu^{h}})_{h \in I(\alpha_{2}+k -S)})=
(\partial_{v_0}\phi_{\alpha_{2}+k-S})(t,z,(\frac{\partial_{z}^{h}X(t,z)}{h! \nu^{h}})_{h \in I(\alpha_{2}+k-S)}) \\
+ \sum_{j \in I(\alpha_{2}+k -S)} \frac{\partial_{t}\partial_{z}^{j}X(t,z)}{j!\nu^{j}}
(\partial_{u_j}\phi_{\alpha_{2}+k-S})(t,z,(\frac{\partial_{z}^{h}X(t,z)}{h! \nu^{h}})_{h \in I(\alpha_{2}+k-S)}),
\end{multline*}
for all $(t,z) \in \mathrm{Int}(K)$. Now, from (\ref{PDE_nl}) and the classical Schwarz's result on equality of mixed partial
derivatives, we get that
$$
\frac{\partial_{t}\partial_{z}^{j}X(t,z)}{j!\nu^{j}} = \frac{\partial_{z}^{j}\partial_{t}X(t,z)}{j!\nu^{j}} =
\frac{1}{j!\nu^{j}}\partial_{z}^{j}(a(t,z)\partial_{z}X(t,z) + \sum_{p=0}^{d} a_{p}(t,z)X^{p}(t,z))
$$
and from the Leibniz formula, we can write
$$ \frac{1}{j!\nu^{j}}\partial_{z}^{j}(a(t,z)\partial_{z}X(t,z)) =
\sum_{l_{1}+l_{2}=j} \frac{\partial_{z}^{l_1}a(t,z)}{l_{1}!\nu^{l_1}}(l_{2}+1)\nu
\frac{\partial_{z}^{l_{2}+1}X(t,z)}{(l_{2}+1)!\nu^{l_{2}+1}} $$ 
and
$$ \frac{1}{j!\nu^{j}}\partial_{z}^{j}(a_{p}(t,z)X^{p}(t,z)) = \sum_{j_{0}+\ldots+j_{p} = j}
\frac{\partial_{z}^{j_0}a_{p}(t,z)}{j_{0}!\nu^{j_0}}\Pi_{l=1}^{p}\frac{\partial_{z}^{j_l}X(t,z)}{j_{l}!\nu^{j_l}},
$$
for all $(t,z) \in \mathrm{Int}(K)$. Finally, gathering all the equalities above and using the recursion
(\ref{recursion_phi_alpha}), one gets the integro-differential equation (\ref{ID_U}).
\end{proof}

\subsection{Recursion for the derivatives of the functions $\phi_{\alpha}$, $\alpha \geq 0$}

We consider a sequence of functions $\phi_{\alpha}(v_{0},v_{1},(u_{h})_{h \in I(\alpha)})$, $\alpha \geq 0$, which are holomorphic
and bounded on some polydisc $D(0,R)^{2} \Pi_{h \in I(\alpha)} D(0,\rho)$
for some real numbers $R>0$ and $\rho>1$ and which satisfy the equalities
(\ref{recursion_phi_alpha}). We introduce the sequences
\begin{equation}
\varphi_{\alpha,n_{0},n_{1},(l_{h})_{h \in I(\alpha)}} = \sup_{|v_{0}| < R,|v_{1}| < R, |u_{h}| < \rho, h \in I(\alpha)}
|\partial_{v_0}^{n_0}\partial_{v_1}^{n_1} \Pi_{h \in I(\alpha)}\partial_{u_h}^{l_h}\phi_{\alpha}(v_{0},v_{1},(u_{h})_{h \in I(\alpha)})|
\label{varphi_alpha_n_l_defin}
\end{equation}
for all $n_{0},n_{1} \geq 0$, all $l_{h} \geq 0$, $h \in I(\alpha)$, for all $\alpha \geq 0$. We define also the following sequences
\begin{multline}
b_{m,k,\alpha,n_{0},n_{1},l_{0}} = \sup_{|v_{0}| < R,|v_{1}| < R, |u_{0}| < \rho}
|\partial_{v_0}^{n_{0}}\partial_{v_1}^{n_{1}}\partial_{u_0}^{l_{0}}b_{m,k,\alpha}(v_{0},v_{1},u_{0})|,\\
\tilde{\omega}_{\alpha,n_{0},n_{1},(l_{h})_{h \in I(\alpha)}} = \sup_{|v_{0}| < R,|v_{1}| < R, |u_{h}| < \rho, h \in I(\alpha)}
|\partial_{v_0}^{n_0}\partial_{v_1}^{n_1} \Pi_{h \in I(\alpha)}\partial_{u_h}^{l_h}
\tilde{\omega}_{\alpha}(v_{0},v_{1},(u_{h})_{h \in I(\alpha)})|
\label{b_mk_alpha_n_l_and_tilde_omega_alpha_n_l_defin}
\end{multline}
for $m=1,2,3$ and $k \in \mathcal{S}$. We put
\begin{multline}
A_{j}(v_{0},v_{1},(u_{h})_{h \in I(\alpha+1)})
=  \sum_{l_{1}+l_{2}=j}
\frac{\partial_{v_1}^{l_1}a(v_{0},v_{1})}{l_{1}!\nu^{l_1}}(l_{2}+1)\nu
u_{l_{2}+1} \\
 + \sum_{p=0}^{d} \sum_{j_{0}+\ldots+j_{p}=j} \frac{\partial_{v_1}^{j_0}a_{p}(v_{0},v_{1})}{j_{0}!\nu^{j_0}}
\Pi_{l=1}^{p} u_{j_{l}} \label{defin_A_j}
\end{multline}
and
\begin{equation}
B_{j}(v_{0},v_{1},(u_{h})_{h \in I(\alpha+1)}) = (j+1)\nu u_{j+1} \label{defin_B_j}
\end{equation}
for all $j \in I(\alpha)$, $v_{0},v_{1} \in D(0,R')$ and $u_{h} \in \mathbb{C}$, $h \in I(\alpha)$. We define the sequences
$$ A_{j,\alpha,n_{0},n_{1},(l_{h})_{h \in I(\alpha+1)}} =  \sup_{|v_{0}| < R,|v_{1}| < R, |u_{h}| < \rho, h \in I(\alpha)}
|\partial_{v_0}^{n_0}\partial_{v_1}^{n_1} \Pi_{h \in I(\alpha)}\partial_{u_h}^{l_h}A_{j}(v_{0},v_{1},(u_{h})_{h \in I(\alpha+1)})| $$
and
$$ B_{j,\alpha,n_{0},n_{1},(l_{h})_{h \in I(\alpha+1)}} =  \sup_{|v_{0}| < R,|v_{1}| < R, |u_{h}| < \rho, h \in I(\alpha)}
|\partial_{v_0}^{n_0}\partial_{v_1}^{n_1} \Pi_{h \in I(\alpha)}\partial_{u_h}^{l_h}B_{j}(v_{0},v_{1},(u_{h})_{h \in I(\alpha+1)})| $$
for all $j \in I(\alpha)$, all $n_{0},n_{1} \geq 0$, all $l_{h} \geq 0$, $h \in I(\alpha+1)$, for all $\alpha \geq 0$. We also recall the
definition of the Kronecker symbol $\delta_{0,l}$ which is equal to 0 if $l \neq 0$ and
equal to 1 if $l=0$.
\begin{prop} The sequence $\varphi_{\alpha,n_{0},n_{1},(l_{h})_{h \in I(\alpha)}}$ satisfies the following inequality:
\begin{multline}
\frac{\varphi_{\alpha,n_{0},n_{1},(l_{h})_{h \in I(\alpha)}}}{\alpha!} \leq  \sum_{k \in \mathcal{S}} \sum_{\scriptscriptstyle{
\stackrel{\alpha_{1}+\alpha_{2} = \alpha}{\alpha_{2} \geq S-k}}}
\sum_{\scriptscriptstyle{\stackrel{n_{0,1}+n_{0,2}=n_{0},n_{1,1}+n_{1,2}=n_{1}}{l_{h,1}+l_{h,2}=l_{h},h \in I(\alpha)}}}
\frac{n_{0}!n_{1}!\Pi_{h \in I(\alpha)}l_{h}!}{n_{0,1}!n_{0,2}!n_{1,1}!n_{1,2}!\Pi_{h \in I(\alpha)}l_{h,1}!l_{h,2}!}\\
\frac{b_{1,k,\alpha_{1},n_{0,1},n_{1,1},l_{0,1}}}{\alpha_{1}!}\Pi_{h \in I(\alpha)\setminus\{0 \}} \delta_{0,l_{h,1}} \times
\frac{\varphi_{\alpha_{2}+k-S,n_{0,2}+1,n_{1,2},(l_{h,2})_{h \in I(\alpha_{2}+k-S)}}}{\alpha_{2}!}
\Pi_{h \in I(\alpha) \setminus I(\alpha_{2}+k-S)} \delta_{0,l_{h,2}}\\
+ \sum_{j \in I(\alpha_{2}+k-S)}\sum_{\scriptscriptstyle{\stackrel{n_{0,1}+n_{0,2}+n_{0,3}=n_{0},
n_{1,1}+n_{1,2}+n_{1,3}=n_{1}}{l_{h,1}+l_{h,2}+l_{h,3}=l_{h},h \in I(\alpha)}}}
\frac{n_{0}!n_{1}!\Pi_{h \in I(\alpha)}l_{h}!}{n_{0,1}!n_{0,2}!n_{0,3}!n_{1,1}!n_{1,2}!n_{1,3}!
\Pi_{h \in I(\alpha)}l_{h,1}!l_{h,2}!l_{h,3}!}\\
\frac{b_{1,k,\alpha_{1},n_{0,1},n_{1,1},l_{0,1}}}{\alpha_{1}!}\Pi_{h \in I(\alpha)\setminus\{0 \}} \delta_{0,l_{h,1}} \times
A_{j,\alpha_{2}+k-S+1,n_{0,2},n_{1,2},(l_{h,2})_{h \in I(\alpha_{2}+k-S+1)}}\\
\times \Pi_{h \in I(\alpha) \setminus I(\alpha_{2}+k-S+1)} \delta_{0,l_{h,2}} \times
\frac{\varphi_{\alpha_{2}+k-S,n_{0,3},n_{1,3},(l_{h,3})_{h \in I(\alpha_{2}+k-S),h \neq j},l_{j,3}+1}}{\alpha_{2}!} \times
\Pi_{h \in I(\alpha) \setminus I(\alpha_{2}+k-S)} \delta_{0,l_{h,3}}\\
+ \sum_{k \in \mathcal{S}} \sum_{\scriptscriptstyle{\stackrel{\alpha_{1}+\alpha_{2} = \alpha}{\alpha_{2} \geq S-k}}}
\sum_{\scriptscriptstyle{\stackrel{n_{0,1}+n_{0,2}=n_{0},n_{1,1}+n_{1,2}=n_{1}}{l_{h,1}+l_{h,2}=l_{h},h \in I(\alpha)}}}
\frac{n_{0}!n_{1}!\Pi_{h \in I(\alpha)}l_{h}!}{n_{0,1}!n_{0,2}!n_{1,1}!n_{1,2}!\Pi_{h \in I(\alpha)}l_{h,1}!l_{h,2}!}\\
\frac{b_{2,k,\alpha_{1},n_{0,1},n_{1,1},l_{0,1}}}{\alpha_{1}!}\Pi_{h \in I(\alpha)\setminus\{0 \}} \delta_{0,l_{h,1}} \times
\frac{\varphi_{\alpha_{2}+k-S,n_{0,2},n_{1,2}+1,(l_{h,2})_{h \in I(\alpha_{2}+k-S)}}}{\alpha_{2}!}
\Pi_{h \in I(\alpha)\setminus I(\alpha_{2}+k-S) } \delta_{0,l_{h,2}}\\
+ \sum_{j \in I(\alpha_{2}+k-S)}\sum_{\scriptscriptstyle{\stackrel{n_{0,1}+n_{0,2}+n_{0,3}=n_{0},
n_{1,1}+n_{1,2}+n_{1,3}=n_{1}}{l_{h,1}+l_{h,2}+l_{h,3}=l_{h},h \in I(\alpha)}}}
\frac{n_{0}!n_{1}!\Pi_{h \in I(\alpha)}l_{h}!}{n_{0,1}!n_{0,2}!n_{0,3}!n_{1,1}!n_{1,2}!n_{1,3}!
\Pi_{h \in I(\alpha)}l_{h,1}!l_{h,2}!l_{h,3}!}\\
\frac{b_{2,k,\alpha_{1},n_{0,1},n_{1,1},l_{0,1}}}{\alpha_{1}!}\Pi_{h \in I(\alpha)\setminus\{0 \}} \delta_{0,l_{h,1}} \times
B_{j,\alpha_{2}+k-S+1,n_{0,2},n_{1,2},(l_{h,2})_{h \in I(\alpha_{2}+k-S+1)}}\\
\times \Pi_{h \in I(\alpha) \setminus I(\alpha_{2}+k-S+1)} \delta_{0,l_{h,2}} \times
\frac{\varphi_{\alpha_{2}+k-S,n_{0,3},n_{1,3},(l_{h,3})_{h \in I(\alpha_{2}+k-S),h \neq j},l_{j,3}+1}}{\alpha_{2}!} \times
\Pi_{h \in I(\alpha) \setminus I(\alpha_{2}+k-S)} \delta_{0,l_{h,3}} \\
+ \sum_{k \in \mathcal{S}} \sum_{\scriptscriptstyle{
\stackrel{\alpha_{1}+\alpha_{2} = \alpha}{\alpha_{2} \geq S-k}}}
\sum_{\scriptscriptstyle{\stackrel{n_{0,1}+n_{0,2}=n_{0},n_{1,1}+n_{1,2}=n_{1}}{l_{h,1}+l_{h,2}=l_{h},h \in I(\alpha)}}}
\frac{n_{0}!n_{1}!\Pi_{h \in I(\alpha)}l_{h}!}{n_{0,1}!n_{0,2}!n_{1,1}!n_{1,2}!\Pi_{h \in I(\alpha)}l_{h,1}!l_{h,2}!}\\
\frac{b_{3,k,\alpha_{1},n_{0,1},n_{1,1},l_{0,1}}}{\alpha_{1}!}\Pi_{h \in I(\alpha)\setminus\{0 \}} \delta_{0,l_{h,1}} \times
\frac{\varphi_{\alpha_{2}+k-S,n_{0,2},n_{1,2},(l_{h,2})_{h \in I(\alpha_{2}+k-S)}}}{\alpha_{2}!}
\Pi_{h \in I(\alpha) \setminus I(\alpha_{2}+k-S)} \delta_{0,l_{h,2}}\\
+ \frac{\tilde{\omega}_{\alpha,n_{0},n_{1},(l_{h})_{h \in I(\alpha)}}}{\alpha!} \label{ineq_recursion_varphi_alpha_n_l}
\end{multline}
for all $\alpha \geq 0$, all $n_{0},n_{1},l_{h} \geq 0$ for $h \in I(\alpha)$.
\end{prop}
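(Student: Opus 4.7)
The plan is to differentiate the recursion (\ref{recursion_phi_alpha}) term by term, apply Leibniz's rule in all the variables $v_0$, $v_1$, and $u_h$ for $h \in I(\alpha)$, and then pass to suprema on the appropriate polydisc to produce the claimed inequality. Concretely, I fix $\alpha \geq 0$ and multi-indices $n_0, n_1, (l_h)_{h \in I(\alpha)}$, and apply the operator $\partial_{v_0}^{n_0}\partial_{v_1}^{n_1}\prod_{h \in I(\alpha)}\partial_{u_h}^{l_h}$ to both sides of (\ref{recursion_phi_alpha}). On the left we get $\partial_{v_0}^{n_0}\partial_{v_1}^{n_1}\prod_h \partial_{u_h}^{l_h}\phi_\alpha / \alpha!$, whose modulus has supremum $\varphi_{\alpha,n_0,n_1,(l_h)}/\alpha!$ over the polydisc $\{|v_0|<R,|v_1|<R,|u_h|<\rho\}$.

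On the right hand side, I would process each of the three families of terms ($m=1,2,3$) in parallel. For the $b_{3,k,\alpha_1}\,\phi_{\alpha_2+k-S}/(\alpha_1!\alpha_2!)$ terms the multivariate Leibniz rule produces the multinomial factor $\frac{n_0!n_1!\prod_h l_h!}{n_{0,1}!n_{0,2}!n_{1,1}!n_{1,2}!\prod_h l_{h,1}! l_{h,2}!}$, with the crucial observation that since $b_{3,k,\alpha_1}(v_0,v_1,u_0)$ depends only on $u_0$, any application of $\partial_{u_h}$ with $h \neq 0$ annihilates it; this produces the factor $\prod_{h \in I(\alpha) \setminus \{0\}} \delta_{0,l_{h,1}}$. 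Similarly, since $\phi_{\alpha_2+k-S}$ depends only on $(u_h)_{h \in I(\alpha_2+k-S)}$, a derivative in a variable $u_h$ with $h > \alpha_2+k-S$ kills it, producing $\prod_{h \in I(\alpha) \setminus I(\alpha_2+k-S)} \delta_{0,l_{h,2}}$. Taking moduli, using the triangle inequality and passing to the suprema gives exactly the $b_{3,k,\alpha_1,n_{0,1},n_{1,1},l_{0,1}} \, \varphi_{\alpha_2+k-S,n_{0,2},n_{1,2},(l_{h,2})}$ block of (\ref{ineq_recursion_varphi_alpha_n_l}).

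The $b_{1,k}$ block is handled in the same way, except that now the factor multiplying $b_{1,k,\alpha_1}$ is a sum of $\partial_{v_0}\phi_{\alpha_2+k-S}$ (giving the first piece, where $n_{0,2}$ is shifted by $1$) plus $\sum_{j \in I(\alpha_2+k-S)} A_j(v_0,v_1,(u_h)) \, \partial_{u_j}\phi_{\alpha_2+k-S}$, where $A_j$ is defined by (\ref{defin_A_j}). For this second piece I apply Leibniz's rule to a triple product $b_{1,k,\alpha_1} \cdot A_j \cdot \partial_{u_j}\phi_{\alpha_2+k-S}$, which yields the triple multinomial $\frac{n_0!n_1!\prod_h l_h!}{n_{0,1}!n_{0,2}!n_{0,3}!\,n_{1,1}!n_{1,2}!n_{1,3}!\prod_h l_{h,1}!l_{h,2}!l_{h,3}!}$. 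Since $A_j$ depends on $(u_h)_{h \in I(\alpha_2+k-S+1)}$, the corresponding $\delta_{0,l_{h,2}}$ factors appear, and the differentiation of $\partial_{u_j}\phi_{\alpha_2+k-S}$ raises the $j$-th exponent from $l_{j,3}$ to $l_{j,3}+1$, which matches the displayed index $\varphi_{\alpha_2+k-S,n_{0,3},n_{1,3},(l_{h,3})_{h \neq j},l_{j,3}+1}$. The $b_{2,k}$ block is handled identically, with $B_j$ from (\ref{defin_B_j}) in place of $A_j$, producing a $\partial_{v_1}$ shift (i.e.\ $n_{1,2}+1$) instead of $\partial_{v_0}$. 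Finally, the non-homogeneous term $\tilde\omega_\alpha/\alpha!$ yields directly $\tilde\omega_{\alpha,n_0,n_1,(l_h)}/\alpha!$.

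The main obstacle is purely combinatorial: keeping track of the splittings of $(n_0,n_1,(l_h))$ under Leibniz's rule into two parts (for double products) or three parts (for triple products), and making sure the Kronecker delta truncations $\delta_{0,l_{h,i}}$ correctly reflect which variables each factor actually depends on. No analytic subtlety is involved beyond the standard Cauchy-type bound $|\partial^\beta f(x)| \leq \sup |f|$ on an open polydisc once derivatives are taken first and the supremum afterwards, which is what the definitions (\ref{varphi_alpha_n_l_defin}), (\ref{b_mk_alpha_n_l_and_tilde_omega_alpha_n_l_defin}) already encode.
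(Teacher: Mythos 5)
Your proposal is correct and follows essentially the same route as the paper: apply $\partial_{v_0}^{n_0}\partial_{v_1}^{n_1}\Pi_{h \in I(\alpha)}\partial_{u_h}^{l_h}$ to the recursion (\ref{recursion_phi_alpha}), expand the double and triple products by the multivariate Leibniz rule, record via Kronecker deltas which variables each factor actually depends on, and then bound by the suprema (\ref{varphi_alpha_n_l_defin}), (\ref{b_mk_alpha_n_l_and_tilde_omega_alpha_n_l_defin}) using the triangle inequality and $\sup|fg|\leq\sup|f|\sup|g|$. This matches the paper's proof of Proposition 2 step for step.
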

\begin{proof} In order to get the inequality (\ref{ineq_recursion_varphi_alpha_n_l}), we apply the differential operator
$\partial_{v_0}^{n_0}\partial_{v_1}^{n_1}\Pi_{h \in I(\alpha)}\partial_{u_h}^{l_h}$ on the left and right handside of the
recursion (\ref{recursion_phi_alpha}) and we use the expansions that are computed below.

>From the Leibniz formula, we deduce that
\begin{multline}
\partial_{v_0}^{n_0}\partial_{v_1}^{n_1} \Pi_{h \in I(\alpha)}\partial_{u_h}^{l_h}(b_{3,k,\alpha_{1}}(v_{0},v_{1},u_{0})
\phi_{\alpha_{2}+k-S}(v_{0},v_{1},(u_{h})_{h \in I(\alpha_{2}+k-S)}))=\\
\sum_{\scriptscriptstyle{\stackrel{n_{0,1}+n_{0,2}=n_{0},n_{1,1}+n_{1,2}=n_{1}}{l_{h,1}+l_{h,2}=l_{h},h \in I(\alpha)}}}
\frac{n_{0}!n_{1}!\Pi_{h \in I(\alpha)}l_{h}!}{n_{0,1}!n_{0,2}!n_{1,1}!n_{1,2}!\Pi_{h \in I(\alpha)}l_{h,1}!l_{h,2}!}
\partial_{v_0}^{n_{0,1}}\partial_{v_1}^{n_{1,1}}\Pi_{h \in I(\alpha)} \partial_{u_h}^{l_{h,1}}(b_{3,k,\alpha_{1}}(v_{0},v_{1},u_{0}))\\
\times
\partial_{v_0}^{n_{0,2}}\partial_{v_1}^{n_{1,2}}\Pi_{h \in I(\alpha)} \partial_{u_h}^{l_{h,2}}
(\phi_{\alpha_{2}+k-S}(v_{0},v_{1},(u_{h})_{h \in I(\alpha_{2}+k-S)}))
\end{multline}
and
\begin{multline}
\partial_{v_0}^{n_0}\partial_{v_1}^{n_1} \Pi_{h \in I(\alpha)}\partial_{u_h}^{l_h}(b_{1,k,\alpha_{1}}(v_{0},v_{1},u_{0})
\partial_{v_0}\phi_{\alpha_{2}+k-S}(v_{0},v_{1},(u_{h})_{h \in I(\alpha_{2}+k-S)}))=\\
\sum_{\scriptscriptstyle{\stackrel{n_{0,1}+n_{0,2}=n_{0},n_{1,1}+n_{1,2}=n_{1}}{l_{h,1}+l_{h,2}=l_{h},h \in I(\alpha)}}}
\frac{n_{0}!n_{1}!\Pi_{h \in I(\alpha)}l_{h}!}{n_{0,1}!n_{0,2}!n_{1,1}!n_{1,2}!\Pi_{h \in I(\alpha)}l_{h,1}!l_{h,2}!}
\partial_{v_0}^{n_{0,1}}\partial_{v_1}^{n_{1,1}}\Pi_{h \in I(\alpha)} \partial_{u_h}^{l_{h,1}}(b_{1,k,\alpha_{1}}(v_{0},v_{1},u_{0}))\\
\times
\partial_{v_0}^{n_{0,2}+1}\partial_{v_1}^{n_{1,2}}\Pi_{h \in I(\alpha)} \partial_{u_h}^{l_{h,2}}
(\phi_{\alpha_{2}+k-S}(v_{0},v_{1},(u_{h})_{h \in I(\alpha_{2}+k-S)})).
\end{multline}
Moreover, we can write
\begin{multline}
 \partial_{v_0}^{n_{0,1}}\partial_{v_1}^{n_{1,1}}\Pi_{h \in I(\alpha)} \partial_{u_h}^{l_{h,1}}(b_{3,k,\alpha_{1}}(v_{0},v_{1},u_{0}))\\
= \partial_{v_0}^{n_{0,1}}\partial_{v_1}^{n_{1,1}}\partial_{u_0}^{l_{0,1}}b_{3,k,\alpha_{1}}(v_{0},v_{1},u_{0}) \times
\Pi_{h \in I(\alpha) \setminus \{ 0 \}} \delta_{0,l_{h,1}} \label{derivative_b3kalpha1}
\end{multline}
with
\begin{multline}
\partial_{v_0}^{n_{0,2}}\partial_{v_1}^{n_{1,2}}\Pi_{h \in I(\alpha)} \partial_{u_h}^{l_{h,2}}
(\phi_{\alpha_{2}+k-S}(v_{0},v_{1},(u_{h})_{h \in I(\alpha_{2}+k-S)}))\\
= \partial_{v_0}^{n_{0,2}}\partial_{v_1}^{n_{1,2}}\Pi_{h \in I(\alpha_{2}+k-S)} \partial_{u_h}^{l_{h,2}}
(\phi_{\alpha_{2}+k-S}(v_{0},v_{1},(u_{h})_{h \in I(\alpha_{2}+k-S)}))\\
\times \Pi_{h \in I(\alpha) \setminus I(\alpha_{2}+k-S)} \delta_{0,l_{h,2}}
\end{multline}
and
\begin{multline}
 \partial_{v_0}^{n_{0,1}}\partial_{v_1}^{n_{1,1}}\Pi_{h \in I(\alpha)} \partial_{u_h}^{l_{h,1}}(b_{1,k,\alpha_{1}}(v_{0},v_{1},u_{0}))\\
= \partial_{v_0}^{n_{0,1}}\partial_{v_1}^{n_{1,1}}\partial_{u_0}^{l_{0,1}}b_{1,k,\alpha_{1}}(v_{0},v_{1},u_{0}) \times
\Pi_{h \in I(\alpha) \setminus \{ 0 \}} \delta_{0,l_{h,1}} \label{derivative_b1kalpha1}
\end{multline}
with
\begin{multline}
\partial_{v_0}^{n_{0,2}+1}\partial_{v_1}^{n_{1,2}}\Pi_{h \in I(\alpha)} \partial_{u_h}^{l_{h,2}}
(\phi_{\alpha_{2}+k-S}(v_{0},v_{1},(u_{h})_{h \in I(\alpha_{2}+k-S)}))\\
= \partial_{v_0}^{n_{0,2}+1}\partial_{v_1}^{n_{1,2}}\Pi_{h \in I(\alpha_{2}+k-S)} \partial_{u_h}^{l_{h,2}}
(\phi_{\alpha_{2}+k-S}(v_{0},v_{1},(u_{h})_{h \in I(\alpha_{2}+k-S)}))\\
\times \Pi_{h \in I(\alpha) \setminus I(\alpha_{2}+k-S)} \delta_{0,l_{h,2}}.
\end{multline}
By construction, we have 
\begin{multline}
A_{j}(v_{0},v_{1},(u_{h})_{h \in I(\alpha_{2}+k-S+1)})
=  \sum_{l_{1}+l_{2}=j}
\frac{\partial_{v_1}^{l_1}a(v_{0},v_{1})}{l_{1}!\nu^{l_1}}(l_{2}+1)\nu
u_{l_{2}+1} \\
 + \sum_{p=0}^{d} \sum_{j_{0}+\ldots+j_{p}=j} \frac{\partial_{v_1}^{j_0}a_{p}(v_{0},v_{1})}{j_{0}!\nu^{j_0}}
\Pi_{l=1}^{p} u_{j_{l}}
\end{multline}
for all $j \in I(\alpha_{2}+k-S)$. Again, by the Leibniz formula, we get that
\begin{multline}
\partial_{v_0}^{n_0}\partial_{v_1}^{n_1} \Pi_{h \in I(\alpha)}\partial_{u_h}^{l_h}(b_{1,k,\alpha_{1}}(v_{0},v_{1},u_{0})
A_{j}(v_{0},v_{1},(u_{h})_{h \in I(\alpha_{2}+k-S+1)}) \\
\times \partial_{u_j}\phi_{\alpha_{2}+k-S}(v_{0},v_{1},(u_{h})_{h \in I(\alpha_{2}+k-S)})) \\
=
\sum_{\scriptscriptstyle{\stackrel{n_{0,1}+n_{0,2}+n_{0,3}=n_{0},
n_{1,1}+n_{1,2}+n_{1,3}=n_{1}}{l_{h,1}+l_{h,2}+l_{h,3}=l_{h},h \in I(\alpha)}}}
\frac{n_{0}!n_{1}!\Pi_{h \in I(\alpha)}l_{h}!}{n_{0,1}!n_{0,2}!n_{0,3}!n_{1,1}!n_{1,2}!n_{1,3}!
\Pi_{h \in I(\alpha)}l_{h,1}!l_{h,2}!l_{h,3}!}\\
\partial_{v_0}^{n_{0,1}}\partial_{v_1}^{n_{1,1}}\Pi_{h \in I(\alpha)} \partial_{u_h}^{l_{h,1}}(b_{1,k,\alpha_{1}}(v_{0},v_{1},u_{0}))\\
\times \partial_{v_0}^{n_{0,2}}\partial_{v_1}^{n_{1,2}}\Pi_{h \in I(\alpha)}
\partial_{u_h}^{l_{h,2}}(A_{j}(v_{0},v_{1},(u_{h})_{h \in I(\alpha_{2}+k-S+1)}))\\
\times \partial_{v_0}^{n_{0,3}}\partial_{v_1}^{n_{1,3}}(\Pi_{h \in I(\alpha),h \neq j}
\partial_{u_h}^{l_{h,3}})\partial_{u_j}^{l_{j,3}+1}\phi_{\alpha_{2}+k-S}(v_{0},v_{1},(u_{h})_{h \in I(\alpha_{2}+k-S)}).
\label{derivative_triple_product_Aj}
\end{multline}
Inside the formula (\ref{derivative_triple_product_Aj}), we can rewrite the relations (\ref{derivative_b1kalpha1}) and
\begin{multline}
 \partial_{v_0}^{n_{0,2}}\partial_{v_1}^{n_{1,2}}\Pi_{h \in I(\alpha)}
\partial_{u_h}^{l_{h,2}}A_{j}(v_{0},v_{1},(u_{h})_{h \in I(\alpha_{2}+k-S+1)}) \\
=
\partial_{v_0}^{n_{0,2}}\partial_{v_1}^{n_{1,2}}\Pi_{h \in I(\alpha_{2}+k-S+1)}
\partial_{u_h}^{l_{h,2}}A_{j}(v_{0},v_{1},(u_{h})_{h \in I(\alpha_{2}+k-S+1)}) \times \Pi_{h \in I(\alpha) \setminus I(\alpha_{2}+k-S+1)}
\delta_{0,l_{h,2}}
\end{multline}
with
\begin{multline}
  \partial_{v_0}^{n_{0,3}}\partial_{v_1}^{n_{1,3}}(\Pi_{h \in I(\alpha),h \neq j}
\partial_{u_h}^{l_{h,3}})\partial_{u_j}^{l_{j,3}+1}\phi_{\alpha_{2}+k-S}(v_{0},v_{1},(u_{h})_{h \in I(\alpha_{2}+k-S)}) \\
=  \partial_{v_0}^{n_{0,3}}\partial_{v_1}^{n_{1,3}}(\Pi_{h \in I(\alpha_{2}+k-S),h \neq j}
\partial_{u_h}^{l_{h,3}})\partial_{u_j}^{l_{j,3}+1}\phi_{\alpha_{2}+k-S}(v_{0},v_{1},(u_{h})_{h \in I(\alpha_{2}+k-S)})\\
\times \Pi_{h \in I(\alpha) \setminus I(\alpha_{2}+k-S)} \delta_{0,l_{h,3}}. \label{derivative_partial_uj_phi_alpha2kS}
\end{multline}
In the same way, one gets the next equalities
\begin{multline}
\partial_{v_0}^{n_0}\partial_{v_1}^{n_1} \Pi_{h \in I(\alpha)}\partial_{u_h}^{l_h}(b_{2,k,\alpha_{1}}(v_{0},v_{1},u_{0})
\partial_{v_1}\phi_{\alpha_{2}+k-S}(v_{0},v_{1},(u_{h})_{h \in I(\alpha_{2}+k-S)}))=\\
\sum_{\scriptscriptstyle{\stackrel{n_{0,1}+n_{0,2}=n_{0},n_{1,1}+n_{1,2}=n_{1}}{l_{h,1}+l_{h,2}=l_{h},h \in I(\alpha)}}}
\frac{n_{0}!n_{1}!\Pi_{h \in I(\alpha)}l_{h}!}{n_{0,1}!n_{0,2}!n_{1,1}!n_{1,2}!\Pi_{h \in I(\alpha)}l_{h,1}!l_{h,2}!}
\partial_{v_0}^{n_{0,1}}\partial_{v_1}^{n_{1,1}}\Pi_{h \in I(\alpha)} \partial_{u_h}^{l_{h,1}}(b_{2,k,\alpha_{1}}(v_{0},v_{1},u_{0}))\\
\times
\partial_{v_0}^{n_{0,2}}\partial_{v_1}^{n_{1,2}+1}\Pi_{h \in I(\alpha)} \partial_{u_h}^{l_{h,2}}
(\phi_{\alpha_{2}+k-S}(v_{0},v_{1},(u_{h})_{h \in I(\alpha_{2}+k-S)}))
\end{multline}
with the factorizations
\begin{multline}
 \partial_{v_0}^{n_{0,1}}\partial_{v_1}^{n_{1,1}}\Pi_{h \in I(\alpha)} \partial_{u_h}^{l_{h,1}}(b_{2,k,\alpha_{1}}(v_{0},v_{1},u_{0}))\\
= \partial_{v_0}^{n_{0,1}}\partial_{v_1}^{n_{1,1}}\partial_{u_0}^{l_{0,1}}b_{2,k,\alpha_{1}}(v_{0},v_{1},u_{0}) \times
\Pi_{h \in I(\alpha) \setminus \{ 0 \}} \delta_{0,l_{h,1}} \label{derivative_b2kalpha1}
\end{multline}
and
\begin{multline}
\partial_{v_0}^{n_{0,2}}\partial_{v_1}^{n_{1,2}+1}\Pi_{h \in I(\alpha)} \partial_{u_h}^{l_{h,2}}
(\phi_{\alpha_{2}+k-S}(v_{0},v_{1},(u_{h})_{h \in I(\alpha_{2}+k-S)}))\\
= \partial_{v_0}^{n_{0,2}}\partial_{v_1}^{n_{1,2}+1}\Pi_{h \in I(\alpha_{2}+k-S)} \partial_{u_h}^{l_{h,2}}
(\phi_{\alpha_{2}+k-S}(v_{0},v_{1},(u_{h})_{h \in I(\alpha_{2}+k-S)}))\\
\times \Pi_{h \in I(\alpha) \setminus I(\alpha_{2}+k-S)} \delta_{0,l_{h,2}}.
\end{multline}
We recall that
\begin{equation}
B_{j}(v_{0},v_{1},(u_{h})_{h \in I(\alpha_{2}+k-S+1)}) = (j+1)\nu u_{j+1}
\end{equation}
for all $j \in I(\alpha_{2}+k-S)$ and we deduce that
\begin{multline}
\partial_{v_0}^{n_0}\partial_{v_1}^{n_1} \Pi_{h \in I(\alpha)}\partial_{u_h}^{l_h}(b_{2,k,\alpha_{1}}(v_{0},v_{1},u_{0})
B_{j}(v_{0},v_{1},(u_{h})_{h \in I(\alpha_{2}+k-S+1)}) \\
\times \partial_{u_j}\phi_{\alpha_{2}+k-S}(v_{0},v_{1},(u_{h})_{h \in I(\alpha_{2}+k-S)})) \\
=
\sum_{\scriptscriptstyle{\stackrel{n_{0,1}+n_{0,2}+n_{0,3}=n_{0},
n_{1,1}+n_{1,2}+n_{1,3}=n_{1}}{l_{h,1}+l_{h,2}+l_{h,3}=l_{h},h \in I(\alpha)}}}
\frac{n_{0}!n_{1}!\Pi_{h \in I(\alpha)}l_{h}!}{n_{0,1}!n_{0,2}!n_{0,3}!n_{1,1}!n_{1,2}!n_{1,3}!
\Pi_{h \in I(\alpha)}l_{h,1}!l_{h,2}!l_{h,3}!}\\
\partial_{v_0}^{n_{0,1}}\partial_{v_1}^{n_{1,1}}\Pi_{h \in I(\alpha)} \partial_{u_h}^{l_{h,1}}(b_{2,k,\alpha_{1}}(v_{0},v_{1},u_{0}))\\
\times \partial_{v_0}^{n_{0,2}}\partial_{v_1}^{n_{1,2}}\Pi_{h \in I(\alpha)}
\partial_{u_h}^{l_{h,2}}(B_{j}(v_{0},v_{1},(u_{h})_{h \in I(\alpha_{2}+k-S+1)}))\\
\times \partial_{v_0}^{n_{0,3}}\partial_{v_1}^{n_{1,3}}(\Pi_{h \in I(\alpha),h \neq j}
\partial_{u_h}^{l_{h,3}})\partial_{u_j}^{l_{j,3}+1}\phi_{\alpha_{2}+k-S}(v_{0},v_{1},(u_{h})_{h \in I(\alpha_{2}+k-S)}).
\label{derivative_triple_product_Bj}
\end{multline}
Inside the formula (\ref{derivative_triple_product_Bj}), we can rewrite the relations (\ref{derivative_b2kalpha1}) and
\begin{multline}
 \partial_{v_0}^{n_{0,2}}\partial_{v_1}^{n_{1,2}}\Pi_{h \in I(\alpha)}
\partial_{u_h}^{l_{h,2}}B_{j}(v_{0},v_{1},(u_{h})_{h \in I(\alpha_{2}+k-S+1)}) \\
=
\partial_{v_0}^{n_{0,2}}\partial_{v_1}^{n_{1,2}}\Pi_{h \in I(\alpha_{2}+k-S+1)}
\partial_{u_h}^{l_{h,2}}B_{j}(v_{0},v_{1},(u_{h})_{h \in I(\alpha_{2}+k-S+1)}) \times \Pi_{h \in I(\alpha) \setminus I(\alpha_{2}+k-S+1)}
\delta_{0,l_{h,2}}
\end{multline}
with the factorization (\ref{derivative_partial_uj_phi_alpha2kS}).
\end{proof}

\subsection{Majorant series and a functional equation with infinitely many variables}

\begin{defin}
We denote $\mathbb{G}[[V_{0},V_{1},(U_{h})_{h \geq 0},W]]$ the vector space of formal series in the variables
$V_{0},V_{1},(U_{h})_{h \geq 0},W$ of the form
\begin{equation}
\Psi(V_{0},V_{1},(U_{h})_{h \geq 0},W) =
\sum_{\alpha \geq 0} \Psi_{\alpha}(V_{0},V_{1},(U_{h})_{h \in I(\alpha)})
\frac{W^\alpha}{\alpha!}
\end{equation}
where $\Psi_{\alpha} \in \mathbb{C}[[V_{0},V_{1},(U_{h})_{h \in I(\alpha)}]]$ for all $\alpha \geq 0$.
\end{defin}
We keep the notations of the previous section and we introduce the following formal series:
\begin{multline}
B_{m,k}(V_{0},V_{1},U_{0},W) = \sum_{\alpha \geq 0}
\left( \sum_{n_{0},n_{1},l_{0} \geq 0} b_{m,k,\alpha,n_{0},n_{1},l_{0}}\frac{V_{0}^{n_0}}{n_{0}!}
\frac{V_{1}^{n_1}}{n_{1}!}\frac{U_{0}^{l_0}}{l_{0}!} \right) \frac{W^{\alpha}}{\alpha!},\\
\tilde{\Omega}(V_{0},V_{1},(U_{h})_{h \geq 0},W) = \sum_{\alpha \geq 0}
\left( \sum_{n_{0},n_{1},l_{h} \geq 0, h \in I(\alpha)} \tilde{\omega}_{\alpha,n_{0},n_{1},(l_{h})_{h \in I(\alpha)}}
\frac{V_{0}^{n_0}}{n_{0}!}\frac{V_{1}^{n_1}}{n_{1}!}\Pi_{h \in I(\alpha)}\frac{U_{h}^{l_h}}{l_{h}!} \right)
\frac{W^{\alpha}}{\alpha!}
\label{B_mk_V_U_W_and_tildeOmega_V_U_W_defin}
\end{multline}
for $m=1,2,3$, all $k \in \mathcal{S}$, and
\begin{multline*}
\mathbf{A}_{j,\alpha}(V_{0},V_{1},(U_{h})_{h \in I(\alpha)}) =
\sum_{n_{0},n_{1},l_{h} \geq 0, h \in I(\alpha)} A_{j,\alpha,n_{0},n_{1},(l_{h})_{h \in I(\alpha)}}
\frac{V_{0}^{n_0}}{n_{0}!}\frac{V_{1}^{n_1}}{n_{1}!}\Pi_{h \in I(\alpha)} \frac{U_{h}^{l_h}}{l_{h}!},\\
\mathbf{B}_{j,\alpha}(V_{0},V_{1},(U_{h})_{h \in I(\alpha)}) =
\sum_{n_{0},n_{1},l_{h} \geq 0, h \in I(\alpha)} B_{j,\alpha,n_{0},n_{1},(l_{h})_{h \in I(\alpha)}}
\frac{V_{0}^{n_0}}{n_{0}!}\frac{V_{1}^{n_1}}{n_{1}!}\Pi_{h \in I(\alpha)} \frac{U_{h}^{l_h}}{l_{h}!}
\end{multline*}
for all $\alpha \geq 0$, all $j \in I(\alpha)$. We also introduce the following linear operators acting on
$\mathbb{G}[[V_{0},V_{1},(U_{h})_{h \geq 0},W]]$. Let
\begin{multline*}
\mathbb{D}_{\mathbf{A}}\Psi(V_{0},V_{1},(U_{h})_{h \geq 0},W)\\
= \sum_{\alpha \geq 0} (\sum_{j \in I(\alpha)} 
\mathbf{A}_{j,\alpha+1}(V_{0},V_{1},(U_{h})_{h \in I(\alpha+1)})(\partial_{U_j}\Psi_{\alpha})(V_{0},V_{1},(U_{h})_{h \in I(\alpha)}))
\frac{W^{\alpha}}{\alpha!},\\
\mathbb{D}_{\mathbf{B}}\Psi(V_{0},V_{1},(U_{h})_{h \geq 0},W) \\
= \sum_{\alpha \geq 0} (\sum_{j \in I(\alpha)}
\mathbf{B}_{j,\alpha+1}(V_{0},V_{1},(U_{h})_{h \in I(\alpha+1)})(\partial_{U_j}\Psi_{\alpha})(V_{0},V_{1},(U_{h})_{h \in I(\alpha)}))
\frac{W^{\alpha}}{\alpha!}
\end{multline*}
for all $\Psi \in \mathbb{G}[[V_{0},V_{1},(U_{h})_{h \geq 0},W]]$. We stress the fact that although these operators act on
$\mathbb{G}[[V_{0},V_{1},(U_{h})_{h \geq 0},W]]$ their image does not have to belong to this space.
\begin{prop} A formal series
$$ \Psi(V_{0},V_{1},(U_{h})_{h \geq 0},W) = \sum_{\alpha \geq 0}
\left( \sum_{n_{0},n_{1},l_{h} \geq 0, h \in I(\alpha)} \psi_{\alpha,n_{0},n_{1},(l_{h})_{h \in I(\alpha)}}
\frac{V_{0}^{n_0}}{n_{0}!}\frac{V_{1}^{n_1}}{n_{1}!}\Pi_{h \in I(\alpha)}\frac{U_{h}^{l_h}}{l_{h}!} \right)
\frac{W^{\alpha}}{\alpha!} $$
satisfies the following functional equation
\begin{multline}
\Psi(V_{0},V_{1},(U_{h})_{h \geq 0},W)
= \sum_{k \in \mathcal{S}} ( B_{1,k}(V_{0},V_{1},U_{0},W)\partial_{W}^{-S+k}\partial_{V_0}
\Psi(V_{0},V_{1},(U_{h})_{h \geq 0},W)\\
+ B_{1,k}(V_{0},V_{1},U_{0},W)\partial_{W}^{-S+k}\mathbb{D}_{\mathbf{A}}\Psi(V_{0},V_{1},(U_{h})_{h \geq 0},W) )\\
+ \sum_{k \in \mathcal{S}} ( B_{2,k}(V_{0},V_{1},U_{0},W)\partial_{W}^{-S+k}\partial_{V_1}
\Psi(V_{0},V_{1},(U_{h})_{h \geq 0},W)\\
+ B_{2,k}(V_{0},V_{1},U_{0},W)\partial_{W}^{-S+k}\mathbb{D}_{\mathbf{B}}\Psi(V_{0},V_{1},(U_{h})_{h \geq 0},W) )\\
+ \sum_{k \in \mathcal{S}} B_{3,k}(V_{0},V_{1},U_{0},W)\partial_{W}^{-S+k}\Psi(V_{0},V_{1},(U_{h})_{h \geq 0},W) \\+
\tilde{\Omega}(V_{0},V_{1},(U_{h})_{h \geq 0},W) \label{functional_eq_defin_psi}
\end{multline}
if and only if its coefficients $\psi_{\alpha,n_{0},n_{1},(l_{h})_{h \in I(\alpha)}}$ satisfy the following recursion
\begin{multline}
\frac{\psi_{\alpha,n_{0},n_{1},(l_{h})_{h \in I(\alpha)}}}{\alpha!} =  \sum_{k \in \mathcal{S}} \sum_{\scriptscriptstyle{
\stackrel{\alpha_{1}+\alpha_{2} = \alpha}{\alpha_{2} \geq S-k}}}
\sum_{\scriptscriptstyle{\stackrel{n_{0,1}+n_{0,2}=n_{0},n_{1,1}+n_{1,2}=n_{1}}{l_{h,1}+l_{h,2}=l_{h},h \in I(\alpha)}}}
\frac{n_{0}!n_{1}!\Pi_{h \in I(\alpha)}l_{h}!}{n_{0,1}!n_{0,2}!n_{1,1}!n_{1,2}!\Pi_{h \in I(\alpha)}l_{h,1}!l_{h,2}!}\\
\frac{b_{1,k,\alpha_{1},n_{0,1},n_{1,1},l_{0,1}}}{\alpha_{1}!}\Pi_{h \in I(\alpha)\setminus\{0 \}} \delta_{0,l_{h,1}} \times
\frac{\psi_{\alpha_{2}+k-S,n_{0,2}+1,n_{1,2},(l_{h,2})_{h \in I(\alpha_{2}+k-S)}}}{\alpha_{2}!}
\Pi_{h \in I(\alpha)\setminus I(\alpha_{2}+k-S)} \delta_{0,l_{h,2}}\\
+ \sum_{j \in I(\alpha_{2}+k-S)}\sum_{\scriptscriptstyle{\stackrel{n_{0,1}+n_{0,2}+n_{0,3}=n_{0},
n_{1,1}+n_{1,2}+n_{1,3}=n_{1}}{l_{h,1}+l_{h,2}+l_{h,3}=l_{h},h \in I(\alpha)}}}
\frac{n_{0}!n_{1}!\Pi_{h \in I(\alpha)}l_{h}!}{n_{0,1}!n_{0,2}!n_{0,3}!n_{1,1}!n_{1,2}!n_{1,3}!
\Pi_{h \in I(\alpha)}l_{h,1}!l_{h,2}!l_{h,3}!}\\
\frac{b_{1,k,\alpha_{1},n_{0,1},n_{1,1},l_{0,1}}}{\alpha_{1}!}\Pi_{h \in I(\alpha)\setminus\{0 \}} \delta_{0,l_{h,1}} \times
A_{j,\alpha_{2}+k-S+1,n_{0,2},n_{1,2},(l_{h,2})_{h \in I(\alpha_{2}+k-S+1)}}\\
\times \Pi_{h \in I(\alpha) \setminus I(\alpha_{2}+k-S+1)} \delta_{0,l_{h,2}} \times
\frac{\psi_{\alpha_{2}+k-S,n_{0,3},n_{1,3},(l_{h,3})_{h \in I(\alpha_{2}+k-S),h \neq j},l_{j,3}+1}}{\alpha_{2}!} \times
\Pi_{h \in I(\alpha) \setminus I(\alpha_{2}+k-S)} \delta_{0,l_{h,3}}\\
+ \sum_{k \in \mathcal{S}} \sum_{\scriptscriptstyle{\stackrel{\alpha_{1}+\alpha_{2} = \alpha}{\alpha_{2} \geq S-k}}}
\sum_{\scriptscriptstyle{\stackrel{n_{0,1}+n_{0,2}=n_{0},n_{1,1}+n_{1,2}=n_{1}}{l_{h,1}+l_{h,2}=l_{h},h \in I(\alpha)}}}
\frac{n_{0}!n_{1}!\Pi_{h \in I(\alpha)}l_{h}!}{n_{0,1}!n_{0,2}!n_{1,1}!n_{1,2}!\Pi_{h \in I(\alpha)}l_{h,1}!l_{h,2}!}\\
\frac{b_{2,k,\alpha_{1},n_{0,1},n_{1,1},l_{0,1}}}{\alpha_{1}!}\Pi_{h \in I(\alpha)\setminus\{0 \}} \delta_{0,l_{h,1}} \times
\frac{\psi_{\alpha_{2}+k-S,n_{0,2},n_{1,2}+1,(l_{h,2})_{h \in I(\alpha_{2}+k-S)}}}{\alpha_{2}!}
\Pi_{h \in I(\alpha) \setminus I(\alpha_{2}+k-S)} \delta_{0,l_{h,2}}\\
+ \sum_{j \in I(\alpha_{2}+k-S)}\sum_{\scriptscriptstyle{\stackrel{n_{0,1}+n_{0,2}+n_{0,3}=n_{0},
n_{1,1}+n_{1,2}+n_{1,3}=n_{1}}{l_{h,1}+l_{h,2}+l_{h,3}=l_{h},h \in I(\alpha)}}}
\frac{n_{0}!n_{1}!\Pi_{h \in I(\alpha)}l_{h}!}{n_{0,1}!n_{0,2}!n_{0,3}!n_{1,1}!n_{1,2}!n_{1,3}!
\Pi_{h \in I(\alpha)}l_{h,1}!l_{h,2}!l_{h,3}!}\\
\frac{b_{2,k,\alpha_{1},n_{0,1},n_{1,1},l_{0,1}}}{\alpha_{1}!}\Pi_{h \in I(\alpha)\setminus\{0 \}} \delta_{0,l_{h,1}} \times
B_{j,\alpha_{2}+k-S+1,n_{0,2},n_{1,2},(l_{h,2})_{h \in I(\alpha_{2}+k-S+1)}}\\
\times \Pi_{h \in I(\alpha) \setminus I(\alpha_{2}+k-S+1)} \delta_{0,l_{h,2}} \times
\frac{\psi_{\alpha_{2}+k-S,n_{0,3},n_{1,3},(l_{h,3})_{h \in I(\alpha_{2}+k-S),h \neq j},l_{j,3}+1}}{\alpha_{2}!} \times
\Pi_{h \in I(\alpha) \setminus I(\alpha_{2}+k-S)} \delta_{0,l_{h,3}} \\
+  \sum_{k \in \mathcal{S}} \sum_{\scriptscriptstyle{
\stackrel{\alpha_{1}+\alpha_{2} = \alpha}{\alpha_{2} \geq S-k}}}
\sum_{\scriptscriptstyle{\stackrel{n_{0,1}+n_{0,2}=n_{0},n_{1,1}+n_{1,2}=n_{1}}{l_{h,1}+l_{h,2}=l_{h},h \in I(\alpha)}}}
\frac{n_{0}!n_{1}!\Pi_{h \in I(\alpha)}l_{h}!}{n_{0,1}!n_{0,2}!n_{1,1}!n_{1,2}!\Pi_{h \in I(\alpha)}l_{h,1}!l_{h,2}!}\\
\frac{b_{3,k,\alpha_{1},n_{0,1},n_{1,1},l_{0,1}}}{\alpha_{1}!}\Pi_{h \in I(\alpha)\setminus\{0 \}} \delta_{0,l_{h,1}} \times
\frac{\psi_{\alpha_{2}+k-S,n_{0,2},n_{1,2},(l_{h,2})_{h \in I(\alpha_{2}+k-S)}}}{\alpha_{2}!}
\Pi_{h \in I(\alpha)\setminus I(\alpha_{2}+k-S)} \delta_{0,l_{h,2}}\\
+ \frac{\tilde{\omega}_{\alpha,n_{0},n_{1},(l_{h})_{h \in I(\alpha)}}}{\alpha!} \label{recursion_psii_alpha_n_l}
\end{multline}
for all $\alpha \geq 0$, all $n_{0},n_{1},l_{h} \geq 0$ with $h \in I(\alpha)$.
\end{prop}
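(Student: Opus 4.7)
The plan is to prove the equivalence by a direct coefficient identification in the formal series ring $\mathbb{G}[[V_0,V_1,(U_h)_{h \geq 0},W]]$. Since the functional equation (\ref{functional_eq_defin_psi}) is linear in $\Psi$, it suffices to compute, for each term on the right hand side, the coefficient of the monomial $\frac{V_0^{n_0}}{n_0!}\frac{V_1^{n_1}}{n_1!}\Pi_{h \in I(\alpha)}\frac{U_h^{l_h}}{l_h!}\frac{W^{\alpha}}{\alpha!}$ and check that the resulting sum matches the right hand side of (\ref{recursion_psii_alpha_n_l}). The converse direction then follows by multiplying the recursion by this monomial and summing.

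I would proceed term by term. For each factor $B_{m,k}(V_0,V_1,U_0,W)\partial_W^{-S+k}(\cdot)$, the Cauchy product in the $W$ variable yields the coefficient $\sum_{\alpha_1+\alpha_2=\alpha,\alpha_2 \geq S-k} \binom{\alpha}{\alpha_1}\frac{1}{\alpha_1!}\frac{1}{\alpha_2!}(\cdot)$ of $W^\alpha/\alpha!$, where the constraint $\alpha_2 \geq S-k$ encodes the fact that the iterated integration $\partial_W^{-S+k}$ shifts degrees upward. Next, applying $\partial_{V_0}^{n_0}\partial_{V_1}^{n_1}\Pi_{h \in I(\alpha)}\partial_{U_h}^{l_h}$ to the product of the coefficient $\sum b_{m,k,\alpha_1,\cdots}\frac{V_0^{n_{0,1}}}{n_{0,1}!}\cdots$ of $B_{m,k}$ and the coefficient of $\Psi_{\alpha_2+k-S}$ (or of $\mathbb{D}_{\mathbf{A}}\Psi$, $\mathbb{D}_{\mathbf{B}}\Psi$, $\partial_{V_0}\Psi$, $\partial_{V_1}\Psi$) and evaluating at the origin, the multivariate Leibniz rule produces the multinomial coefficients $\frac{n_0!\,n_1!\,\Pi_{h \in I(\alpha)} l_h!}{n_{0,1}!n_{0,2}!\cdots}$ appearing in (\ref{recursion_psii_alpha_n_l}), the sum running over all two-part splittings (for the bare $\partial_{V_0}\Psi$ and $\partial_{V_1}\Psi$ terms) or three-part splittings (for the $\mathbb{D}_{\mathbf{A}}$ and $\mathbb{D}_{\mathbf{B}}$ terms, where an additional factor $\mathbf{A}_{j,\alpha_2+k-S+1}$ or $\mathbf{B}_{j,\alpha_2+k-S+1}$ is present and is itself differentiated).

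The Kronecker delta factors arise from the restricted dependence of the objects involved: since $B_{m,k,\alpha_1}$ is a function of $(V_0,V_1,U_0)$ only, any derivative $\partial_{U_h}^{l_{h,1}}$ with $h \in I(\alpha)\setminus\{0\}$ vanishes unless $l_{h,1}=0$, yielding $\Pi_{h \in I(\alpha)\setminus\{0\}}\delta_{0,l_{h,1}}$; since $\Psi_{\alpha_2+k-S}$ lives in $\mathbb{C}[[V_0,V_1,(U_h)_{h \in I(\alpha_2+k-S)}]]$, derivatives in $U_h$ for $h \notin I(\alpha_2+k-S)$ annihilate it unless $l_{h,2}=0$, giving $\Pi_{h \in I(\alpha)\setminus I(\alpha_2+k-S)}\delta_{0,l_{h,2}}$; the analogous statement for $\mathbf{A}_{j,\alpha_2+k-S+1}, \mathbf{B}_{j,\alpha_2+k-S+1}$ explains the delta factors attached to these terms. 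The shift $l_{j,3} \to l_{j,3}+1$ in the $\psi$ term is produced by the extra $\partial_{U_j}$ built into the definitions of $\mathbb{D}_{\mathbf{A}}$ and $\mathbb{D}_{\mathbf{B}}$, while the shifts $n_{0,2}\to n_{0,2}+1$ and $n_{1,2}\to n_{1,2}+1$ come from the explicit $\partial_{V_0}$ and $\partial_{V_1}$.

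The main obstacle is purely bookkeeping: matching each of the five blocks of the functional equation with the corresponding block of the recursion and verifying that every index shift, every multinomial coefficient, and every delta factor appears correctly. Fortunately, the entire computation is formally identical to the one carried out in the proof of Proposition 2 (the recursion (\ref{recursion_psii_alpha_n_l}) has the same shape as (\ref{ineq_recursion_varphi_alpha_n_l}) with $\varphi$ replaced by $\psi$ and the inequality replaced by equality), so one may appeal to the expansions (\ref{derivative_b3kalpha1})--(\ref{derivative_triple_product_Bj}) already established there, reading them as equalities of formal coefficients rather than as estimates. Once each block is matched, combining them yields the equivalence between (\ref{functional_eq_defin_psi}) and (\ref{recursion_psii_alpha_n_l}).
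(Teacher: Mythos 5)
Your proposal is correct and follows essentially the same route as the paper: the authors also argue by identification of Taylor coefficients in $V_{0},V_{1},(U_{h})_{h\in I(\alpha)},W$, with the Cauchy product in $W$ producing the constraint $\alpha_{1}+\alpha_{2}=\alpha$, $\alpha_{2}\geq S-k$, the Cauchy/Leibniz product in the remaining variables producing the multinomial coefficients, and the restricted variable dependence of $B_{m,k,\alpha_{1}}$, $\Psi_{\alpha_{2}+k-S}$, $\mathbf{A}_{j,\alpha_{2}+k-S+1}$ and $\mathbf{B}_{j,\alpha_{2}+k-S+1}$ producing the Kronecker delta factors. Your observation that the computation parallels the expansions in the proof of Proposition 2, read as exact coefficient identities rather than estimates, is exactly the structural point the paper exploits.
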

\begin{proof} We proceed by identification of the coefficients in the Taylor expansion with respect to the variables
$V_{0},V_{1},(U_{h})_{h \in I(\alpha)}$ and $W$ for all $\alpha \geq 0$. By definition, we have that
$$ B_{1,k}(V_{0},V_{1},U_{0},W) \partial_{W}^{-S+k} \partial_{V_0} \Psi(V_{0},V_{1},(U_{h})_{h \geq 0},W)
= \sum_{\alpha \geq 0} \sum_{\stackrel{\alpha_{1}+\alpha_{2}=\alpha}{\alpha_{2} \geq S-k}}
\mathcal{C}_{\alpha_{1},\alpha_{2}}^{1} W^{\alpha}
$$
where the coefficients $\mathcal{C}_{\alpha_{1},\alpha_{2}}^{1}$ can be rewritten, using the Kronecker symbol $\delta_{0,m}$, in
the form
\begin{multline*}
 \mathcal{C}_{\alpha_{1},\alpha_{2}}^{1} = (\sum_{n_{0},n_{1},l_{h} \geq 0,h \in I(\alpha)}
\frac{b_{1,k,\alpha_{1},n_{0},n_{1},l_{0}}}{\alpha_{1}!}\Pi_{h \in I(\alpha) \setminus \{ 0 \}} \delta_{0,l_h}
\frac{V_{0}^{n_0}}{n_{0}!}\frac{V_{1}^{n_1}}{n_{1}!}\Pi_{h \in I(\alpha)} \frac{U_{h}^{l_h}}{l_{h}!})\\
\times (\sum_{n_{0},n_{1},l_{h} \geq 0, h \in I(\alpha)}
\frac{\psi_{\alpha_{2}+k-S,n_{0}+1,n_{1},(l_{h})_{h \in I(\alpha_{2}+k-S)}}}{\alpha_{2}!}
\Pi_{h \in I(\alpha) \setminus I(\alpha_{2}+k-S)} \delta_{0,l_h}
\frac{V_{0}^{n_0}}{n_{0}!}\frac{V_{1}^{n_1}}{n_{1}!}\Pi_{h \in I(\alpha)}\frac{U_{h}^{l_h}}{l_{h}!})
\end{multline*}
Hence,
\begin{multline}
\mathcal{C}_{\alpha_{1},\alpha_{2}}^{1} = \sum_{n_{0},n_{1},l_{h} \geq 0,h \in I(\alpha)}
( \sum_{\scriptscriptstyle{\stackrel{n_{0,1}+n_{0,2}=n_{0},n_{1,1}+n_{1,2}=n_{1}}{l_{h,1}+l_{h,2}=l_{h},h \in I(\alpha)}}}
\frac{b_{1,k,\alpha_{1},n_{0,1},n_{1,1},l_{0,1}}}{\alpha_{1}!n_{0,1}!n_{1,1}! \Pi_{h \in I(\alpha)}l_{h,1}!}
\Pi_{h \in I(\alpha) \setminus \{ 0 \}} \delta_{0,l_{h,1}}\\
\times \frac{\psi_{\alpha_{2}+k-S,n_{0,2}+1,n_{1,2},
(l_{h,2})_{h \in I(\alpha_{2}+k-S)}}}{\alpha_{2}!n_{0,2}!n_{1,2}! \Pi_{h \in I(\alpha)}l_{h,2}!}
\Pi_{h \in I(\alpha) \setminus I(\alpha_{2}+k-S)} \delta_{0,l_{h,2}}) V_{0}^{n_0}V_{1}^{n_1} \Pi_{h \in I(\alpha)} U_{h}^{l_h}.
\label{mathcalC1}
\end{multline}
We also have that
$$ B_{1,k}(V_{0},V_{1},U_{0},W) \partial_{W}^{-S+k}\mathbb{D}_{\mathbf{A}}\Psi(V_{0},V_{1},(U_{h})_{h \geq 0},W)
= \sum_{\alpha \geq 0} \sum_{\stackrel{\alpha_{1}+\alpha_{2}=\alpha}{\alpha_{2} \geq S-k}}
\mathcal{F}_{\alpha_{1},\alpha_{2}}^{1} W^{\alpha}
$$
where the coefficients $\mathcal{F}_{\alpha_{1},\alpha_{2}}^{1}$ can be rewritten in the form
\begin{multline*}
\mathcal{F}_{\alpha_{1},\alpha_{2}}^{1} = \sum_{j \in I(\alpha_{2}-S+k)}( \sum_{n_{0},n_{1},l_{h} \geq 0,h \in I(\alpha)}
\frac{b_{1,k,\alpha_{1},n_{0},n_{1},l_{0}}}{\alpha_{1}!}\Pi_{h \in I(\alpha) \setminus \{ 0 \}} \delta_{0,l_h}
\frac{V_{0}^{n_0}}{n_{0}!}\frac{V_{1}^{n_1}}{n_{1}!}\Pi_{h \in I(\alpha)} \frac{U_{h}^{l_h}}{l_{h}!} )\\
\times (\sum_{n_{0},n_{1},l_{h} \geq 0,h \in I(\alpha)}
A_{j,\alpha_{2}-S+k+1,n_{0},n_{1},(l_{h})_{h \in I(\alpha_{2}-S+k+1)}}
\Pi_{h \in I(\alpha) \setminus I(\alpha_{2}-S+k+1)} \delta_{0,l_h}\\
\times \frac{V_{0}^{n_0}}{n_{0}!}\frac{V_{1}^{n_1}}{n_{1}!}\Pi_{h \in I(\alpha)} \frac{U_{h}^{l_h}}{l_{h}!} )\\
\times ( \sum_{n_{0},n_{1},l_{h} \geq 0,h \in I(\alpha)}
\frac{\psi_{\alpha_{2}-S+k,n_{0},n_{1},(l_{h})_{h \in I(\alpha_{2}-S+k),h \neq j},l_{j}+1}}{\alpha_{2}!}
\Pi_{h \in I(\alpha) \setminus I(\alpha_{2} - S+k)} \delta_{0,l_h}\\
\times \frac{V_{0}^{n_0}}{n_{0}!}\frac{V_{1}^{n_1}}{n_{1}!}\Pi_{h \in I(\alpha)} \frac{U_{h}^{l_h}}{l_{h}!} )
\end{multline*}
Therefore,
\begin{multline}
\mathcal{F}_{\alpha_{1},\alpha_{2}}^{1} =\sum_{j \in I(\alpha_{2}-S+k)}( \sum_{n_{0},n_{1},l_{h} \geq 0,h \in I(\alpha)}\\
( \sum_{\scriptscriptstyle{\stackrel{n_{0,1}+n_{0,2}+n_{0,3}=n_{0},n_{1,1}+n_{1,2}+n_{1,3}=n_{1}}{l_{h,1}+l_{h,2}+l_{h,3}
=l_{h},h \in I(\alpha)}}}
\frac{b_{1,k,\alpha_{1},n_{0,1},n_{1,1},l_{0,1}}}{\alpha_{1}!n_{0,1}!n_{1,1}! \Pi_{h \in I(\alpha)}l_{h,1}!}
\Pi_{h \in I(\alpha) \setminus \{ 0 \}} \delta_{0,l_{h,1}}\\
\times \frac{A_{j,\alpha_{2}-S+k+1,n_{0,2},n_{1,2},(l_{h,2})_{h \in I(\alpha_{2}-S+k+1)}}}{n_{0,2}!n_{1,2}!
\Pi_{h \in I(\alpha)}l_{h,2}!} \Pi_{h \in I(\alpha) \setminus I(\alpha_{2}-S+k+1)} \delta_{0,l_{h,2}}\\
\times \frac{\psi_{\alpha_{2}-S+k,n_{0,3},n_{1,3},(l_{h,3})_{h \in I(\alpha_{2}-S+k),h \neq j},l_{j,3}+1}}{\alpha_{2}!
n_{0,3}!n_{1,3}!\Pi_{h \in I(\alpha)}l_{h,3}!} \Pi_{h \in I(\alpha) \setminus I(\alpha_{2} - S+k)} \delta_{0,l_{h,3}})\\
\times V_{0}^{n_0}V_{1}^{n_1} \Pi_{h \in I(\alpha)} U_{h}^{l_h} ) \label{mathcalF1}
\end{multline}

On the other hand, using similar computations we get
$$ B_{2,k}(V_{0},V_{1},U_{0},W) \partial_{W}^{-S+k} \partial_{V_1} \Psi(V_{0},V_{1},(U_{h})_{h \geq 0},W)
= \sum_{\alpha \geq 0} \sum_{\stackrel{\alpha_{1}+\alpha_{2}=\alpha}{\alpha_{2} \geq S-k}}
\mathcal{C}_{\alpha_{1},\alpha_{2}}^{2} W^{\alpha}
$$
where
\begin{multline}
\mathcal{C}_{\alpha_{1},\alpha_{2}}^{2} = \sum_{n_{0},n_{1},l_{h} \geq 0,h \in I(\alpha)}
( \sum_{\scriptscriptstyle{\stackrel{n_{0,1}+n_{0,2}=n_{0},n_{1,1}+n_{1,2}=n_{1}}{l_{h,1}+l_{h,2}=l_{h},h \in I(\alpha)}}}
\frac{b_{2,k,\alpha_{1},n_{0,1},n_{1,1},l_{0,1}}}{\alpha_{1}!n_{0,1}!n_{1,1}! \Pi_{h \in I(\alpha)}l_{h,1}!}
\Pi_{h \in I(\alpha) \setminus \{ 0 \}} \delta_{0,l_{h,1}}\\
\times \frac{\psi_{\alpha_{2}+k-S,n_{0,2},n_{1,2}+1,
(l_{h,2})_{h \in I(\alpha_{2}+k-S)}}}{\alpha_{2}!n_{0,2}!n_{1,2}! \Pi_{h \in I(\alpha)}l_{h,2}!}
\Pi_{h \in I(\alpha) \setminus I(\alpha_{2}+k-S)} \delta_{0,l_{h,2}}) V_{0}^{n_0}V_{1}^{n_1} \Pi_{h \in I(\alpha)} U_{h}^{l_h}.
\label{mathcalC2}
\end{multline}
We also have that
$$ B_{2,k}(V_{0},V_{1},U_{0},W) \partial_{W}^{-S+k}\mathbb{D}_{\mathbf{B}}\Psi(V_{0},V_{1},(U_{h})_{h \geq 0},W)
= \sum_{\alpha \geq 0} \sum_{\stackrel{\alpha_{1}+\alpha_{2}=\alpha}{\alpha_{2} \geq S-k}}
\mathcal{F}_{\alpha_{1},\alpha_{2}}^{2} W^{\alpha}
$$
where
\begin{multline}
\mathcal{F}_{\alpha_{1},\alpha_{2}}^{2} =\sum_{j \in I(\alpha_{2}-S+k)}( \sum_{n_{0},n_{1},l_{h} \geq 0,h \in I(\alpha)}\\
( \sum_{\scriptscriptstyle{\stackrel{n_{0,1}+n_{0,2}+n_{0,3}=n_{0},n_{1,1}+n_{1,2}+n_{1,3}=n_{1}}{l_{h,1}+l_{h,2}+l_{h,3}
=l_{h},h \in I(\alpha)}}}
\frac{b_{2,k,\alpha_{1},n_{0,1},n_{1,1},l_{0,1}}}{\alpha_{1}!n_{0,1}!n_{1,1}! \Pi_{h \in I(\alpha)}l_{h,1}!}
\Pi_{h \in I(\alpha) \setminus \{ 0 \}} \delta_{0,l_{h,1}}\\
\times \frac{B_{j,\alpha_{2}-S+k+1,n_{0,2},n_{1,2},(l_{h,2})_{h \in I(\alpha_{2}-S+k+1)}}}{n_{0,2}!n_{1,2}!
\Pi_{h \in I(\alpha)}l_{h,2}!} \Pi_{h \in I(\alpha) \setminus I(\alpha_{2}-S+k+1)} \delta_{0,l_{h,2}}\\
\times \frac{\psi_{\alpha_{2}-S+k,n_{0,3},n_{1,3},(l_{h,3})_{h \in I(\alpha_{2}-S+k),h \neq j},l_{j,3}+1}}{\alpha_{2}!
n_{0,3}!n_{1,3}!\Pi_{h \in I(\alpha)}l_{h,3}!} \Pi_{h \in I(\alpha) \setminus I(\alpha_{2} - S+k)} \delta_{0,l_{h,3}})\\
\times V_{0}^{n_0}V_{1}^{n_1} \Pi_{h \in I(\alpha)} U_{h}^{l_h} ) \label{mathcalF2}
\end{multline}
and
$$ B_{3,k}(V_{0},V_{1},U_{0},W) \partial_{W}^{-S+k}\Psi(V_{0},V_{1},(U_{h})_{h \geq 0},W)
= \sum_{\alpha \geq 0} \sum_{\stackrel{\alpha_{1}+\alpha_{2}=\alpha}{\alpha_{2} \geq S-k}}
\mathcal{C}_{\alpha_{1},\alpha_{2}}^{3} W^{\alpha}
$$
where
\begin{multline}
\mathcal{C}_{\alpha_{1},\alpha_{2}}^{3} = \sum_{n_{0},n_{1},l_{h} \geq 0,h \in I(\alpha)}
( \sum_{\scriptscriptstyle{\stackrel{n_{0,1}+n_{0,2}=n_{0},n_{1,1}+n_{1,2}=n_{1}}{l_{h,1}+l_{h,2}=l_{h},h \in I(\alpha)}}}
\frac{b_{3,k,\alpha_{1},n_{0,1},n_{1,1},l_{0,1}}}{\alpha_{1}!n_{0,1}!n_{1,1}! \Pi_{h \in I(\alpha)}l_{h,1}!}
\Pi_{h \in I(\alpha) \setminus \{ 0 \}} \delta_{0,l_{h,1}}\\
\times \frac{\psi_{\alpha_{2}+k-S,n_{0,2},n_{1,2},
(l_{h,2})_{h \in I(\alpha_{2}+k-S)}}}{\alpha_{2}!n_{0,2}!n_{1,2}! \Pi_{h \in I(\alpha)}l_{h,2}!}
\Pi_{h \in I(\alpha) \setminus I(\alpha_{2}+k-S)} \delta_{0,l_{h,2}}) V_{0}^{n_0}V_{1}^{n_1} \Pi_{h \in I(\alpha)} U_{h}^{l_h}.
\label{mathcalC3}
\end{multline}
Finally, gathering the expansions (\ref{mathcalC1}), (\ref{mathcalF1}), (\ref{mathcalC2}) and (\ref{mathcalF2}) with
(\ref{mathcalC3}) yields the result.
\end{proof}
\begin{prop} The sequences $\varphi_{\alpha,n_{0},n_{1},(l_{h})_{h \in I(\alpha)}}$ and
$\psi_{\alpha,n_{0},n_{1},(l_{h})_{h \in I(\alpha)}}$ satisfy the following inequalities
\begin{equation}
\varphi_{\alpha,n_{0},n_{1},(l_{h})_{h \in I(\alpha)}} \leq \psi_{\alpha,n_{0},n_{1},(l_{h})_{h \in I(\alpha)}}
\end{equation}
for all $\alpha \geq 0$, all $n_{0},n_{1} \geq 0$, all $l_{h} \geq 0$, $h \in I(\alpha)$.
\end{prop}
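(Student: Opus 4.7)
The plan is a straightforward induction on $\alpha$, exploiting the fact that the recursion (\ref{recursion_psii_alpha_n_l}) defining $\psi_{\alpha,n_{0},n_{1},(l_{h})_{h\in I(\alpha)}}$ has exactly the same shape as the upper bound (\ref{ineq_recursion_varphi_alpha_n_l}) for $\varphi_{\alpha,n_{0},n_{1},(l_{h})_{h\in I(\alpha)}}$, and that every numerical coefficient appearing in both recursions is non-negative. First I would observe that, by the definitions (\ref{b_mk_alpha_n_l_and_tilde_omega_alpha_n_l_defin}) and the definitions of $A_{j,\alpha,\ldots}$ and $B_{j,\alpha,\ldots}$, the quantities $b_{m,k,\alpha,n_{0},n_{1},l_{0}}$, $A_{j,\alpha,n_{0},n_{1},(l_{h})}$, $B_{j,\alpha,n_{0},n_{1},(l_{h})}$ and $\tilde{\omega}_{\alpha,n_{0},n_{1},(l_{h})}$ are all non-negative, being suprema of moduli of holomorphic functions. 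Likewise, the multinomial factors and the Kronecker symbols appearing in both recursions are non-negative.

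The key well-foundedness observation is that on the right-hand side of both (\ref{ineq_recursion_varphi_alpha_n_l}) and (\ref{recursion_psii_alpha_n_l}) the sequences appear only at indices $\alpha_{2}+k-S$ with $\alpha_{2}\le \alpha$ and $k<S$, so $\alpha_{2}+k-S\le \alpha-1<\alpha$. Consequently both recursions determine the values at level $\alpha$ from values at strictly lower levels, so an induction on $\alpha$ is legitimate.

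For the base case $\alpha=0$, the constraint $\alpha_{2}\ge S-k\ge 1$ combined with $\alpha_{2}\le \alpha=0$ forces all sums over $k\in\mathcal{S}$ to be empty in both recursions. Hence (\ref{ineq_recursion_varphi_alpha_n_l}) reduces to $\varphi_{0,n_{0},n_{1},l_{0}}\le \tilde{\omega}_{0,n_{0},n_{1},l_{0}}$, while (\ref{recursion_psii_alpha_n_l}) gives the equality $\psi_{0,n_{0},n_{1},l_{0}}=\tilde{\omega}_{0,n_{0},n_{1},l_{0}}$, so the desired inequality holds for $\alpha=0$ and all $n_{0},n_{1},l_{0}\ge 0$.

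For the inductive step I would assume that $\varphi_{\alpha',n_{0}',n_{1}',(l_{h}')}\le \psi_{\alpha',n_{0}',n_{1}',(l_{h}')}$ holds for every $\alpha'<\alpha$ and all admissible multi-indices, and then compare (\ref{ineq_recursion_varphi_alpha_n_l}) with (\ref{recursion_psii_alpha_n_l}) term by term. The five families of summands on the right-hand sides pair up identically, and each summand in the $\varphi$-recursion is obtained from the corresponding summand in the $\psi$-recursion by replacing $\psi_{\alpha_{2}+k-S,\cdots}$ with $\varphi_{\alpha_{2}+k-S,\cdots}$, where by the induction hypothesis $\varphi_{\alpha_{2}+k-S,\cdots}\le \psi_{\alpha_{2}+k-S,\cdots}$. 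Since all other factors are non-negative, the inequality is preserved summand by summand, and the $\tilde{\omega}$-contribution is the same on both sides. Summing yields $\varphi_{\alpha,n_{0},n_{1},(l_{h})_{h\in I(\alpha)}}\le \psi_{\alpha,n_{0},n_{1},(l_{h})_{h\in I(\alpha)}}$, completing the induction. The only mildly delicate point is verifying that the pairing of summands is exact, which is essentially a matter of reading Propositions 2 and 3 side by side; no genuine obstacle arises.
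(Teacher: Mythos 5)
Your proposal is correct and follows essentially the same route as the paper: the base case $\alpha=0$ reduces both recursions to the $\tilde{\omega}$-term (the paper even notes $\varphi_{0,\cdot}=\tilde{\omega}_{0,\cdot}=\psi_{0,\cdot}$), and the inductive step compares the inequality (\ref{ineq_recursion_varphi_alpha_n_l}) with the equality (\ref{recursion_psii_alpha_n_l}) term by term using the non-negativity of all coefficients and the fact that only indices $\alpha_{2}+k-S<\alpha$ occur on the right-hand sides. Your write-up simply makes explicit the well-foundedness and positivity observations that the paper leaves implicit.
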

\begin{proof} For $\alpha=0$, using the recursions (\ref{recursion_phi_alpha}) and (\ref{recursion_psii_alpha_n_l}), we get that
$$ \varphi_{0,n_{0},n_{1},(l_{h})_{h \in I(0)}} = \tilde{w}_{0,n_{0},n_{1},(l_{h})_{h \in I(0)}}=
\psi_{0,n_{0},n_{1},(l_{h})_{h \in I(0)}} $$
for all $n_{0},n_{1},l_{0} \geq 0$. By induction on $\alpha$ and using the inequalities (\ref{ineq_recursion_varphi_alpha_n_l})
together with the equalities (\ref{recursion_psii_alpha_n_l}), one gets the result.
\end{proof}

\section{Convergent series solutions for a functional equation with infinitely many variables}

\subsection{Banach spaces of formal series}

Let $\rho > 1$ and $\sigma,\bar{V}_{0},\bar{V}_{1},\bar{W},\bar{\delta}>0$ be real numbers.
For any given real number $b > 1$, we define the sequences $r_{b}(\alpha) = \sum_{n=0}^{\alpha} 1/(n+1)^{b}$ for all
$\alpha \geq 0$ and $\bar{U}_{h} = \bar{\delta}/(h^{b}+1)$ for all $h \geq 0$.
\begin{defin} Let $\alpha \geq 0$ be an integer. We denote $E_{\rho,\alpha,\bar{V}_{0},\bar{V}_{1},(\bar{U}_{h})_{h \in I(\alpha)}}$ the
vector space of formal series
$$ \Psi(V_{0},V_{1},(U_{h})_{h \in I(\alpha)}) = \sum_{n_{0},n_{1},l_{h} \geq 0, h \in I(\alpha)}
\psi_{n_{0},n_{1},(l_{h})_{h \in I(\alpha)}}
\frac{V_{0}^{n_0}}{n_{0}!}\frac{V_{1}^{n_1}}{n_{1}!}\Pi_{h \in I(\alpha)}\frac{U_{h}^{l_h}}{l_{h}!} $$
that belong to $\mathbb{C}[[V_{0},V_{1},(U_{h})_{h \in I(\alpha)}]]$ such that the series
\begin{multline*}
|| \Psi(V_{0},V_{1},(U_{h})_{h \in I(\alpha)}) ||_{\rho,\alpha,\bar{V}_{0},\bar{V}_{1},(\bar{U}_{h})_{h \in I(\alpha)}} \\
= \sum_{n_{0},n_{1},l_{h} \geq 0, h \in I(\alpha)} \frac{|\psi_{n_{0},n_{1},(l_{h})_{h \in I(\alpha)}}|}{\exp(\sigma r_{b}(\alpha) \rho)}
\frac{\bar{V}_{0}^{n_0}\bar{V}_{1}^{n_1}
\Pi_{h \in I(\alpha)}\bar{U}_{h}^{l_h}}{(n_{0}+n_{1}+\sum_{h \in I(\alpha)} l_{h} + \alpha)! }
\end{multline*}
is convergent. We denote also $G_{(\rho,\bar{V}_{0},\bar{V}_{1},(\bar{U}_{h})_{h \geq 0},\bar{W})}$ the vector space of
formal series
$$ \Psi(V_{0},V_{1},(U_{h})_{h \geq 0},W) = \sum_{\alpha \geq 0} \Psi_{\alpha}(V_{0},V_{1},(U_{h})_{h \in I(\alpha)})
\frac{W^{\alpha}}{\alpha!} $$
where $\Psi_{\alpha}(V_{0},V_{1},(U_{h})_{h \in I(\alpha)})$ belong to
$E_{\rho,\alpha,\bar{V}_{0},\bar{V}_{1},(\bar{U}_{h})_{h \in I(\alpha)}}$ for all $\alpha \geq 0$, such that the series
$$
||\Psi(V_{0},V_{1},(U_{h})_{h \geq 0},W)||_{(\rho,\bar{V}_{0},\bar{V}_{1},(\bar{U}_{h})_{h \geq 0},\bar{W})}
= \sum_{\alpha \geq 0} || \Psi_{\alpha}||_{\rho,\alpha,\bar{V}_{0},\bar{V}_{1},(\bar{U}_{h})_{h \in I(\alpha)}} \bar{W}^{\alpha}
$$
is convergent. One checks that the space $G_{(\rho,\bar{V}_{0},\bar{V}_{1},(\bar{U}_{h})_{h \geq 0},\bar{W})}$ equipped with
the norm \\ $||.||_{(\rho,\bar{V}_{0},\bar{V}_{1},(\bar{U}_{h})_{h \geq 0},\bar{W})}$ is a Banach space.
\end{defin}

In the next two propositions, we study norm estimates for linear operators acting on the Banach spaces 
$E_{\rho,\alpha,\bar{V}_{0},\bar{V}_{1},(\bar{U}_{h})_{h \in I(\alpha)}}$ constructed above.
\begin{prop} Consider a formal series
$$ b(V_{0},V_{1},(U_{h})_{h \in I(\alpha)}) = \sum_{n_{0},n_{1},l_{h} \geq 0, h \in I(\alpha)}
b_{n_{0},n_{1},(l_{h})_{h \in I(\alpha)}}
\frac{V_{0}^{n_0}}{n_{0}!}\frac{V_{1}^{n_1}}{n_{1}!}\Pi_{h \in I(\alpha)}\frac{U_{h}^{l_h}}{l_{h}!} $$
which is absolutely convergent on the polydisc
$D(0,\bar{V}_{0}) \times D(0,\bar{V}_{1}) \times_{h \in I(\alpha)} D(0,\bar{U}_{h})$. We use the notation
$$ |b|(\bar{V}_{0},\bar{V}_{1},(\bar{U}_{h})_{h \in I(\alpha)}) = \sum_{n_{0},n_{1},l_{h} \geq 0, h \in I(\alpha)}
|b_{n_{0},n_{1},(l_{h})_{h \in I(\alpha)}}|
\frac{\bar{V}_{0}^{n_0}}{n_{0}!}\frac{\bar{V}_{1}^{n_1}}{n_{1}!}\Pi_{h \in I(\alpha)}\frac{\bar{U}_{h}^{l_h}}{l_{h}!}.$$
Let $\Psi(V_{0},V_{1},(U_{h})_{h \in I(\alpha)})$ belonging to $E_{\rho,\alpha,\bar{V}_{0},\bar{V}_{1},(\bar{U}_{h})_{h \in I(\alpha)}}$.
Then, the following inequality
\begin{multline}
||b(V_{0},V_{1},(U_{h})_{h \in I(\alpha)})
\Psi(V_{0},V_{1},(U_{h})_{h \in I(\alpha)})||_{\rho,\alpha,\bar{V}_{0},\bar{V}_{1},(\bar{U}_{h})_{h \in I(\alpha)}}\\
 \leq |b|(\bar{V}_{0},\bar{V}_{1},(\bar{U}_{h})_{h \in I(\alpha)})
||\Psi(V_{0},V_{1},(U_{h})_{h \in I(\alpha)})||_{\rho,\alpha,\bar{V}_{0},\bar{V}_{1},(\bar{U}_{h})_{h \in I(\alpha)}}
\label{norm_product_b_psi_1<}
\end{multline}
holds.
\end{prop}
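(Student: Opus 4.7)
The plan is to expand $b \cdot \Psi$ via the Leibniz convolution on exponential generating series, apply the triangle inequality coefficient by coefficient, and reorganise the resulting double sum so that it factorises into $|b|(\bar{V}_{0},\bar{V}_{1},(\bar{U}_{h})_{h \in I(\alpha)})$ times $||\Psi||_{\rho,\alpha,\bar{V}_{0},\bar{V}_{1},(\bar{U}_{h})_{h \in I(\alpha)}}$. Explicitly, the coefficient of $V_{0}^{n_{0}}V_{1}^{n_{1}} \Pi_{h \in I(\alpha)} U_{h}^{l_{h}} / (n_{0}!n_{1}! \Pi_{h \in I(\alpha)} l_{h}!)$ in $b\Psi$ is the exponential-type convolution
\begin{equation*}
\sum \binom{n_{0}}{n_{0,1}}\binom{n_{1}}{n_{1,1}} \Pi_{h \in I(\alpha)} \binom{l_{h}}{l_{h,1}}\, b_{n_{0,1},n_{1,1},(l_{h,1})}\, \psi_{n_{0,2},n_{1,2},(l_{h,2})},
\end{equation*}
where the sum is over all splittings $n_{0,1}+n_{0,2}=n_{0}$, $n_{1,1}+n_{1,2}=n_{1}$, $l_{h,1}+l_{h,2}=l_{h}$. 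After passing to absolute values, swapping the order of summation so that the pairs $(n_{0,i},n_{1,i},(l_{h,i}))_{i=1,2}$ become the free summation variables, and factoring out the $\alpha$-dependent weight $\exp(\sigma r_{b}(\alpha)\rho)$, matters reduce to establishing a single pointwise inequality.

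Setting $N_{i} := n_{0,i}+n_{1,i}+\sum_{h \in I(\alpha)} l_{h,i}$ for $i=1,2$, that bound is
\begin{equation*}
\frac{\binom{n_{0}}{n_{0,1}}\binom{n_{1}}{n_{1,1}}\Pi_{h \in I(\alpha)} \binom{l_{h}}{l_{h,1}}}{(N_{1}+N_{2}+\alpha)!} \;\leq\; \frac{1}{\bigl(n_{0,1}!\, n_{1,1}!\, \Pi_{h \in I(\alpha)} l_{h,1}!\bigr)\,(N_{2}+\alpha)!}.
\end{equation*}
I would derive it from three elementary estimates: (i) $\binom{n_{0}}{n_{0,1}}\binom{n_{1}}{n_{1,1}}\Pi_{h} \binom{l_{h}}{l_{h,1}} \leq \binom{N_{1}+N_{2}}{N_{1}}$, obtained by interpreting both sides as counts of ordered splittings of $N_{1}+N_{2}$ labelled items into two classes of sizes $N_{1}$ and $N_{2}$; (ii) the multinomial bound $N_{1}! \geq n_{0,1}!\, n_{1,1}!\, \Pi_{h} l_{h,1}!$; and (iii) the termwise comparison $\prod_{i=1}^{\alpha}(N_{2}+i) \leq \prod_{i=1}^{\alpha}(N_{1}+N_{2}+i)$, which rewrites as $(N_{1}+N_{2})!\,(N_{2}+\alpha)! \leq N_{2}!\,(N_{1}+N_{2}+\alpha)!$. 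Multiplying (i)--(iii) yields exactly the required inequality after the cancellation of $N_{1}!N_{2}!$.

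Substituting this pointwise bound back into $||b\Psi||$ and recognising the resulting factorised sum as $|b|(\bar{V}_{0},\bar{V}_{1},(\bar{U}_{h})_{h \in I(\alpha)})$ multiplied by $||\Psi||_{\rho,\alpha,\bar{V}_{0},\bar{V}_{1},(\bar{U}_{h})_{h \in I(\alpha)}}$ closes the argument. I do not expect any genuine obstacle here: this is a textbook majorant-series estimate. The only delicate point is the combinatorial bookkeeping when re-indexing the double sum, so that the prefactor $1/(n_{0,1}!n_{1,1}!\Pi_{h}l_{h,1}!)$ lands on the $b$-side and the prefactor $1/(N_{2}+\alpha)!$ lands on the $\Psi$-side, matching the definitions of $|b|$ and the $E$-norm respectively.
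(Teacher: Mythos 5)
Your proposal is correct and follows essentially the same route as the paper: expand the product by the exponential Leibniz convolution, apply the triangle inequality, and reduce to the pointwise factorial bound, which you establish from the same three ingredients the paper uses in its Lemma 1 (the Vandermonde-type inequality $\binom{n_0}{n_{0,1}}\binom{n_1}{n_{1,1}}\Pi_h\binom{l_h}{l_{h,1}}\leq\binom{N_1+N_2}{N_1}$, the multinomial bound $N_1!\geq n_{0,1}!n_{1,1}!\Pi_h l_{h,1}!$, and the monotone factorial-ratio comparison $(a+\alpha)!/(b+\alpha)!\leq a!/b!$). Your pointwise inequality is an exact rearrangement of the paper's Lemma 1, so there is nothing to add.
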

\begin{proof} Let
$$ \Psi(V_{0},V_{1},(U_{h})_{h \in I(\alpha)}) = \sum_{n_{0},n_{1},l_{h} \geq 0, h \in I(\alpha)}
\psi_{n_{0},n_{1},(l_{h})_{h \in I(\alpha)}}
\frac{V_{0}^{n_0}}{n_{0}!}\frac{V_{1}^{n_1}}{n_{1}!}\Pi_{h \in I(\alpha)}\frac{U_{h}^{l_h}}{l_{h}!} $$
which belongs to $E_{\rho,\alpha,\bar{V}_{0},\bar{V}_{1},(\bar{U}_{h})_{h \in I(\alpha)}}$. By definition, we have that
\begin{multline*}
||b(V_{0},V_{1},(U_{h})_{h \in I(\alpha)})
\Psi(V_{0},V_{1},(U_{h})_{h \in I(\alpha)})||_{\rho,\alpha,\bar{V}_{0},\bar{V}_{1},(\bar{U}_{h})_{h \in I(\alpha)}}\\
= \sum_{n_{0},n_{1},l_{h} \geq 0, h \in I(\alpha)} |
\sum_{\scriptscriptstyle{\stackrel{n_{0,1}+n_{0,2}=n_{0},n_{1,1}+n_{1,2}=n_{1}}{l_{h,1}+l_{h,2}=l_{h},h \in I(\alpha)}}}
\frac{n_{0}!n_{1}!\Pi_{h \in I(\alpha)}l_{h}!}{n_{0,1}!n_{0,2}!n_{1,1}!n_{1,2}!\Pi_{h \in I(\alpha)}l_{h,1}!l_{h,2}!}\\
b_{n_{0,1},n_{1,1},(l_{h,1})_{h \in I(\alpha)}} \psi_{n_{0,2},n_{1,2},(l_{h,2})_{h \in I(\alpha)}}|
\frac{1}{\exp( \sigma r_{b}(\alpha) \rho )} \frac{\bar{V}_{0}^{n_0}\bar{V}_{1}^{n_1}
\Pi_{h \in I(\alpha)}\bar{U}_{h}^{l_h}}{(n_{0}+n_{1}+\sum_{h \in I(\alpha)} l_{h} + \alpha)! }.
\end{multline*}
We can give upper bounds for this latter expression
\begin{multline}
||b(V_{0},V_{1},(U_{h})_{h \in I(\alpha)})
\Psi(V_{0},V_{1},(U_{h})_{h \in I(\alpha)})||_{\rho,\alpha,\bar{V}_{0},\bar{V}_{1},(\bar{U}_{h})_{h \in I(\alpha)}}\\
\leq  \sum_{n_{0},n_{1},l_{h} \geq 0, h \in I(\alpha)}
\sum_{\scriptscriptstyle{\stackrel{n_{0,1}+n_{0,2}=n_{0},n_{1,1}+n_{1,2}=n_{1}}{l_{h,1}+l_{h,2}=l_{h},h \in I(\alpha)}}}
\left( \frac{n_{0}!n_{1}!\Pi_{h \in I(\alpha)}l_{h}!}{n_{0,2}!n_{1,2}!\Pi_{h \in I(\alpha)}l_{h,2}!} \right.\\
\left. \times \frac{(n_{0,2}+n_{1,2}+\sum_{h \in I(\alpha)} l_{h,2} + \alpha)!}{(n_{0}+n_{1}+\sum_{h \in I(\alpha)} l_{h} + \alpha)!}
\right) \frac{|b_{n_{0,1},n_{1,1},(l_{h,1})_{h \in I(\alpha)}}|}{n_{0,1}!n_{1,1}!\Pi_{h \in I(\alpha)} l_{h,1}!}
\bar{V}_{0}^{n_{0,1}}\bar{V}_{1}^{n_{1,1}}\Pi_{h \in I(\alpha)}\bar{U}_{h}^{l_{h,1}}\\
\times |\psi_{n_{0,2},n_{1,2},(l_{h,2})_{h \in I(\alpha)}}|\frac{1}{\exp( \sigma r_{b}(\alpha) \rho)}
\frac{\bar{V}_{0}^{n_{0,2}}\bar{V}_{1}^{n_{1,2}}\Pi_{h \in I(\alpha)}\bar{U}_{h}^{l_{h,2}}}{(n_{0,2}+n_{1,2}+
\sum_{h \in I(\alpha)} l_{h,2} + \alpha)!} \label{norm_product_b_psi_2<}
\end{multline}
\begin{lemma} For all integers $\alpha,n_{0},n_{1} \geq 0$, all $l_{h} \geq 0$, all $0 \leq n_{0,2} \leq n_{0}$,
all $0 \leq n_{1,2} \leq n_{1}$, all $0 \leq l_{h,2} \leq l_{h}$ for $h \in I(\alpha)$, we have that
\begin{equation}
\frac{n_{0}!n_{1}!\Pi_{h \in I(\alpha)}l_{h}!}{n_{0,2}!n_{1,2}!\Pi_{h \in I(\alpha)}l_{h,2}!}
\frac{(n_{0,2}+n_{1,2}+\sum_{h \in I(\alpha)} l_{h,2} + \alpha)!}{(n_{0}+n_{1}+\sum_{h \in I(\alpha)} l_{h} + \alpha)!} \leq 1.
\label{frac_factorial_ineq_1}
\end{equation}
\end{lemma}
\begin{proof} For any integers $a \leq b$ and $\alpha \geq 0$ one has
\begin{equation}
\frac{(a+\alpha)!}{(b+\alpha)!} \leq \frac{a!}{b!} \label{frac_factorial_ineq_ab}
\end{equation}
by using the factorization $(a+\alpha)!=(a+\alpha)(a+\alpha-1)\cdots(a+1)a!$. Therefore, one gets the inequality
\begin{multline}
\frac{n_{0}!n_{1}!\Pi_{h \in I(\alpha)}l_{h}!}{n_{0,2}!n_{1,2}!\Pi_{h \in I(\alpha)}l_{h,2}!}
\frac{(n_{0,2}+n_{1,2}+\sum_{h \in I(\alpha)} l_{h,2} + \alpha)!}{(n_{0}+n_{1}+\sum_{h \in I(\alpha)} l_{h} + \alpha)!} \\
\leq \frac{n_{0}!n_{1}!\Pi_{h \in I(\alpha)}l_{h}!}{n_{0,2}!n_{1,2}!\Pi_{h \in I(\alpha)}l_{h,2}!}
\frac{(n_{0,2}+n_{1,2}+\sum_{h \in I(\alpha)} l_{h,2})!}{(n_{0}+n_{1}+\sum_{h \in I(\alpha)} l_{h})!} \label{frac_factorial_ineq_2}
\end{multline}
Now, from the identity $(A+B)^{n_{0}+n_{1}+\sum_{h \in I(\alpha)} l_{h}} = (A+B)^{n_0}(A+B)^{n_1}\times
\Pi_{h \in I(\alpha)}(A+B)^{l_h}$ and the binomial formula, we deduce that
\begin{multline*}
\frac{n_{0}!n_{1}!\Pi_{h \in I(\alpha)}l_{h}!}{n_{0,1}!n_{0,2}!n_{1,1}!n_{1,2}! \Pi_{h \in I(\alpha)} l_{h,1}! l_{h,2}!} \\
\leq \frac{(n_{0}+n_{1}+\sum_{h \in I(\alpha)} l_{h})!}{(n_{0,1} + n_{1,1} + \sum_{h \in I(\alpha)} l_{h,1})!
(n_{0,2} + n_{1,2} + \sum_{h \in I(\alpha)} l_{h,2})!}
\end{multline*}
for all $n_{0,1}+n_{0,2}=n_{0}$, $n_{1,1}+n_{1,2}=n_{1}$, $l_{h,1}+l_{h,2}=l_{h}$. Therefore, we deduce that
\begin{multline}
\frac{n_{0}!n_{1}!\Pi_{h \in I(\alpha)}l_{h}!}{n_{0,2}!n_{1,2}!\Pi_{h \in I(\alpha)}l_{h,2}!}
\frac{(n_{0,2}+n_{1,2}+\sum_{h \in I(\alpha)} l_{h,2} + \alpha)!}{(n_{0}+n_{1}+\sum_{h \in I(\alpha)} l_{h} + \alpha)!} \\
\leq \frac{n_{0,1}!n_{1,1}!\Pi_{h \in I(\alpha)} l_{h,1}!}{(n_{0,1}+n_{1,1}+\sum_{h \in I(\alpha)} l_{h,1})!} \leq 1, \label{frac_factorial_ineq_3}
\end{multline}
and the lemma follows from the inequalities (\ref{frac_factorial_ineq_2}), (\ref{frac_factorial_ineq_3}).
\end{proof}
Finally, the inequality (\ref{norm_product_b_psi_1<}) follows from (\ref{norm_product_b_psi_2<}) and
(\ref{frac_factorial_ineq_1}).
\end{proof}

\begin{prop} Let $\alpha,\alpha'$ be integers such that $\alpha' \geq 0$ and $\alpha'+1 < \alpha$. Let $j \in I(\alpha')$ and $k \in \{0,1\}$.
We have that
\begin{multline}
||\partial_{U_j}\Psi(V_{0},V_{1},(U_{h})_{h \in I(\alpha')})||_{\rho,\alpha,\bar{V}_{0},\bar{V}_{1},(\bar{U}_{h})_{h \in I(\alpha)}}\\
\leq \frac{ \exp( -\sigma \rho \frac{\alpha - \alpha'}{(\alpha+1)^{b}} ) }{\bar{U}_{j} \Pi_{l=1}^{\alpha - \alpha'-1}(\alpha - l +1)}
||\Psi(V_{0},V_{1},(U_{h})_{h \in I(\alpha')})||_{\rho,\alpha',\bar{V}_{0},\bar{V}_{1},(\bar{U}_{h})_{h \in I(\alpha')}},
\label{norm_partial_Uj_Psi_1<}
\end{multline}
\begin{multline}
||\partial_{V_{k}}\Psi(V_{0},V_{1},(U_{h})_{h \in I(\alpha')})||_{\rho,\alpha,\bar{V}_{0},\bar{V}_{1},(\bar{U}_{h})_{h \in I(\alpha)}}\\
\leq \frac{ \exp( -\sigma \rho \frac{\alpha - \alpha'}{(\alpha+1)^{b}} ) }{\bar{V}_{k} \Pi_{l=1}^{\alpha - \alpha'-1}(\alpha - l +1)}
||\Psi(V_{0},V_{1},(U_{h})_{h \in I(\alpha')})||_{\rho,\alpha',\bar{V}_{0},\bar{V}_{1},(\bar{U}_{h})_{h \in I(\alpha')}},
\label{norm_partial_Vk_Psi<}
\end{multline}
and
\begin{multline}
||\Psi(V_{0},V_{1},(U_{h})_{h \in I(\alpha')})||_{\rho,\alpha,\bar{V}_{0},\bar{V}_{1},(\bar{U}_{h})_{h \in I(\alpha)}}\\
\leq \frac{ \exp( -\sigma \rho \frac{\alpha - \alpha'}{(\alpha+1)^{b}} ) }{\Pi_{l=1}^{\alpha - \alpha'}(\alpha - l +1)}
||\Psi(V_{0},V_{1},(U_{h})_{h \in I(\alpha')})||_{\rho,\alpha',\bar{V}_{0},\bar{V}_{1},(\bar{U}_{h})_{h \in I(\alpha')}}
\label{norm_alpha_Psi_alpha'<}
\end{multline}
for all $\Psi(V_{0},V_{1},(U_{h})_{h \in I(\alpha')}) \in E_{\rho,\alpha',\bar{V}_{0},\bar{V}_{1},(\bar{U}_{h})_{h \in I(\alpha')}}$.
\end{prop}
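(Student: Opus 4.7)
The plan is to prove the three inequalities by a direct coefficient-by-coefficient comparison of the defining series of the norms on $E_{\rho,\alpha,\bar{V}_{0},\bar{V}_{1},(\bar{U}_{h})_{h \in I(\alpha)}}$ and $E_{\rho,\alpha',\bar{V}_{0},\bar{V}_{1},(\bar{U}_{h})_{h \in I(\alpha')}}$. The crucial observation is that since $\Psi$ depends only on $V_{0},V_{1},(U_{h})_{h \in I(\alpha')}$, when one interprets $\Psi$ (or $\partial_{U_{j}}\Psi$, or $\partial_{V_{k}}\Psi$) as a series in the extended variables $(U_{h})_{h \in I(\alpha)}$, the only non-vanishing coefficients $\psi_{n_{0},n_{1},(l_{h})_{h \in I(\alpha)}}$ are those with $l_{h}=0$ for all $h \in I(\alpha) \setminus I(\alpha')$. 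So each of the three left-hand norms is in fact a sum indexed only by $(n_{0},n_{1},(l_{h})_{h \in I(\alpha')})$, and we need only compare the per-term weights.

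First I would write the left-hand side of (\ref{norm_partial_Uj_Psi_1<}) explicitly: after the standard translation of indices $l_{j} \mapsto l_{j}-1$ coming from $\partial_{U_{j}}$, each term carries an extra factor $1/\bar{U}_{j}$ compared with the norm of $\Psi$ at level $\alpha'$, its exponential weight changes from $\exp(-\sigma r_{b}(\alpha')\rho)$ to $\exp(-\sigma r_{b}(\alpha)\rho)$, and the denominator factorial changes from $(n_{0}+n_{1}+\sum_{h \in I(\alpha')}l_{h}+\alpha')!$ to $(n_{0}+n_{1}+\sum_{h \in I(\alpha')}l_{h}-1+\alpha)!$. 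The same rewriting applies to (\ref{norm_partial_Vk_Psi<}) (with $\bar{V}_{k}$ in place of $\bar{U}_{j}$) and to (\ref{norm_alpha_Psi_alpha'<}) (no $1/\bar{U}_{j}$ factor and denominator $(n_{0}+n_{1}+\sum_{h \in I(\alpha')}l_{h}+\alpha)!$).

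Next, I would dominate the exponential ratio. By definition of $r_{b}$,
\[
r_{b}(\alpha)-r_{b}(\alpha') \;=\; \sum_{n=\alpha'+1}^{\alpha}\frac{1}{(n+1)^{b}} \;\geq\; \frac{\alpha-\alpha'}{(\alpha+1)^{b}},
\]
which yields the factor $\exp(-\sigma\rho(\alpha-\alpha')/(\alpha+1)^{b})$ common to all three bounds. For the factorial ratio I would use the estimate (\ref{frac_factorial_ineq_ab}) from Lemma~1 of the preceding proof: writing $N=n_{0}+n_{1}+\sum_{h \in I(\alpha')}l_{h}$, in the derivative cases one has $N\geq 1$, hence
\[
\frac{(N+\alpha')!}{(N-1+\alpha)!} \;=\; \frac{1}{\prod_{l=1}^{\alpha-\alpha'-1}(N+\alpha-l)} \;\leq\; \frac{1}{\prod_{l=1}^{\alpha-\alpha'-1}(\alpha-l+1)},
\]
while in the inclusion case $N\geq 0$ is enough to get the analogous bound with $\alpha-\alpha'$ factors in the denominator product. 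Multiplying the three factors (exponential, factorial, and $1/\bar{U}_{j}$ or $1/\bar{V}_{k}$ when present) and summing over $(n_{0},n_{1},(l_{h})_{h \in I(\alpha')})$ reassembles $\|\Psi\|_{\rho,\alpha',\bar{V}_{0},\bar{V}_{1},(\bar{U}_{h})_{h \in I(\alpha')}}$, delivering the required inequalities.

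I do not expect any serious obstacle: the mechanics are entirely combinatorial and run along the same lines as the proof of Proposition~5. The only point that requires care is the bookkeeping of the factorial ratio in the derivative cases, where the shift in indices forces $N\geq 1$ and therefore costs exactly one factor in the denominator product compared with the pure inclusion bound (\ref{norm_alpha_Psi_alpha'<}); this is precisely what explains the $\prod_{l=1}^{\alpha-\alpha'-1}$ instead of $\prod_{l=1}^{\alpha-\alpha'}$ in the derivative estimates.
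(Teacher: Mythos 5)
Your proposal is correct and follows essentially the same route as the paper's own proof: a term-by-term comparison of the two norms after the index shift induced by $\partial_{U_j}$ (resp. $\partial_{V_k}$), the bound $r_{b}(\alpha)-r_{b}(\alpha')\geq(\alpha-\alpha')/(\alpha+1)^{b}$ for the exponential weight, and the factorial-ratio estimate $(a+1+\alpha')!/(a+\alpha)!\leq 1/\Pi_{l=1}^{\alpha-\alpha'-1}(\alpha-l+1)$ coming from (\ref{frac_factorial_ineq_ab}). Your remark about why the derivative cases lose one factor in the denominator product relative to (\ref{norm_alpha_Psi_alpha'<}) matches the paper's Lemma 3 exactly.
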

\begin{proof} Let $\Psi(V_{0},V_{1},(U_{h})_{h \in I(\alpha')}) \in E_{\rho,\alpha',\bar{V}_{0},\bar{V}_{1},(\bar{U}_{h})_{h \in I(\alpha')}}$ that
we write in the form
\begin{multline*}
\Psi(V_{0},V_{1},(U_{h})_{h \in I(\alpha')}) \\
= \sum_{n_{0},n_{1},l_{h} \geq 0, h \in I(\alpha)} \psi_{n_{0},n_{1},(l_{h})_{h \in I(\alpha')}}
\Pi_{h \in I(\alpha) \setminus I(\alpha')} \delta_{0,l_{h}}
\frac{V_{0}^{n_0}}{n_{0}!}\frac{V_{1}^{n_1}}{n_{1}!}\Pi_{h \in I(\alpha)}\frac{U_{h}^{l_h}}{l_{h}!}.
\end{multline*}
By definition, we get that
\begin{multline*}
||\partial_{U_j}\Psi(V_{0},V_{1},(U_{h})_{h \in I(\alpha')})||_{\rho,\alpha,\bar{V}_{0},\bar{V}_{1},(\bar{U}_{h})_{h \in I(\alpha)}}\\
= \sum_{n_{0},n_{1},l_{h} \geq 0, h \in I(\alpha)} \frac{|\psi_{n_{0},n_{1},(l_{h})_{h \in I(\alpha'),h \neq j},l_{j}+1} \Pi_{h \in
I(\alpha) \setminus I(\alpha')} \delta_{0,l_h}|}{\exp(\sigma r_{b}(\alpha) \rho)} \frac{\bar{V}_{0}^{n_0}\bar{V}_{1}^{n_1}
\Pi_{h \in I(\alpha)}\bar{U}_{h}^{l_h}}{ (n_{0}+n_{1}+\sum_{h \in I(\alpha)} l_{h} + \alpha)! }.
\end{multline*}
We give upper bounds for this latter expression,
\begin{multline}
||\partial_{U_j}\Psi(V_{0},V_{1},(U_{h})_{h \in I(\alpha')})||_{\rho,\alpha,\bar{V}_{0},\bar{V}_{1},(\bar{U}_{h})_{h \in I(\alpha)}} \\
=  \sum_{n_{0},n_{1},l_{h} \geq 0, h \in I(\alpha')} \left(
\frac{(n_{0}+n_{1}+\sum_{h \in I(\alpha'),h \neq j} l_{h} + l_{j}+1+\alpha')!}{(n_{0}+n_{1}+\sum_{h \in I(\alpha')} l_{h} + \alpha)!}
\frac{1}{\bar{U}_{j}\exp(\sigma \rho( r_{b}(\alpha) - r_{b}(\alpha') ) )} \right)\\
\times \frac{|\psi_{n_{0},n_{1},(l_{h})_{h \in I(\alpha'),h \neq j},l_{j}+1}|}{\exp(\sigma r_{b}(\alpha') \rho)} \frac{\bar{V}_{0}^{n_0}\bar{V}_{1}^{n_1}
\Pi_{h \in I(\alpha'),h \neq j}\bar{U}_{h}^{l_h} \bar{U}_{j}^{l_{j}+1} }{ (n_{0}+n_{1}+\sum_{h \in I(\alpha'),h \neq j} l_{h} + l_{j} +1 + \alpha')! }
\label{norm_partial_Uj_Psi_2<}
\end{multline}
\begin{lemma} We have
\begin{multline}
\frac{(n_{0}+n_{1}+\sum_{h \in I(\alpha'),h \neq j} l_{h} + l_{j}+1+\alpha')!}{(n_{0}+n_{1}+\sum_{h \in I(\alpha')} l_{h} + \alpha)!}
\frac{1}{\exp(\sigma \rho( r_{b}(\alpha) - r_{b}(\alpha') ) )} \\
\leq \frac{ \exp( -\sigma \rho \frac{\alpha - \alpha'}{(\alpha+1)^{b}} ) }{\Pi_{l=1}^{\alpha - \alpha'-1}(\alpha - l +1)}
\label{ineq_frac_factorial_exponential}
\end{multline}
\end{lemma}
\begin{proof} We notice that
$$ r_{b}(\alpha) - r_{b}(\alpha') = \sum_{n=\alpha'+1}^{\alpha} \frac{1}{(n+1)^{b}} \geq \frac{ \alpha - \alpha' }{(\alpha + 1)^{b}} $$
and, with the help of (\ref{frac_factorial_ineq_ab}), that for all integers $a \geq 0$,
$$ \frac{(a+1+\alpha')!}{(a+\alpha)!} \leq \frac{1}{\Pi_{l=1}^{\alpha - \alpha'-1}(\alpha - l +1)}.$$
The lemma follows.
\end{proof}
We get that the inequality (\ref{norm_partial_Uj_Psi_1<})  follows from (\ref{norm_partial_Uj_Psi_2<}) together with
(\ref{ineq_frac_factorial_exponential}). Finally, using similar arguments, one gets also the inequalities
(\ref{norm_partial_Vk_Psi<}) and (\ref{norm_alpha_Psi_alpha'<}).
\end{proof}

In the next two propositions, we study norm estimates for linear operators acting on the Banach space $G_{(\rho,\bar{V}_{0},\bar{V}_{1},(\bar{U}_{h})_{h \geq 0},\bar{W})}$.
\begin{prop} Let a formal series $b(V_{0},V_{1},U_{0},W) \in \mathbb{C}[[V_{0},V_{1},U_{0},W]]$ be absolutely convergent on
the polydisc $D(0,\bar{V}_{0}) \times D(0,\bar{V}_{1}) \times D(0,\bar{U}_{0}) \times D(0,\bar{W})$. Let
$\Psi(V_{0},V_{1},(U_{h})_{h \geq 0},W)$ belonging to $G_{(\rho,\bar{V}_{0},\bar{V}_{1},(\bar{U}_{h})_{h \geq 0},\bar{W})}$. 
Then, the product $b(V_{0},V_{1},U_{0},W)\Psi(V_{0},V_{1},(U_{h})_{h \geq 0},W)$ belongs to
$G_{(\rho,\bar{V}_{0},\bar{V}_{1},(\bar{U}_{h})_{h \geq 0},\bar{W})}$ and the
following inequality
\begin{multline}
||b(V_{0},V_{1},U_{0},W)\Psi(V_{0},V_{1},(U_{h})_{h \geq 0},W)||_{(\rho,\bar{V}_{0},\bar{V}_{1},(\bar{U}_{h})_{h \geq 0},\bar{W})} \\
\leq |b|(\bar{V}_{0},\bar{V}_{1},\bar{U}_{0},\bar{W})||\Psi(V_{0},V_{1},(U_{h})_{h \geq 0},W)||_{(\rho,\bar{V}_{0},\bar{V}_{1},(\bar{U}_{h})_{h \geq 0},\bar{W})} \label{norm_b_Psi_W<|b|_norm_Psi_W}
\end{multline}
holds.
\end{prop}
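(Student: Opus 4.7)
The plan is to expand both factors as power series in $W$, apply the Cauchy product, and estimate the resulting norm of each $W^{\alpha}$--coefficient by invoking the previously established propositions on the spaces $E_{\rho,\alpha,\bar{V}_{0},\bar{V}_{1},(\bar{U}_{h})_{h \in I(\alpha)}}$. Concretely, I would write
$$ b(V_{0},V_{1},U_{0},W) = \sum_{\alpha \geq 0} b_{\alpha}(V_{0},V_{1},U_{0}) \frac{W^{\alpha}}{\alpha!}, \qquad \Psi = \sum_{\alpha \geq 0} \Psi_{\alpha} \frac{W^{\alpha}}{\alpha!}, $$
so that the coefficient of $W^{\alpha}/\alpha!$ in $b \Psi$ is $\sum_{\alpha_{1}+\alpha_{2}=\alpha} \binom{\alpha}{\alpha_{1}} b_{\alpha_{1}} \Psi_{\alpha_{2}}$. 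By the triangle inequality, the $G$-norm of $b\Psi$ is bounded by
$$ \sum_{\alpha \geq 0} \bar{W}^{\alpha} \sum_{\alpha_{1}+\alpha_{2}=\alpha} \frac{\alpha!}{\alpha_{1}! \alpha_{2}!} \, \| b_{\alpha_{1}} \Psi_{\alpha_{2}} \|_{\rho,\alpha,\bar{V}_{0},\bar{V}_{1},(\bar{U}_{h})_{h \in I(\alpha)}}. $$

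The key step is to bound each factor $\| b_{\alpha_{1}} \Psi_{\alpha_{2}} \|_{\rho,\alpha,\cdot}$. Since $b_{\alpha_{1}}(V_{0},V_{1},U_{0})$ has no dependence on $U_{h}$ for $h \geq 1$, Proposition 5 applies with trivial extension and gives
$$ \| b_{\alpha_{1}} \Psi_{\alpha_{2}} \|_{\rho,\alpha,\cdot} \leq |b_{\alpha_{1}}|(\bar{V}_{0},\bar{V}_{1},\bar{U}_{0}) \, \| \Psi_{\alpha_{2}} \|_{\rho,\alpha,\cdot}. $$
Next, inequality (\ref{norm_alpha_Psi_alpha'<}) in Proposition 6 (applied with $\alpha'=\alpha_{2}$, plus the trivial case $\alpha_{2}=\alpha$, and the case $\alpha_{2}=\alpha-1$ which I would handle by an analogous direct computation) controls the cost of promoting $\Psi_{\alpha_{2}}$ from $E_{\rho,\alpha_{2},\cdot}$ to $E_{\rho,\alpha,\cdot}$:
$$ \| \Psi_{\alpha_{2}} \|_{\rho,\alpha,\cdot} \leq \frac{\exp\!\left(-\sigma \rho \frac{\alpha-\alpha_{2}}{(\alpha+1)^{b}}\right)}{\prod_{l=1}^{\alpha-\alpha_{2}}(\alpha-l+1)} \, \| \Psi_{\alpha_{2}} \|_{\rho,\alpha_{2},\cdot}. $$

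The decisive observation is then that the denominator telescopes:
$$ \prod_{l=1}^{\alpha-\alpha_{2}}(\alpha-l+1) = \alpha(\alpha-1)\cdots(\alpha_{2}+1) = \frac{\alpha!}{\alpha_{2}!}, $$
so the combinatorial factor $\alpha!/(\alpha_{1}!\alpha_{2}!)$ from the Cauchy product cancels exactly against $\alpha_{2}!/\alpha!$, leaving the prefactor $1/\alpha_{1}!$. Dropping the harmless exponential decay $\exp(-\sigma\rho(\alpha-\alpha_{2})/(\alpha+1)^{b}) \leq 1$, the double sum factors:
$$ \|b\Psi\|_{(\rho,\ldots,\bar{W})} \leq \sum_{\alpha_{1},\alpha_{2} \geq 0} \frac{|b_{\alpha_{1}}|(\bar{V}_{0},\bar{V}_{1},\bar{U}_{0})}{\alpha_{1}!} \bar{W}^{\alpha_{1}} \cdot \| \Psi_{\alpha_{2}} \|_{\rho,\alpha_{2},\cdot} \bar{W}^{\alpha_{2}} = |b|(\bar{V}_{0},\bar{V}_{1},\bar{U}_{0},\bar{W}) \, \|\Psi\|_{(\rho,\ldots,\bar{W})}, $$
which is precisely (\ref{norm_b_Psi_W<|b|_norm_Psi_W}). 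The only genuine obstacle is the bookkeeping surrounding the boundary cases $\alpha_{1}\in\{0,1\}$ (not strictly inside the hypothesis $\alpha'+1<\alpha$ of Proposition 6), but both are immediate: for $\alpha_{1}=0$ no lifting is required, and for $\alpha_{1}=1$ the same factorial-cancellation argument works by direct inspection of the definition of $\|\cdot\|_{\rho,\alpha,\cdot}$.
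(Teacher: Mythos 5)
Your proof is correct and follows essentially the same route as the paper: expand in $W$, bound each Cauchy-product term by combining the $E_{\rho,\alpha}$ product estimate (Proposition 5) with the cost $\alpha_{2}!/\alpha!$ of promoting $\Psi_{\alpha_{2}}$ from the $\alpha_{2}$-norm to the $\alpha$-norm, and observe that this factor cancels the binomial coefficient so the double sum factorizes. The paper packages the promotion step as its own Lemma 4 (a factorial inequality valid for all $\alpha_{2}\leq\alpha$, so no boundary cases arise) rather than citing Proposition 6, but this is only a difference of bookkeeping, and you correctly dispose of the cases $\alpha-\alpha_{2}\in\{0,1\}$.
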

\begin{proof} Let
\begin{multline*}
b(V_{0},V_{1},U_{0},W) = \sum_{\alpha \geq 0} b_{\alpha}(V_{0},V_{1},U_{0}) \frac{W^{\alpha}}{\alpha !},\\
\Psi(V_{0},V_{1},(U_{h})_{h \geq 0},W) = \sum_{\alpha \geq 0} \Psi_{\alpha}(V_{0},V_{1},(U_{h})_{h \in I(\alpha)})
\frac{W^{\alpha}}{\alpha !}.
\end{multline*}
By definition, we get
\begin{multline}
||b(V_{0},V_{1},U_{0},W)\Psi(V_{0},V_{1},(U_{h})_{h \geq 0},W)||_{(\rho,\bar{V}_{0},\bar{V}_{1},(\bar{U}_{h})_{h \geq 0},\bar{W})}\\
= \sum_{\alpha \geq 0} || \sum_{\alpha_{1} + \alpha_{2}= \alpha} \alpha !
\frac{b_{\alpha_{1}}(V_{0},V_{1},U_{0})}{\alpha_{1}!}\frac{\Psi_{\alpha_2}(V_{0},V_{1},(U_{h})_{h \in I(\alpha_{2})})}{\alpha_{2}!}||_{\rho,
\alpha,\bar{V}_{0},\bar{V}_{1},(\bar{U}_{h})_{h \in I(\alpha)}} \bar{W}^{\alpha}. \label{norm_b_Psi_W=}
\end{multline}
\begin{lemma} We have
\begin{multline}
||b_{\alpha_{1}}(V_{0},V_{1},U_{0})\Psi_{\alpha_2}(V_{0},V_{1},(U_{h})_{h \in I(\alpha_{2})})||_{\rho,
\alpha,\bar{V}_{0},\bar{V}_{1},(\bar{U}_{h})_{h \in I(\alpha)}} \\
\leq \frac{\alpha_{2}!}{\alpha !} |b_{\alpha_1}|(\bar{V}_{0},\bar{V}_{1},\bar{U}_{0})
||\Psi_{\alpha_2}(V_{0},V_{1},(U_{h})_{h \in I(\alpha_{2})})||_{\rho,\alpha_{2},\bar{V}_{0},\bar{V}_{1},
(\bar{U}_{h})_{h \in I(\alpha_{2})}}.
\label{norm_balpha1_Psialpha2_1<}
\end{multline}
\end{lemma}
\begin{proof} We can write
$$
b_{\alpha_1}(V_{0},V_{1},U_{0}) = \sum_{n_{0},n_{1},l_{h} \geq 0, h \in I(\alpha)} b_{\alpha_{1},n_{0},n_{1},l_{0}}
\Pi_{h \in I(\alpha) \setminus \{ 0 \}} \delta_{0,l_h}
\frac{V_{0}^{n_0}}{n_{0}!}\frac{V_{1}^{n_1}}{n_{1}!}\Pi_{h \in I(\alpha)}\frac{U_{h}^{l_h}}{l_{h}!}
$$
and
\begin{multline*}
\Psi_{\alpha_2}(V_{0},V_{1},(U_{h})_{h \in I(\alpha_{2})}) \\
= \sum_{n_{0},n_{1},l_{h} \geq 0, h \in I(\alpha)} \psi_{\alpha_{2},n_{0},n_{1},(l_{h})_{h \in I(\alpha_{2})}}
\Pi_{h \in I(\alpha) \setminus I(\alpha_{2})} \delta_{0,l_h}
\frac{V_{0}^{n_0}}{n_{0}!}\frac{V_{1}^{n_1}}{n_{1}!}\Pi_{h \in I(\alpha)}\frac{U_{h}^{l_h}}{l_{h}!}.
\end{multline*}
By remembering (\ref{norm_product_b_psi_1<}) of Proposition 5, we deduce that
\begin{multline}
||b_{\alpha_{1}}(V_{0},V_{1},U_{0})\Psi_{\alpha_2}(V_{0},V_{1},(U_{h})_{h \in I(\alpha_{2})})||_{\rho,
\alpha,\bar{V}_{0},\bar{V}_{1},(\bar{U}_{h})_{h \in I(\alpha)}} \leq \\
|b_{\alpha_1}|(\bar{V}_{0},\bar{V}_{1},\bar{U}_{0}) (\sum_{n_{0},n_{1},l_{h} \geq 0, h \in I(\alpha_{2})}
\frac{|\psi_{\alpha_{2},n_{0},n_{1},(l_{h})_{h \in I(\alpha_{2})}}|}{\exp( \sigma r_{b}(\alpha) \rho )}
\frac{\bar{V}_{0}^{n_0}\bar{V}_{1}^{n_1}
\Pi_{h \in I(\alpha_{2})}\bar{U}_{h}^{l_h}}{(n_{0}+n_{1}+\sum_{h \in I(\alpha_{2})} l_{h} + \alpha)! }).
\label{norm_balpha1_Psialpha2_2<}
\end{multline}
\begin{lemma} We have
\begin{equation}
\frac{1}{(n_{0}+n_{1}+\sum_{h \in I(\alpha_{2})} l_{h} + \alpha)!} \leq \frac{ \alpha_{2}! }{ \alpha !}
\frac{1}{(n_{0} + n_{1} + \sum_{h \in I(\alpha_{2})} l_{h} + \alpha_{2})!}. \label{frac_factorial_alpha_alpha2<}
\end{equation}
\end{lemma}
\begin{proof} We write
$$ \frac{1}{(n_{0}+n_{1}+\sum_{h \in I(\alpha_{2})} l_{h} + \alpha)!} =
\frac{ (n_{0}+n_{1}+\sum_{h \in I(\alpha_{2})} l_{h} + \alpha_{2})!}{(n_{0}+n_{1}+\sum_{h \in I(\alpha_{2})} l_{h} + \alpha)!}
\frac{1}{(n_{0}+n_{1}+\sum_{h \in I(\alpha_{2})} l_{h} + \alpha_{2})!} $$
and we use the inequality
$$ \frac{(a+\alpha_{2})!}{(a+\alpha)!} \leq \frac{\alpha_{2}!}{\alpha !} $$
for all $\alpha = \alpha_{1}+\alpha_{2}$ and all $a \in \mathbb{N}$ which follows from (\ref{frac_factorial_ineq_ab}). This yields the
lemma.
\end{proof}
Using the fact that $\exp( \sigma r_{b}(\alpha) \rho ) \geq \exp( \sigma r_{b}(\alpha_{2}) \rho )$ and gathering the inequalities
(\ref{norm_balpha1_Psialpha2_2<}) and (\ref{frac_factorial_alpha_alpha2<}) yields (\ref{norm_balpha1_Psialpha2_1<}). 
\end{proof}
Finally, using (\ref{norm_b_Psi_W=}) with (\ref{norm_balpha1_Psialpha2_1<}), one gets
\begin{multline}
||b(V_{0},V_{1},U_{0},W)\Psi(V_{0},V_{1},(U_{h})_{h \geq 0},W)||_{(\rho,\bar{V}_{0},\bar{V}_{1},(\bar{U}_{h})_{h \geq 0},\bar{W})}\\
\leq \sum_{\alpha \geq 0} (\sum_{\alpha_{1} + \alpha_{2} = \alpha} \frac{|b_{\alpha_1}|(\bar{V}_{0},\bar{V}_{1},\bar{U}_{0})}{\alpha_{1}!}
||\Psi_{\alpha_2}(V_{0},V_{1},(U_{h})_{h \in I(\alpha_{2})})||_{\rho,\alpha_{2},\bar{V}_{0},\bar{V}_{1},(\bar{U}_{h})_{h \in I(\alpha_{2})}})
\bar{W}^{\alpha}
\end{multline}
from which the inequality (\ref{norm_b_Psi_W<|b|_norm_Psi_W}) follows.
\end{proof}

\begin{prop}
1) Let $S,k \geq 0$ be integers such that
\begin{equation}
S \geq k+1 + \max(b(d_{1,k}+2)+3,d+1+b(d+d_{1,k}+1)). \label{relation_S_k_1}
\end{equation}
Then, there exists a constant $C_{8.1}>0$ (which is independent of $\rho >1$) such that
\begin{multline}
||B_{1,k}(V_{0},V_{1},U_{0},W)\partial_{W}^{-S+k}
\mathbb{D}_{\mathbf{A}}\Psi(V_{0},V_{1},(U_{h})_{h \geq 0},W)||_{(\rho,\bar{V}_{0},\bar{V}_{1},(\bar{U}_{h})_{h \geq 0},\bar{W})}
\\
\leq C_{8.1}\bar{W}^{S-k}
||\Psi(V_{0},V_{1},(U_{h})_{h \geq 0},W)||_{(\rho,\bar{V}_{0},\bar{V}_{1},(\bar{U}_{h})_{h \geq 0},\bar{W})}
\label{norm_intW_DA_Psi_W<norm_Psi_W}
\end{multline}
for all $\Psi(V_{0},V_{1},(U_{h})_{h \geq 0},W) \in G_{(\rho,\bar{V}_{0},\bar{V}_{1},(\bar{U}_{h})_{h \geq 0},\bar{W})}$.\\
2) Let $S,k \geq 0$ be integers such that
\begin{equation}
S \geq k+3+b(2+d_{2,k}). \label{relation_S_k_2}
\end{equation}
Then, there exists a constant $C_{8.2}>0$ (which is independent of $\rho >1$) such that
\begin{multline}
||B_{2,k}(V_{0},V_{1},U_{0},W)\partial_{W}^{-S+k}
\mathbb{D}_{\mathbf{B}}\Psi(V_{0},V_{1},(U_{h})_{h \geq 0},W)||_{(\rho,\bar{V}_{0},\bar{V}_{1},(\bar{U}_{h})_{h \geq 0},\bar{W})}
\\
\leq C_{8.2}\bar{W}^{S-k}
||\Psi(V_{0},V_{1},(U_{h})_{h \geq 0},W)||_{(\rho,\bar{V}_{0},\bar{V}_{1},(\bar{U}_{h})_{h \geq 0},\bar{W})}
\label{norm_intW_DB_Psi_W<norm_Psi_W}
\end{multline}
for all $\Psi(V_{0},V_{1},(U_{h})_{h \geq 0},W) \in G_{(\rho,\bar{V}_{0},\bar{V}_{1},(\bar{U}_{h})_{h \geq 0},\bar{W})}$.
\end{prop}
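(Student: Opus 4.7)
The plan is to reduce both parts to the continuity estimates already established: Proposition~6 for multiplication by a scalar series in $G$, Proposition~5 for multiplication in $E_{\rho,\alpha,\ldots}$, and Proposition~7 for change of $\alpha$-index (which produces an exponential decay in $\rho$ together with a factorial denominator). Parts (1) and (2) share the same skeleton; only the intermediate majorant of $\mathbf{A}_{j,\alpha+1}$ versus the simpler $\mathbf{B}_{j,\alpha+1}=(j+1)\nu U_{j+1}$ differs, and this accounts for the weaker hypothesis on $S-k$ in part (2).

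First I would unwind $\mathbb{D}_{\mathbf{A}}\Psi=\sum_{\alpha}\bigl(\sum_{j\in I(\alpha)}\mathbf{A}_{j,\alpha+1}\partial_{U_{j}}\Psi_{\alpha}\bigr)W^{\alpha}/\alpha!$, apply $\partial_{W}^{-(S-k)}$ (using $\partial_{W}^{-1}(W^{\alpha}/\alpha!)=W^{\alpha+1}/(\alpha+1)!$) and reindex $\beta=\alpha+S-k$, obtaining
\[
\partial_{W}^{-(S-k)}\mathbb{D}_{\mathbf{A}}\Psi=\sum_{\beta\geq S-k}\Bigl(\sum_{j\in I(\alpha')}\mathbf{A}_{j,\alpha'+1}\,\partial_{U_{j}}\Psi_{\alpha'}\Bigr)\frac{W^{\beta}}{\beta!},\qquad \alpha':=\beta-(S-k).
\]
Proposition~6 peels off the factor $B_{1,k}$ at the cost of the finite prefactor $|B_{1,k}|(\bar V_{0},\bar V_{1},\bar U_{0},\bar W)$ (finite because $b_{1,k}$ is polynomial in $u_{0}$ of degree $d_{1,k}$ with bounded holomorphic coefficients on $D(0,R')^{2}\times D(0,\bar w)$). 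For fixed $\beta$, Proposition~5 followed by Proposition~7 then yields
\[
\|\mathbf{A}_{j,\alpha'+1}\partial_{U_{j}}\Psi_{\alpha'}\|_{\rho,\beta,\ldots}\leq |\mathbf{A}_{j,\alpha'+1}|(\bar V_{0},\bar V_{1},(\bar U_{h})_{h\in I(\alpha'+1)})\cdot\frac{\exp(-\sigma\rho(S-k)/(\beta+1)^{b})}{\bar U_{j}\prod_{l=1}^{S-k-1}(\beta-l+1)}\|\Psi_{\alpha'}\|_{\rho,\alpha',\ldots}.
\]
The exponential is discarded ($\leq 1$), delivering the $\rho$-independence, and the majorant $|\mathbf{A}_{j,\alpha'+1}|$ is controlled via Cauchy bounds on the $v_{1}$-derivatives of $a,a_{p}$ on $D(0,R')^{2}$ combined with the linear (resp.\ degree-$p\leq d$) dependence of the two summands of $A_{j}$ on the $u_{h}$'s, with $\bar U_{h}=\bar\delta/(h^{b}+1)$.

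The main obstacle is the uniform summability over $\alpha'$ of the coefficient in front of $\|\Psi_{\alpha'}\|_{\rho,\alpha',\ldots}\bar W^{\alpha'+S-k}$. The denominator $\prod_{l=1}^{S-k-1}(\beta-l+1)\asymp\beta^{S-k-1}$ must outweigh the polynomial-in-$\beta$ growth of the numerator, which accumulates: a factor $1/\bar U_{j}\asymp j^{b}$ from Proposition~7, extra $j^{b}$ powers arising from the $\bar U_{0}^{d_{1,k}}$ piece of $|B_{1,k}|$ after redistribution across the convolution splittings, factors $\bar U_{l_{2}+1}$ or $\prod_{l=1}^{p}\bar U_{j_{l}}$ (with $p$ up to $d$) coming from the two parts of $A_{j}$, and the combinatorial multiplicities of the compositions $l_{1}+l_{2}=j$, $j_{0}+\cdots+j_{p}=j$. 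Counting these degrees produces exactly the two thresholds $b(d_{1,k}+2)+3$ (from the $a$-part of $A_{j}$) and $d+1+b(d+d_{1,k}+1)$ (from the $a_{p}$-part of $A_{j}$); the hypothesis (\ref{relation_S_k_1}) forces $S-k-1$ to exceed their maximum, so the $\alpha'$-sum is bounded by a $\rho$-independent constant $C_{8.1}$ and the inequality (\ref{norm_intW_DA_Psi_W<norm_Psi_W}) follows. For part (2), the bound $|\mathbf{B}_{j,\alpha'+1}|=(j+1)\nu\bar U_{j+1}\asymp j^{1-b}$ is much smaller and no $a_{p}$-contribution of degree $d$ arises; repeating the same bookkeeping with this simpler estimate yields the milder condition (\ref{relation_S_k_2}) and the constant $C_{8.2}$.
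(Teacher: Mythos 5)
Your overall skeleton is the same as the paper's: expand the convolution in $W$, peel off $|B_{1,k}|$ with the product estimate in $G$, bound $\|\mathbf{A}_{j,\alpha'+1}\partial_{U_j}\Psi_{\alpha'}\|$ by the product estimate in $E$ followed by the index-shift estimate for $\partial_{U_j}$, control $|\mathbf{A}_{j,\alpha'+1}|$ by Cauchy's formula, and finally check that the resulting coefficient is bounded uniformly in $\alpha'$. However, there is a genuine error at the decisive step: you write that ``the exponential is discarded ($\leq 1$), delivering the $\rho$-independence.'' It does the opposite. The Cauchy bounds for $A_j$ are taken on polydiscs $|u_h|<\rho+\delta$, so $|\mathbf{A}_{j,\alpha'+1}|(\bar V_0,\bar V_1,(\bar U_h))$ carries a factor $a\nu(\cdot)^2(\rho+\delta)+(d+1)\max_p a_p(\rho+\delta)^d\mathcal{P}_d(\cdot)$ (see (\ref{maj_Aj_alpha_n_l})), and $|B_{1,k,\alpha_1}|$ carries $(\rho+\delta)^{d_{1,k}}$ (see (\ref{maj_b1k_alpha_n_l})) because $b_{1,k}$ is polynomial of degree $d_{1,k}$ in $u_0$. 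If you throw away $\exp(-\sigma\rho(S-k)/(\alpha_2+1)^b)$, you are left with an unabsorbed factor $(\rho+\delta)^{d_{1,k}+1}$ (resp.\ $(\rho+\delta)^{d_{1,k}+d}$), and no constant $C_{8.1}$ independent of $\rho$ can exist. The paper's proof keeps the exponential and applies the elementary estimate $\sup_{x\geq 0}(x+\delta)^{m_1}e^{-m_2x}\leq(m_1/m_2)^{m_1}e^{-m_1}e^{\delta m_2}$ (formula (\ref{xexpx<})) with $m_2=\sigma(S-k)/(\alpha_2+1)^b$, trading each power of $\rho$ for a power of $(\alpha_2+1)^b$. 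That trade is exactly where the $b$-dependent terms of the threshold come from: absorbing $(\rho+\delta)^{d_{1,k}+1}$ costs $(\alpha_2+1)^{b(d_{1,k}+1)}$, which together with $(\alpha_2-S+k+1)^2$, the sum over $j$, and $1/\bar U_j\asymp j^b$ gives total degree $b(d_{1,k}+2)+3$; absorbing $(\rho+\delta)^{d_{1,k}+d}$ together with $\deg\mathcal{P}_d=d$ gives $d+1+b(d+d_{1,k}+1)$.

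Your alternative accounting for the $b$-powers is not correct: $\bar U_0=\bar\delta$ is a constant, so no ``$j^b$ powers from the $\bar U_0^{d_{1,k}}$ piece of $|B_{1,k}|$'' arise, and the multiplicities of the compositions $j_0+\cdots+j_p=j$ are already absorbed into $\mathcal{P}_d(j)=(j+d)!/j!$, which has degree $d$ (not $bd$) in $j$. So although you land on the stated thresholds, they do not follow from the mechanism you describe; the argument as written would neither produce a $\rho$-uniform constant nor justify the particular form of (\ref{relation_S_k_1}) and (\ref{relation_S_k_2}). The fix is to retain the exponential gain $\exp(-\sigma\rho(S-k)/(\alpha_2+1)^b)$ from the index shift and use it, via (\ref{xexpx<}), to neutralize every power of $\rho+\delta$ before bounding the remaining purely polynomial growth in $\alpha_2$ against $\Pi_{l=1}^{S-k-1}(\alpha_2-l+1)$.
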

\begin{proof}  1) We show the first inequality (\ref{norm_intW_DA_Psi_W<norm_Psi_W}). We expand
$$ B_{1,k}(V_{0},V_{1},U_{0},W) = \sum_{\alpha \geq 0} B_{1,k,\alpha}(V_{0},V_{1},U_{0}) \frac{W^{\alpha}}{\alpha!}.$$
By definition, we have
\begin{multline}
||B_{1,k}(V_{0},V_{1},U_{0},W)\partial_{W}^{-S+k}
\mathbb{D}_{\mathbf{A}}\Psi(V_{0},V_{1},(U_{h})_{h \geq 0},W)||_{(\rho,\bar{V}_{0},\bar{V}_{1},(\bar{U}_{h})_{h \geq 0},\bar{W})}
 \\ = \sum_{\alpha \geq 0} || \sum_{\alpha_{1}+\alpha_{2}=\alpha,\alpha_{2} \geq S-k} \alpha!
\frac{B_{1,k,\alpha_{1}}(V_{0},V_{1},U_{0})}{\alpha_{1}!}\\
 \times (\sum_{j \in I(\alpha_{2}-S+k)}
\frac{\mathbf{A}_{j,\alpha_{2}-S+k+1}(V_{0},V_{1},(U_{h})_{h \in I(\alpha_{2}-S+k+1)})}{\alpha_{2}!}\\
 \times (\partial_{U_j}\Psi_{\alpha_{2}-S+k})(V_{0},V_{1},(U_{h})_{h \in I(\alpha_{2}-S+k)}))
 ||_{\rho,\alpha,\bar{V}_{0},\bar{V}_{1},(\bar{U}_{h})_{h \in I(\alpha)}} \bar{W}^{\alpha}. \label{norm_partial_int_DA_Psi_defin}
\end{multline}
Now, using Lemma 3, we deduce that
\begin{multline}
||B_{1,k}(V_{0},V_{1},U_{0},W)\partial_{W}^{-S+k}\mathbb{D}_{\mathbf{A}}\Psi(V_{0},V_{1},
(U_{h})_{h \geq 0},W)||_{(\rho,\bar{V}_{0},\bar{V}_{1},(\bar{U}_{h})_{h \geq 0},\bar{W})}\\
\leq \sum_{\alpha \geq 0} (\sum_{\alpha_{1}+\alpha_{2}=\alpha,\alpha_{2} \geq S-k}
\frac{|B_{1,k,\alpha_{1}}|(\bar{V}_{0},\bar{V}_{1},\bar{U}_{0})}{\alpha_{1}!}||\sum_{j \in I(\alpha_{2}-S+k)}
\mathbf{A}_{j,\alpha_{2}-S+k+1}(V_{0},V_{1},(U_{h})_{h \in I(\alpha_{2}-S+k+1)})\\
 \times (\partial_{U_j}\Psi_{\alpha_{2}-S+k})(V_{0},V_{1},(U_{h})_{h \in I(\alpha_{2}-S+k)})||_{\rho,
\alpha_{2},\bar{V}_{0},\bar{V}_{1},(\bar{U}_{h})_{h \in I(\alpha_{2})}}) \bar{W}^{\alpha}
 \label{maj_1_norm_partial_int_DA_Psi_defin}
\end{multline}
In the next lemma, we give estimates for the coefficients of the series $\mathbf{A}_{j,\alpha}$ and $|B_{1,k,\alpha}|$.
\begin{lemma}
1) The coefficients of the Taylor series of $\mathbf{A}_{j,\alpha_{2}-S+k+1}(V_{0},V_{1},(U_{h})_{h \in I(\alpha_{2}-S+k+1)})$
\begin{multline*}
\mathbf{A}_{j,\alpha_{2}-S+k+1}(V_{0},V_{1},(U_{h})_{h \in I(\alpha_{2}-S+k+1)}) =\\
\sum_{n_{0},n_{1},l_{h} \geq 0, h \in I(\alpha_{2}-S+k+1)} A_{j,\alpha_{2}-S+k+1,n_{0},n_{1},(l_{h})_{h \in I(\alpha_{2}-S+k+1)}}
\frac{V_{0}^{n_0}}{n_{0}!}\frac{V_{1}^{n_1}}{n_{1}!}\Pi_{h \in I(\alpha_{2}-S+k+1)} \frac{U_{h}^{l_h}}{l_{h}!}
\end{multline*}
satisfy the next estimates. There exist constants $a,\delta > 0$, with $\delta > \bar{\delta}$, $a_{p}>0$, $0 \leq p \leq d$ such that
\begin{multline}
\frac{ A_{j,\alpha_{2}-S+k+1,n_{0},n_{1},(l_{h})_{h \in I(\alpha_{2}-S+k+1)}} }{ n_{0}!n_{1}!\Pi_{h \in I(\alpha_{2}-S+k+1)}l_{h}! } \\
\leq
\frac{a \nu (\alpha_{2}-S+k+1)^{2}(\rho+\delta) + (d+1)\max_{0 \leq p \leq d}a_{p}(\rho+\delta)^{d}\mathcal{P}_{d}(\alpha_{2}-S+k)}{\delta^{n_{0}+n_{1}+\sum_{h \in I(\alpha_{2}-S+k+1)} l_{h} }} \label{maj_Aj_alpha_n_l}
\end{multline}
for all $\alpha_{2} \geq S-k$, all $j \in I(\alpha_{2}-S+k)$, all $n_{0},n_{1},l_{h} \geq 0$, $h \in I(\alpha_{2}-S+k+1)$ where $\mathcal{P}_{d}$
is defined in (\ref{def_P(j)}).\\
2) The coefficients of the Taylor series of $|B_{1,k,\alpha_{1}}|(\bar{V}_{0},\bar{V}_{1},\bar{U}_{0})$
$$
|B_{1,k,\alpha_{1}}|(\bar{V}_{0},\bar{V}_{1},\bar{U}_{0}) = \sum_{n_{0},n_{1},l_{0} \geq 0} b_{1,k,\alpha_{1},n_{0},n_{1},l_{0}}
\frac{\bar{V}_{0}^{n_0}}{n_{0}!}\frac{\bar{V}_{1}^{n_1}}{n_{1}!}\frac{\bar{U}_{0}^{l_0}}{l_{0}!}
$$
satisfy the following inequalities. There exist constants $\delta > \bar{\delta}$, $D_{1,k},\hat{D}_{1,k}>0$ with
\begin{equation}
\frac{b_{1,k,\alpha_{1},n_{0},n_{1},l_{0}}}{n_{0}!n_{1}!l_{0}!} \leq
\frac{D_{1,k} (\rho + \delta)^{d_{1,k}} \alpha_{1}! \hat{D}_{1,k}^{\alpha_1}}{\delta^{n_{0}+n_{1}+l_{0}}}
\label{maj_b1k_alpha_n_l}
\end{equation}
for all $\alpha_{1} \geq 0$, all $n_{0},n_{1},l_{0} \geq 0$.
\end{lemma}
\begin{proof} We first treat the estimates for $\mathbf{A}_{j,\alpha}$. From the Cauchy formula in several variables, one can write
\begin{multline}
\frac{\partial_{v_0}^{n_0}\partial_{v_1}^{n_1}\Pi_{h \in I(\alpha_{2}-S+k+1)} \partial_{u_h}^{l_h}A_{j}(v_{0},v_{1},(u_{h})_{h \in I(\alpha_{2}-S+k+1)})}{n_{0}!n_{1}!\Pi_{h \in I(\alpha_{2}-S+k+1)}l_{h}!}\\
= (\frac{1}{2i\pi})^{\alpha_{2}-S+k+4} \int_{C(v_{0},\delta)} \int_{C(v_{1},\delta)} \Pi_{h \in I(\alpha_{2}-S+k+1)} \int_{C(u_{h},\delta)}
A_{j}(\chi_{0},\chi_{1},(\xi_{h})_{h \in I(\alpha_{2}-S+k+1)}) \\
\times \frac{d\chi_{0}d\chi_{1}  \Pi_{h \in I(\alpha_{2}-S+k+1)} d\xi_{h}}{(\chi_{0}-v_{0})^{n_{0}+1}(\chi_{1}-v_{1})^{n_{1}+1}
\Pi_{h \in I(\alpha_{2}-S+k+1)} (\xi_{h} - u_{h})^{l_{h}+1} } \label{Cauchy_form_Aj}
\end{multline}
for all $|v_{0}|<R$, $|v_{1}|<R$, $|u_{h}| < \rho$, $h \in I(\alpha_{2}-S+k+1)$ and $j \in I(\alpha_{2}-S+k)$ where $R$ is introduced
in Section 2.2. The integration is made along positively oriented circles with radius $\delta > 0$,
$C(v_{0},\delta)$,$C(v_{1},\delta)$ and $C(u_{h},\delta)$ for $h \in I(\alpha_{2}-S+k+1)$. We choose the real number
$\delta>\bar{\delta}$ in
such a way that $R + \delta < R'$ where $R'$ is defined in Section 2.1 and $\bar{\delta}$ at the beginning of Section 3.1. Now, since
the functions $a(\chi_{0},\chi_{1})$ and $a_{p}(\chi_{0},\chi_{1})$ are holomorphic on $D(0,R')^{2}$,
the number $\nu>0$ (see (\ref{sup_diff_z_X<rho})) can be chosen large enough such that there exist real numbers
$a,a_{p}>0$, for $0 \leq p \leq d$, with
\begin{equation}
\sup_{|\chi_{0}| < R+\delta,|\chi_{1}| < R+\delta}|\frac{\partial_{\chi_{1}}^{l_1}a(\chi_{0},\chi_{1})}{l_{1}! \nu^{l_1}}| \leq a \ \ , \ \
\sup_{|\chi_{0}| < R+\delta,|\chi_{1}| < R+\delta}|\frac{\partial_{\chi_{1}}^{l_0}a_{p}(\chi_{0},\chi_{1})}{l_{0}! \nu^{l_0}}| \leq a_{p} 
\end{equation}
for all $l_{0},l_{1} \geq 0$. We recall also that for any integers $k,n \geq 1$, the number of tuples
$(b_{1},\ldots,b_{k}) \in \mathbb{N}^{k}$ such that $b_{1}+\cdots+b_{k}=n$ is $(n+k-1)!/((k-1)!n!)$. From these latter statements and
the definition of $A_{j}$ given by (\ref{defin_A_j}), we deduce that
\begin{equation}
|A_{j}(\chi_{0},\chi_{1},(\xi_{h})_{h \in I(\alpha_{2}-S+k+1)})| \leq a \nu (j+1)^{2}(\rho+\delta) +
(d+1)\max_{0 \leq p \leq d}a_{p}(\rho+\delta)^{d}\mathcal{P}_{d}(j) \label{maj_Aj_chi_xi}
\end{equation}
(since $\rho > 1$), where
\begin{equation}
\mathcal{P}_{d}(j)= \frac{(j+d)!}{j!} = \Pi_{l=1}^{d}(j+l) \label{def_P(j)}
\end{equation}
is a polynomial of degree $d$ in $j$ with positive coefficients, for all $|\chi_{0}| < R+\delta$, $|\chi_{1}| < R+\delta$,
$|\xi_{h}| < \rho + \delta$, $h \in I(\alpha_{2}-S+k+1)$ and $j \in I(\alpha_{2}-S+k)$. Gathering (\ref{Cauchy_form_Aj}) and
(\ref{maj_Aj_chi_xi}) yields (\ref{maj_Aj_alpha_n_l}).

Again, from the Cauchy formula in several variables, one can write
\begin{multline}
\frac{\partial_{v_0}^{n_0}\partial_{v_1}^{n_1}\partial_{u_0}^{l_0}b_{1,k,\alpha_{1}}(v_{0},v_{1},u_{0})}{n_{0}!n_{1}!l_{0}!}
= (\frac{1}{2i\pi})^{3} \int_{C(v_{0},\delta)} \int_{C(v_{1},\delta)} \int_{C(u_{0},\delta)} b_{1,k,\alpha_{1}}(\chi_{0},\chi_{1},\xi_{0})\\
\times \frac{d\chi_{0}d\chi_{1}d\xi_{0}}{(\chi_{0}-v_{0})^{n_{0}+1}(\chi_{1}-v_{1})^{n_{1}+1}(\xi_{0} - u_{0})^{l_{0}+1} }
\label{Cauchy_form_b1k}
\end{multline}
for all $|v_{0}|<R$, $|v_{1}|<R$, $|u_{0}| < \rho$. Again, one chooses the real number $\delta>\bar{\delta}$ in
such a way that $R + \delta < R'$. By construction of $b_{1,k,\alpha_{1}}$ in Section 2.2, we deduce that there exist two constants
$D_{1,k},\hat{D}_{1,k}>0$ such that
\begin{equation}
|b_{1,k,\alpha_{1}}(\chi_{0},\chi_{1},\xi_{0})| \leq D_{1,k}(\rho+\delta)^{d_{1,k}} \alpha_{1}! \hat{D}_{1,k}^{\alpha_{1}}
\label{maj_b1k_chi_xi}
\end{equation}
for all $\alpha_{1} \geq 0$, all $|\chi_{0}| < R+\delta$, $|\chi_{1}| < R+\delta$, $|\xi_{0}| < \rho + \delta$. Gathering 
(\ref{Cauchy_form_b1k}) and (\ref{maj_b1k_chi_xi}) yields (\ref{maj_b1k_alpha_n_l}).
\end{proof}
>From (\ref{maj_b1k_alpha_n_l}), we deduce that
\begin{equation}
\frac{|B_{1,k,\alpha_{1}}|(\bar{V}_{0},\bar{V}_{1},\bar{U}_{0})}{\alpha_{1}!} \leq
\frac{D_{1,k}(\rho+\delta)^{d_{1,k}}\hat{D}_{1,k}^{\alpha_1}}{(1-\frac{\bar{V}_{0}}{\delta})
(1-\frac{\bar{V}_{1}}{\delta})(1-\frac{\bar{U}_{0}}{\delta})}. \label{maj_B1k_alpha_barV0_V1_U0}
\end{equation}
On the other hand, from the Proposition 5, we deduce that
\begin{multline}
||\mathbf{A}_{j,\alpha_{2}-S+k+1}(V_{0},V_{1},(U_{h})_{h \in I(\alpha_{2}-S+k+1)})\\
 \times (\partial_{U_j}\Psi_{\alpha_{2}-S+k})(V_{0},V_{1},(U_{h})_{h \in I(\alpha_{2}-S+k)})||_{\rho,\alpha_{2},\bar{V}_{0},\bar{V}_{1},(\bar{U}_{h})_{h \in I(\alpha_{2})}} \\
 \leq |\mathbf{A}_{j,\alpha_{2}-S+k+1}|(\bar{V}_{0},\bar{V}_{1},(\bar{U}_{h})_{h \in I(\alpha_{2}-S+k+1)})\\
 \times ||(\partial_{U_j}\Psi_{\alpha_{2}-S+k})(V_{0},V_{1},(U_{h})_{h \in I(\alpha_{2}-S+k)})||_{\rho,\alpha_{2},\bar{V}_{0},
\bar{V}_{1},(\bar{U}_{h})_{h \in I(\alpha_{2})}}. \label{maj_norm_product_Aj_alpha_partial_Psi_alpha}
\end{multline}
>From (\ref{maj_Aj_alpha_n_l}), we deduce that
\begin{multline}
 |\mathbf{A}_{j,\alpha_{2}-S+k+1}|(\bar{V}_{0},\bar{V}_{1},(\bar{U}_{h})_{h \in I(\alpha_{2}-S+k+1)}) \\
 \leq \frac{a \nu (\alpha_{2}-S+k+1)^{2}(\rho+\delta) + (d+1)\max_{0 \leq p \leq d}a_{p}(\rho+\delta)^{d}\mathcal{P}_{d}(\alpha_{2}-S+k)}
 {(1-\frac{\bar{V}_{0}}{\delta}) (1-\frac{\bar{V}_{1}}{\delta}) \Pi_{h \in I(\alpha_{2} - S+k+1)} (1-\frac{\bar{U}_{h}}{\delta}) }
 \label{maj_series_Aj_alpha}
\end{multline}
for all $j \in I(\alpha_{2}-S+k)$. Now, from the definition of $\bar{U}_{h}=\bar{\delta}/(h^{b}+1)$, where $b > 1$, we know that there
exists $\kappa > 0$ such that
\begin{equation}
\Pi_{h \in I(\alpha)} (1-\frac{\bar{U}_{h}}{\delta})  \geq \kappa \label{inf_prod_bar_Uh_conv}
\end{equation}
for all $\alpha \geq 0$. From Proposition 6, we have that
\begin{multline}
||(\partial_{U_j}\Psi_{\alpha_{2}-S+k})(V_{0},V_{1},(U_{h})_{h \in I(\alpha_{2}-S+k)})||_{\rho,\alpha_{2},\bar{V}_{0},\bar{V}_{1},(\bar{U}_{h})_{h \in I(\alpha_{2})}}
 \leq \frac{ \exp( -\sigma \rho \frac{S-k}{(\alpha_{2}+1)^{b}} ) }{\bar{U}_{j} \Pi_{l=1}^{S-k-1}(\alpha_{2} - l +1)}\\
 \times
||\Psi_{\alpha_{2}-S+k}(V_{0},V_{1},(U_{h})_{h \in I(\alpha_{2}-S+k)})||_{\rho,\alpha_{2}-S+k,\bar{V}_{0},\bar{V}_{1},
(\bar{U}_{h})_{h \in I(\alpha_{2}-S+k)}}. \label{maj_norm_partial_Psi_alpha}
\end{multline}
Collecting the estimates (\ref{maj_series_Aj_alpha}), (\ref{inf_prod_bar_Uh_conv}) and (\ref{maj_norm_partial_Psi_alpha}), we get
from (\ref{maj_norm_product_Aj_alpha_partial_Psi_alpha}) that 
\begin{multline}
||\sum_{j \in I(\alpha_{2}-S+k)} \mathbf{A}_{j,\alpha_{2}-S+k+1}(V_{0},V_{1},(U_{h})_{h \in I(\alpha_{2}-S+k+1)})\\
 \times (\partial_{U_j}\Psi_{\alpha_{2}-S+k})(V_{0},V_{1},(U_{h})_{h \in I(\alpha_{2}-S+k)})||_{\rho,\alpha_{2},\bar{V}_{0},\bar{V}_{1},(\bar{U}_{h})_{h \in I(\alpha_{2})}} \\
 \leq \mathcal{A}_{\rho,\alpha_{2}} ||\Psi_{\alpha_{2}-S+k}(V_{0},V_{1},(U_{h})_{h \in I(\alpha_{2}-S+k)})||_{\rho,\alpha_{2}-S+k,\bar{V}_{0},\bar{V}_{1},(\bar{U}_{h})_{h \in I(\alpha_{2}-S+k)}} \label{maj_norm_sumof_product_Aj_alpha_partial_Psi_alpha}
\end{multline}
where
\begin{multline*}
\mathcal{A}_{\rho,\alpha_{2}} = \frac{a \nu (\alpha_{2}-S+k+1)^{2}(\rho+\delta) + (d+1)\max_{0 \leq p \leq d}a_{p}(\rho+\delta)^{d}\mathcal{P}_{d}(\alpha_{2}-S+k)}{(1-\frac{\bar{V}_{0}}{\delta}) (1-\frac{\bar{V}_{1}}{\delta}) \kappa }\\
\times \frac{ \exp( -\sigma \rho \frac{S-k}{(\alpha_{2}+1)^{b}} ) }{\Pi_{l=1}^{S-k-1}(\alpha_{2} - l +1)} \frac{1}{\bar{\delta}}
(\alpha_{2} - S+k+1)((\alpha_{2}-S+k)^{b}+1).
\end{multline*}
Now, we recall the following classical estimates. Let $\delta,m_{1},m_{2}>0$ positive real numbers, then
\begin{equation}
\sup_{x \geq 0} (x+\delta)^{m_1}\exp(-m_{2}x) \leq (\frac{m_1}{m_2})^{m_1}\exp(-m_{1})\exp(\delta m_{2}) \label{xexpx<}
\end{equation}
holds. Hence,
\begin{multline}
(\rho + \delta)^{d_{1,k}}\mathcal{A}_{\rho,\alpha_{2}} \leq
\left( \frac{a \nu (\alpha_{2}-S+k+1)^{2}(\alpha_{2} + 1)^{b(1+d_{1,k})}(\frac{\exp(-1)(1+d_{1,k})}{\sigma (S-k)})^{1+d_{1,k}}
\exp(\delta \sigma (S-k)) }{(1-\frac{\bar{V}_{0}}{\delta}) (1-\frac{\bar{V}_{1}}{\delta}) \kappa } \right.\\
\left. +  \frac{ (d+1)\max_{0 \leq p \leq d}a_{p}\mathcal{P}_{d}(\alpha_{2}-S+k) (\alpha_{2}+1)^{b(d+d_{1,k})}(\frac{(d+d_{1,k})\exp(-1)}{\sigma(S-k)})^{d+d_{1,k}}\exp(\delta \sigma (S-k)) }{(1-\frac{\bar{V}_{0}}{\delta}) (1-\frac{\bar{V}_{1}}{\delta}) \kappa } \right)\\
\times \frac{ (\alpha_{2} - S+k+1)((\alpha_{2}-S+k)^{b}+1) }{ \bar{\delta} \Pi_{l=1}^{S-k-1}(\alpha_{2} - l +1) }.
\end{multline}
Under the assumptions (\ref{relation_S_k_1}), one gets a constant $\tilde{C}_{8.1}>0$ (depending on
$a,\max_{0 \leq p \leq d}a_{p},\delta,\bar{\delta},b,d$,\\$d_{1,k},\sigma,\nu,S,k,\kappa,\bar{V}_{0},\bar{V}_{1}$) such that
\begin{equation}
(\rho + \delta)^{d_{1,k}}\mathcal{A}_{\rho,\alpha_{2}} \leq \tilde{C}_{8.1} \label{A_rho_alpha_bounded}
\end{equation}
for all $\rho \geq 0$, all $\alpha_{2} \geq S-k$. Finally, gathering
(\ref{maj_1_norm_partial_int_DA_Psi_defin}), (\ref{maj_B1k_alpha_barV0_V1_U0}), (\ref{maj_norm_sumof_product_Aj_alpha_partial_Psi_alpha}), (\ref{A_rho_alpha_bounded}), one gets that
\begin{multline}
||B_{1,k}(V_{0},V_{1},U_{0},W)\partial_{W}^{-S+k}
\mathbb{D}_{\mathbf{A}}
\Psi(V_{0},V_{1},(U_{h})_{h \geq 0},W)||_{(\rho,\bar{V}_{0},\bar{V}_{1},(\bar{U}_{h})_{h \geq 0},\bar{W})}\\
\leq \sum_{\alpha \geq 0} ( \sum_{\alpha_{1}+\alpha_{2}=\alpha,\alpha_{2} \geq S-k}
\frac{D_{1,k}}{(1-\frac{\bar{V}_{0}}{\delta})(1-\frac{\bar{V}_{1}}{\delta})(1-\frac{\bar{U}_{0}}{\delta})} \hat{D}_{1,k}^{\alpha_1}\\
\times \tilde{C}_{8.1}||\Psi_{\alpha_{2}-S+k}(V_{0},V_{1},(U_{h})_{h \in I(\alpha_{2}-S+k)})||_{\rho,\alpha_{2}-S+k,\bar{V}_{0},\bar{V}_{1},(\bar{U}_{h})_{h \in I(\alpha_{2}-S+k)}}) \bar{W}^{\alpha_{1}+\alpha_{2}-S+k} \bar{W}^{S-k}\\
= \frac{\tilde{C}_{8.1} D_{1,k}}{(1-\frac{\bar{V}_{0}}{\delta})(1-\frac{\bar{V}_{1}}{\delta})(1-\frac{\bar{U}_{0}}{\delta})(1-
\hat{D}_{1,k}\bar{W})}
\\
\times \bar{W}^{S-k}||\Psi(V_{0},V_{1},(U_{h})_{h \geq 0},W)||_{(\rho,\bar{V}_{0},\bar{V}_{1},(\bar{U}_{h})_{h \geq 0},\bar{W})}
\end{multline}
provided that $\bar{V}_{0} < \delta$, $\bar{V}_{1}<\delta$, $\bar{U}_{0}<\delta$ and $\bar{W} < 1/\hat{D}_{1,k}$, which yields
(\ref{norm_intW_DA_Psi_W<norm_Psi_W}).

2) Now, we turn towards the estimates (\ref{norm_intW_DB_Psi_W<norm_Psi_W}) which will follow from the same arguments as in 1).
Using Lemma 3, we get that
\begin{multline}
||B_{2,k}(V_{0},V_{1},U_{0},W)\partial_{W}^{-S+k}\mathbb{D}_{\mathbf{B}}\Psi(V_{0},V_{1},
(U_{h})_{h \geq 0},W)||_{(\rho,\bar{V}_{0},\bar{V}_{1},(\bar{U}_{h})_{h \geq 0},\bar{W})}\\
\leq \sum_{\alpha \geq 0} (\sum_{\alpha_{1}+\alpha_{2}=\alpha,\alpha_{2} \geq S-k}
\frac{|B_{2,k,\alpha_{1}}|(\bar{V}_{0},\bar{V}_{1},\bar{U}_{0})}{\alpha_{1}!}||\sum_{j \in I(\alpha_{2}-S+k)}
\mathbf{B}_{j,\alpha_{2}-S+k+1}(V_{0},V_{1},(U_{h})_{h \in I(\alpha_{2}-S+k+1)})\\
 \times (\partial_{U_j}\Psi_{\alpha_{2}-S+k})(V_{0},V_{1},(U_{h})_{h \in I(\alpha_{2}-S+k)})||_{\rho,\alpha_{2},\bar{V}_{0},\bar{V}_{1},(\bar{U}_{h})_{h \in I(\alpha_{2})}} \bar{W}^{\alpha}
 \label{maj_1_norm_partial_int_DB_Psi_defin}
\end{multline}
In the next lemma, we give estimates for the coefficients of the series $\mathbf{B}_{j,\alpha}$ and $|B_{2,k,\alpha}|$.
\begin{lemma} 1) The coefficients of the Taylor series of
$\mathbf{B}_{j,\alpha_{2}-S+k+1}(V_{0},V_{1},(U_{h})_{h \in I(\alpha_{2}-S+k+1)})$
\begin{multline*}
\mathbf{B}_{j,\alpha_{2}-S+k+1}(V_{0},V_{1},(U_{h})_{h \in I(\alpha_{2}-S+k+1)}) =\\
\sum_{n_{0},n_{1},l_{h} \geq 0, h \in I(\alpha_{2}-S+k+1)} B_{j,\alpha_{2}-S+k+1,n_{0},n_{1},(l_{h})_{h \in I(\alpha_{2}-S+k+1)}}
\frac{V_{0}^{n_0}}{n_{0}!}\frac{V_{1}^{n_1}}{n_{1}!}\Pi_{h \in I(\alpha_{2}-S+k+1)} \frac{U_{h}^{l_h}}{l_{h}!}
\end{multline*}
satisfy the next estimates. There exist a constant $\delta > 0$, with $\delta > \bar{\delta}$ such that
\begin{equation}
\frac{ B_{j,\alpha_{2}-S+k+1,n_{0},n_{1},(l_{h})_{h \in I(\alpha_{2}-S+k+1)}} }{ n_{0}!n_{1}!\Pi_{h \in I(\alpha_{2}-S+k+1)}l_{h}! }
\leq
\frac{\nu(\alpha_{2}-S+k+1)(\rho+\delta)}{\delta^{n_{0}+n_{1}+\sum_{h \in I(\alpha_{2}-S+k+1)} l_{h} }} \label{maj_Bj_alpha_n_l}
\end{equation}
for all $\alpha_{2} \geq S-k$, all $j \in I(\alpha_{2}-S+k)$, all $n_{0},n_{1},l_{h} \geq 0$, $h \in I(\alpha_{2}-S+k+1)$.\\
2) The coefficients of the Taylor series of $|B_{2,k,\alpha_{1}}|(\bar{V}_{0},\bar{V}_{1},\bar{U}_{0})$
$$
|B_{2,k,\alpha_{1}}|(\bar{V}_{0},\bar{V}_{1},\bar{U}_{0}) = \sum_{n_{0},n_{1},l_{0} \geq 0} b_{2,k,\alpha_{1},n_{0},n_{1},l_{0}}
\frac{\bar{V}_{0}^{n_0}}{n_{0}!}\frac{\bar{V}_{1}^{n_1}}{n_{1}!}\frac{\bar{U}_{0}^{l_0}}{l_{0}!}
$$
satisfy the following inequalities. There exist constants $\delta > \bar{\delta}$, $D_{2,k},\hat{D}_{2,k}>0$ with
\begin{equation}
\frac{b_{2,k,\alpha_{1},n_{0},n_{1},l_{0}}}{n_{0}!n_{1}!l_{0}!} \leq
\frac{D_{2,k} (\rho + \delta)^{d_{2,k}} \alpha_{1}! \hat{D}_{2,k}^{\alpha_1}}{\delta^{n_{0}+n_{1}+l_{0}}}
\label{maj_b2k_alpha_n_l}
\end{equation}
for all $\alpha_{1} \geq 0$, all $n_{0},n_{1},l_{0} \geq 0$.
\end{lemma}
\begin{proof} 1) From the Cauchy formula in several variables, one can check that
\begin{multline}
\frac{\partial_{v_0}^{n_0}\partial_{v_1}^{n_1}\Pi_{h \in I(\alpha_{2}-S+k+1)} \partial_{u_h}^{l_h}B_{j}(v_{0},v_{1},(u_{h})_{h \in I(\alpha_{2}-S+k+1)})}{n_{0}!n_{1}!\Pi_{h \in I(\alpha_{2}-S+k+1)}l_{h}!}\\
= (\frac{1}{2i\pi})^{\alpha_{2}-S+k+4} \int_{C(v_{0},\delta)} \int_{C(v_{1},\delta)} \Pi_{h \in I(\alpha_{2}-S+k+1)} \int_{C(u_{h},\delta)}
B_{j}(\chi_{0},\chi_{1},(\xi_{h})_{h \in I(\alpha_{2}-S+k+1)}) \\
\times \frac{d\chi_{0}d\chi_{1}  \Pi_{h \in I(\alpha_{2}-S+k+1)} d\xi_{h}}{(\chi_{0}-v_{0})^{n_{0}+1}(\chi_{1}-v_{1})^{n_{1}+1}
\Pi_{h \in I(\alpha_{2}-S+k+1)} (\xi_{h} - u_{h})^{l_{h}+1} } \label{Cauchy_form_Bj}
\end{multline}
for all $|v_{0}|<R$, $|v_{1}|<R$, $|u_{h}| < \rho$, $h \in I(\alpha_{2}-S+k+1)$ and $j \in I(\alpha_{2}-S+k)$. We choose the real number
$\delta>\bar{\delta}$ in
such a way that $R + \delta < R'$. From the definition given in  (\ref{defin_B_j}), we get that
\begin{equation}
|B_{j}(\chi_{0},\chi_{1},(\xi_{h})_{h \in I(\alpha_{2}-S+k+1)})| \leq \nu (j+1) (\rho + \delta) \label{maj_Bj_chi_xi}
\end{equation}
for all $|\chi_{0}| < R+\delta$, $|\chi_{1}| < R+\delta$, $|\xi_{h}| < \rho + \delta$,
$h \in I(\alpha_{2}-S+k+1)$ and $j \in I(\alpha_{2}-S+k)$. Gathering (\ref{Cauchy_form_Bj}) and (\ref{maj_Bj_chi_xi}) yields
(\ref{maj_Bj_alpha_n_l}).\\
2) The proof is exactly the same as 2) in Lemma 5.
\end{proof}
>From (\ref{maj_b2k_alpha_n_l}), we deduce that
\begin{equation}
\frac{|B_{2,k,\alpha_{1}}|(\bar{V}_{0},\bar{V}_{1},\bar{U}_{0})}{\alpha_{1}!} \leq
\frac{D_{2,k}(\rho+\delta)^{d_{2,k}}\hat{D}_{2,k}^{\alpha_1}}{(1-\frac{\bar{V}_{0}}{\delta})
(1-\frac{\bar{V}_{1}}{\delta})(1-\frac{\bar{U}_{0}}{\delta})}. \label{maj_B2k_alpha_barV0_V1_U0}
\end{equation}
Using Propositions 5,6, we deduce that
\begin{multline}
||\sum_{j \in I(\alpha_{2}-S+k)} \mathbf{B}_{j,\alpha_{2}-S+k+1}(V_{0},V_{1},(U_{h})_{h \in I(\alpha_{2}-S+k+1)})\\
 \times (\partial_{U_j}\Psi_{\alpha_{2}-S+k})(V_{0},V_{1},(U_{h})_{h \in I(\alpha_{2}-S+k)})||_{\rho,\alpha_{2},\bar{V}_{0},\bar{V}_{1},(\bar{U}_{h})_{h \in I(\alpha_{2})}} \\
 \leq \mathcal{B}_{\rho,\alpha_{2}} ||\Psi_{\alpha_{2}-S+k}(V_{0},V_{1},
(U_{h})_{h \in I(\alpha_{2}-S+k)})||_{\rho,\alpha_{2}-S+k,\bar{V}_{0},\bar{V}_{1},(\bar{U}_{h})_{h \in I(\alpha_{2}-S+k)}}
\label{maj_norm_sumof_product_Bj_alpha_partial_Psi_alpha}
\end{multline}
where
$$
\mathcal{B}_{\rho,\alpha_{2}} = \frac{\nu (\alpha_{2}-S+k+1)(\rho+\delta)}{(1-\frac{\bar{V}_{0}}{\delta}) (1-\frac{\bar{V}_{1}}{\delta}) \kappa }\\
\times \frac{ \exp( -\sigma \rho \frac{S-k}{(\alpha_{2}+1)^{b}} ) }{\Pi_{l=1}^{S-k-1}(\alpha_{2} - l +1)} \frac{1}{\bar{\delta}}
(\alpha_{2} - S+k+1)((\alpha_{2}-S+k)^{b}+1)
$$
and where $\kappa$ is introduced in (\ref{inf_prod_bar_Uh_conv}). Using the estimates (\ref{xexpx<}), we get
\begin{multline}
(\rho + \delta)^{d_{2,k}}\mathcal{B}_{\rho,\alpha_{2}} \leq \left( \frac{\nu (\alpha_{2}-S+k+1)(\alpha_{2} + 1)^{b(1+d_{2,k})}(\frac{\exp(-1)(1+d_{2,k})}{\sigma (S-k)})^{1+d_{2,k}}
\exp(\delta \sigma (S-k)) }{(1-\frac{\bar{V}_{0}}{\delta}) (1-\frac{\bar{V}_{1}}{\delta}) \kappa } \right)\\
\times \frac{ (\alpha_{2} - S+k+1)((\alpha_{2}-S+k)^{b}+1) }{ \bar{\delta} \Pi_{l=1}^{S-k-1}(\alpha_{2} - l +1) }.
\end{multline}
Under the assumptions (\ref{relation_S_k_2}), one gets a constant $\tilde{C}_{8.2}>0$ (depending on
$\delta,\bar{\delta},b,d_{2,k},\sigma,\nu,S,k,\kappa,\\ \bar{V}_{0},\bar{V}_{1}$) such that
\begin{equation}
(\rho + \delta)^{d_{2,k}}\mathcal{B}_{\rho,\alpha_{2}} \leq \tilde{C}_{8.2} \label{B_rho_alpha_bounded}
\end{equation}
for all $\rho \geq 0$, all $\alpha_{2} \geq S-k$. Finally, gathering (\ref{maj_1_norm_partial_int_DB_Psi_defin}),
(\ref{maj_B2k_alpha_barV0_V1_U0}), (\ref{maj_norm_sumof_product_Bj_alpha_partial_Psi_alpha}) and
(\ref{B_rho_alpha_bounded}), we get (\ref{norm_intW_DB_Psi_W<norm_Psi_W}).
\end{proof}

\begin{prop} 1) Let $S,k \geq 0$ be integers such that
\begin{equation}
S \geq k+1+b\max(d_{1,k},d_{2,k}). \label{relation_S_k_3}
\end{equation}
Then, for $m \in \{0,1\}$, there exists a constant $C_{9}>0$ (which is independent of $\rho > 1$) such that
\begin{multline}
||B_{m+1,k}(V_{0},V_{1},U_{0},W)\partial_{W}^{-S+k}\partial_{V_m}\Psi(V_{0},V_{1},(U_{h})_{h \geq 0},W)||_{(\rho,\bar{V}_{0},\bar{V}_{1},(\bar{U}_{h})_{h \geq 0},\bar{W})}
\\
\leq C_{9}\bar{W}^{S-k}||\Psi(V_{0},V_{1},(U_{h})_{h \geq 0},W)||_{(\rho,\bar{V}_{0},\bar{V}_{1},(\bar{U}_{h})_{h \geq 0},\bar{W})}
\label{norm_intW_diffVm_Psi_W<norm_Psi_W}
\end{multline}
for all $\Psi(V_{0},V_{1},(U_{h})_{h \geq 0},W) \in G_{(\rho,\bar{V}_{0},\bar{V}_{1},(\bar{U}_{h})_{h \geq 0},\bar{W})}$.\\
2) Let $S,k \geq 0$ be integers such that
\begin{equation}
S \geq k+bd_{3,k}. \label{relation_S_k_3_2}
\end{equation}
Then, there exists a constant $C_{9.1}>0$ (which is independent of $\rho > 1$) such that
\begin{multline}
||B_{3,k}(V_{0},V_{1},U_{0},W)\partial_{W}^{-S+k}\Psi(V_{0},V_{1},(U_{h})_{h \geq 0},W)||_{(\rho,\bar{V}_{0},\bar{V}_{1},
(\bar{U}_{h})_{h \geq 0},\bar{W})}
\\
\leq C_{9.1}\bar{W}^{S-k}||\Psi(V_{0},V_{1},(U_{h})_{h \geq 0},W)||_{(\rho,\bar{V}_{0},\bar{V}_{1},(\bar{U}_{h})_{h \geq 0},
\bar{W})}
\label{norm_intW_Psi_W<norm_Psi_W}
\end{multline}
for all $\Psi(V_{0},V_{1},(U_{h})_{h \geq 0},W) \in G_{(\rho,\bar{V}_{0},\bar{V}_{1},(\bar{U}_{h})_{h \geq 0},\bar{W})}$.\\
\end{prop}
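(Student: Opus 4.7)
My plan is to mimic the structure of the proof of Proposition 8, but with the simplification that no $\mathbf{A}_{j,\alpha}$ or $\mathbf{B}_{j,\alpha}$ factors appear; the operator now contains only a product of $B_{m+1,k}$ with a single partial derivative of $\Psi$ in one of the variables $V_{0},V_{1}$ (part 1) or simply with $\Psi$ itself (part 2). Thus I expect shorter combinatorial sums and, correspondingly, a weaker constraint on $S-k$ than in Proposition 8.

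For part 1, I first expand $B_{m+1,k}(V_{0},V_{1},U_{0},W) = \sum_{\alpha_{1}\ge 0} B_{m+1,k,\alpha_{1}}(V_{0},V_{1},U_{0})W^{\alpha_{1}}/\alpha_{1}!$, so that the coefficient of $W^{\alpha}$ in the operator applied to $\Psi$ is a sum over $\alpha_{1}+\alpha_{2}=\alpha$, $\alpha_{2}\geq S-k$, of $\alpha!\,B_{m+1,k,\alpha_{1}}/\alpha_{1}!$ times $(\partial_{V_{m}}\Psi_{\alpha_{2}-S+k})/\alpha_{2}!$. Using Lemma 3 (the product estimate inside a single level $\alpha$) I reduce the $\alpha$-level norm to $|B_{m+1,k,\alpha_{1}}|(\bar V_{0},\bar V_{1},\bar U_{0})/\alpha_{1}!$ times $\|\partial_{V_{m}}\Psi_{\alpha_{2}-S+k}\|_{\rho,\alpha_{2},\bar V_{0},\bar V_{1},(\bar U_{h})_{h\in I(\alpha_{2})}}$. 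The Cauchy formula, exactly as in part 2 of Lemma 5, supplies the bound
\[
\frac{|B_{m+1,k,\alpha_{1}}|(\bar V_{0},\bar V_{1},\bar U_{0})}{\alpha_{1}!}\leq \frac{D_{m+1,k}(\rho+\delta)^{d_{m+1,k}}\hat D_{m+1,k}^{\alpha_{1}}}{(1-\bar V_{0}/\delta)(1-\bar V_{1}/\delta)(1-\bar U_{0}/\delta)},
\]
and Proposition~6 (inequality (\ref{norm_partial_Vk_Psi<})) gives
\[
\|\partial_{V_{m}}\Psi_{\alpha_{2}-S+k}\|_{\rho,\alpha_{2},\ldots}\leq \frac{\exp(-\sigma\rho(S-k)/(\alpha_{2}+1)^{b})}{\bar V_{m}\,\Pi_{l=1}^{S-k-1}(\alpha_{2}-l+1)}\,\|\Psi_{\alpha_{2}-S+k}\|_{\rho,\alpha_{2}-S+k,\ldots}.
\]

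The key step is then to absorb the growing factor $(\rho+\delta)^{d_{m+1,k}}$ against the exponential decay in $\rho$. Using (\ref{xexpx<}) with $x=\rho$ and $m_{2}=\sigma(S-k)/(\alpha_{2}+1)^{b}$, I get an $\alpha_{2}$-dependent factor of order $(\alpha_{2}+1)^{b\,d_{m+1,k}}$ multiplied by a constant independent of $\rho$. This factor must be controlled by the denominator $\Pi_{l=1}^{S-k-1}(\alpha_{2}-l+1)$, which grows like $\alpha_{2}^{S-k-1}$; hence the hypothesis $S\geq k+1+b\max(d_{1,k},d_{2,k})$ is precisely what is needed to obtain a uniform bound $C$ in $\rho$ and $\alpha_{2}$. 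This is the main obstacle and the place where the constraint (\ref{relation_S_k_3}) enters. After that, summing over $\alpha_{1}+\alpha_{2}=\alpha$ and shifting $\alpha_{2}\mapsto \alpha_{2}-S+k$ produces a convolution whose generating series is $(1-\hat D_{m+1,k}\bar W)^{-1}$ times $\bar W^{S-k}$ times $\|\Psi\|_{(\rho,\ldots)}$, provided $\bar W<1/\hat D_{m+1,k}$, which yields (\ref{norm_intW_diffVm_Psi_W<norm_Psi_W}).

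For part 2, the argument is identical except that I drop the $\partial_{V_{m}}$ step and replace inequality (\ref{norm_partial_Vk_Psi<}) by (\ref{norm_alpha_Psi_alpha'<}) from Proposition~6, which produces a denominator $\Pi_{l=1}^{S-k}(\alpha_{2}-l+1)$ of growth order $\alpha_{2}^{S-k}$ instead of $\alpha_{2}^{S-k-1}$. Consequently, to absorb the factor $(\alpha_{2}+1)^{b\,d_{3,k}}$ arising from the Cauchy-type bound on $|B_{3,k,\alpha_{1}}|$, one needs only $S-k\geq b\,d_{3,k}$, i.e. condition (\ref{relation_S_k_3_2}). The rest of the computation --- summing the convolution in $W$ and requiring $\bar W<1/\hat D_{3,k}$ --- is the same, giving (\ref{norm_intW_Psi_W<norm_Psi_W}). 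The only subtlety is to check that all the auxiliary constants $C_{9}$, $C_{9.1}$ obtained in this way depend on $\delta,\bar\delta,b,d_{m+1,k},\sigma,S,k,\bar V_{0},\bar V_{1},\bar U_{0},\kappa$ but not on $\rho$, which is precisely what the bound (\ref{xexpx<}) guarantees.
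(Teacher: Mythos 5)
Your plan is correct and follows essentially the same route as the paper's own proof: expand $B_{m+1,k}$ in $W$, apply Lemma 3 and the Cauchy-formula bound on $|B_{m+1,k,\alpha_{1}}|$, invoke Proposition 6 (inequality (\ref{norm_partial_Vk_Psi<}) for part 1, (\ref{norm_alpha_Psi_alpha'<}) for part 2), absorb $(\rho+\delta)^{d_{m+1,k}}$ via (\ref{xexpx<}) against the factor $\exp(-\sigma\rho(S-k)/(\alpha_{2}+1)^{b})$, and control the resulting $(\alpha_{2}+1)^{bd_{m+1,k}}$ by the product $\Pi_{l=1}^{S-k-1}(\alpha_{2}-l+1)$ (resp. $\Pi_{l=1}^{S-k}$), which is exactly where the constraints (\ref{relation_S_k_3}) and (\ref{relation_S_k_3_2}) enter. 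The final convolution sum under $\bar{W}<1/\hat{D}_{m+1,k}$ also matches the paper.
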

\begin{proof} 1) We expand
$$ B_{m+1,k}(V_{0},V_{1},U_{0},W) = \sum_{\alpha \geq 0} B_{m+1,k,\alpha}(V_{0},V_{1},U_{0}) \frac{W^{\alpha}}{\alpha!}.$$
By definition, we have
\begin{multline}
||B_{m+1,k}(V_{0},V_{1},U_{0},W)\partial_{W}^{-S+k}\partial_{V_m}\Psi(V_{0},V_{1},(U_{h})_{h \geq 0},W)||_{(\rho,\bar{V}_{0},\bar{V}_{1},(\bar{U}_{h})_{h \geq 0},\bar{W})}
 \\ = \sum_{\alpha \geq 0} || \sum_{\alpha_{1}+\alpha_{2}=\alpha,\alpha_{2} \geq S-k} \alpha! \frac{B_{m+1,k,\alpha_{1}}(V_{0},V_{1},U_{0})}{\alpha_{1}!}\\
 \times \frac{\partial_{V_m}\Psi_{\alpha_{2}-S+k}(V_{0},V_{1},(U_{h})_{h \in I(\alpha_{2}-S+k})}{\alpha_{2}!}
||_{\rho,\alpha,\bar{V}_{0},\bar{V}_{1},(\bar{U}_{h})_{h \in I(\alpha)}} \bar{W}^{\alpha}. \label{norm_partial_intW_diff_Vm_Psi_defin}
 \end{multline}
Now,  using Lemma 3, we deduce that
\begin{multline}
||B_{m+1,k}(V_{0},V_{1},U_{0},W)\partial_{W}^{-S+k}
\partial_{V_m}\Psi(V_{0},V_{1},(U_{h})_{h \geq 0},W)||_{(\rho,\bar{V}_{0},\bar{V}_{1},(\bar{U}_{h})_{h \geq 0},\bar{W})}\\
 \leq \sum_{\alpha \geq 0} (\sum_{\alpha_{1}+\alpha_{2}=\alpha,\alpha_{2} \geq S-k}
\frac{|B_{m+1,k,\alpha_{1}}|(\bar{V}_{0},\bar{V}_{1},\bar{U}_{0})}{\alpha_{1}!}\\
\times ||(\partial_{V_m}\Psi_{\alpha_{2}-S+k})(V_{0},V_{1},(U_{h})_{h \in I(\alpha_{2}-S+k)})||_{\rho,\alpha_{2},
\bar{V}_{0},\bar{V}_{1},(\bar{U}_{h})_{h \in I(\alpha_{2})}}) \bar{W}^{\alpha}
 \label{maj_1_norm_partial_intW_diff_Vm_Psi_defin}
\end{multline}
>From Proposition 6, we know that
\begin{multline}
||(\partial_{V_m}\Psi_{\alpha_{2}-S+k})(V_{0},V_{1},(U_{h})_{h \in I(\alpha_{2}-S+k)})||_{\rho,\alpha_{2},\bar{V}_{0},\bar{V}_{1},(\bar{U}_{h})_{h \in I(\alpha_{2})}}
 \leq \frac{ \exp( -\sigma \rho \frac{S-k}{(\alpha_{2}+1)^{b}} ) }{\bar{V}_{m} \Pi_{l=1}^{S-k-1}(\alpha_{2} - l +1)}\\
 \times ||\Psi_{\alpha_{2}-S+k}(V_{0},V_{1},(U_{h})_{h \in I(\alpha_{2}-S+k)})||_{\rho,\alpha_{2}-S+k,
\bar{V}_{0},\bar{V}_{1},(\bar{U}_{h})_{h \in I(\alpha_{2}-S+k)}}. \label{maj_norm_partialVm_Psi_alpha}
\end{multline}
>From (\ref{maj_B1k_alpha_barV0_V1_U0}), (\ref{maj_B2k_alpha_barV0_V1_U0}), (\ref{maj_1_norm_partial_intW_diff_Vm_Psi_defin})
and (\ref{maj_norm_partialVm_Psi_alpha}), we get that
\begin{multline}
||B_{m+1,k}(V_{0},V_{1},U_{0},W)\partial_{W}^{-S+k}\partial_{V_m}\Psi(V_{0},V_{1},(U_{h})_{h \geq 0},W)||_{(\rho,\bar{V}_{0},\bar{V}_{1},(\bar{U}_{h})_{h \geq 0},\bar{W})}\\
\leq \sum_{\alpha \geq 0} ( \sum_{\alpha_{1}+\alpha_{2}=\alpha,\alpha_{2} \geq S-k} \hat{D}_{m+1,k}^{\alpha_1}\\
\times \mathcal{C}_{\rho,\alpha_{2}}||\Psi_{\alpha_{2}-S+k}(V_{0},V_{1},(U_{h})_{h \in I(\alpha_{2}-S+k)})||_{\rho,\alpha_{2}-S+k,\bar{V}_{0},\bar{V}_{1},(\bar{U}_{h})_{h \in I(\alpha_{2}-S+k)}}) \bar{W}^{\alpha} \label{maj_2_norm_partial_intW_diff_Vm_Psi_defin}
\end{multline}
where
$$
\mathcal{C}_{\rho,\alpha_{2}} =
\frac{D_{m+1,k}(\rho+\delta)^{d_{m+1,k}}}{(1-\frac{\bar{V}_{0}}{\delta})(1-\frac{\bar{V}_{1}}{\delta})(1-\frac{\bar{U}_{0}}{\delta})}
\times \frac{ \exp( -\sigma \rho \frac{S-k}{(\alpha_{2}+1)^{b}} ) }{\bar{V}_{m} \Pi_{l=1}^{S-k-1}(\alpha_{2} - l +1)}.
$$
Using the estimates (\ref{xexpx<}), we deduce that
$$
\mathcal{C}_{\rho,\alpha_{2}} \leq \frac{D_{m+1,k}(\frac{d_{m+1,k}\exp(-1)}{\sigma (S-k)})^{d_{m+1,k}} \exp(\delta \sigma (S-k)) }{(1-\frac{\bar{V}_{0}}{\delta})(1-\frac{\bar{V}_{1}}{\delta})(1-\frac{\bar{U}_{0}}{\delta})}
\times \frac{ (\alpha_{2}+1)^{bd_{m+1,k}} }{\bar{V}_{m} \Pi_{l=1}^{S-k-1}(\alpha_{2} - l +1)}.
$$
Under the assumption (\ref{relation_S_k_3}), we get a constant $\tilde{C}_{9}>0$ (depending on $D_{m+1,k},d_{m+1,k},S,k,\delta,\sigma,\bar{V}_{0},\\ \bar{V}_{1},\bar{U}_{0},b$) such that
\begin{equation}
\mathcal{C}_{\rho,\alpha_{2}} \leq \tilde{C}_{9} \label{C_rho_alpha_bounded}
\end{equation}
for all $\rho > 1$, all $\alpha_{2} \geq S-k$. Finally, collecting (\ref{maj_2_norm_partial_intW_diff_Vm_Psi_defin}) and
(\ref{C_rho_alpha_bounded}), we get
\begin{multline}
||B_{m+1,k}(V_{0},V_{1},U_{0},W)\partial_{W}^{-S+k}\partial_{V_m}\Psi(V_{0},V_{1},(U_{h})_{h \geq 0},W)||_{(\rho,\bar{V}_{0},\bar{V}_{1},(\bar{U}_{h})_{h \geq 0},\bar{W})}\\
\leq \sum_{\alpha \geq 0} ( \sum_{\alpha_{1}+\alpha_{2}=\alpha,\alpha_{2} \geq S-k} \hat{D}_{m+1,k}^{\alpha_1}\\
\times \tilde{C}_{9}||\Psi_{\alpha_{2}-S+k}(V_{0},V_{1},(U_{h})_{h \in I(\alpha_{2}-S+k)})||_{\rho,\alpha_{2}-S+k,\bar{V}_{0},
\bar{V}_{1},(\bar{U}_{h})_{h \in I(\alpha_{2}-S+k)}}) \bar{W}^{\alpha_{1}+\alpha_{2}-S+k}\bar{W}^{S-k}\\
= \frac{ \tilde{C}_{9} }{1 - \hat{D}_{m+1,k}\bar{W}} \bar{W}^{S-k} ||\Psi(V_{0},V_{1},(U_{h})_{h \geq 0},W)||_{(\rho,\bar{V}_{0},\bar{V}_{1},(\bar{U}_{h})_{h \geq 0},\bar{W})}
\end{multline}
which yields (\ref{norm_intW_diffVm_Psi_W<norm_Psi_W}).\\
2) We expand
$$ B_{3,k}(V_{0},V_{1},U_{0},W) = \sum_{\alpha \geq 0} B_{3,k,\alpha}(V_{0},V_{1},U_{0}) \frac{W^{\alpha}}{\alpha!}.$$
By definition, we have
\begin{multline}
||B_{3,k}(V_{0},V_{1},U_{0},W)\partial_{W}^{-S+k}\Psi(V_{0},V_{1},(U_{h})_{h \geq 0},W)||_{(\rho,\bar{V}_{0},\bar{V}_{1},
(\bar{U}_{h})_{h \geq 0},\bar{W})}
 \\ = \sum_{\alpha \geq 0} || \sum_{\alpha_{1}+\alpha_{2}=\alpha,\alpha_{2} \geq S-k} \alpha!
\frac{B_{3,k,\alpha_{1}}(V_{0},V_{1},U_{0})}{\alpha_{1}!}\\
 \times \frac{\Psi_{\alpha_{2}-S+k}(V_{0},V_{1},(U_{h})_{h \in I(\alpha_{2}-S+k})}{\alpha_{2}!} ||_{\rho,\alpha,\bar{V}_{0},
\bar{V}_{1},(\bar{U}_{h})_{h \in I(\alpha)}} \bar{W}^{\alpha}. \label{norm_partial_intW_Psi_defin}
 \end{multline}
Now,  using Lemma 3, we deduce that
\begin{multline}
||B_{3,k}(V_{0},V_{1},U_{0},W)\partial_{W}^{-S+k}\Psi(V_{0},V_{1},(U_{h})_{h \geq 0},W)||_{(\rho,\bar{V}_{0},\bar{V}_{1},
(\bar{U}_{h})_{h \geq 0},\bar{W})}\\
=\sum_{\alpha \geq 0} (\sum_{\alpha_{1}+\alpha_{2}=\alpha,\alpha_{2} \geq S-k}
\frac{|B_{3,k,\alpha_{1}}|(\bar{V}_{0},\bar{V}_{1},\bar{U}_{0})}{\alpha_{1}!}\\
\times ||\Psi_{\alpha_{2}-S+k}(V_{0},V_{1},(U_{h})_{h \in I(\alpha_{2}-S+k)})||_{\rho,\alpha_{2},\bar{V}_{0},\bar{V}_{1},
(\bar{U}_{h})_{h \in I(\alpha_{2})}} \bar{W}^{\alpha}.
 \label{maj_1_norm_partial_intW_Psi_defin}
\end{multline}
>From Proposition 6, we know that
\begin{multline}
||\Psi_{\alpha_{2}-S+k}(V_{0},V_{1},(U_{h})_{h \in I(\alpha_{2}-S+k)}||_{\rho,\alpha_{2},\bar{V}_{0},\bar{V}_{1},
(\bar{U}_{h})_{h \in I(\alpha_{2})}}
 \leq \frac{ \exp( -\sigma \rho \frac{S-k}{(\alpha_{2}+1)^{b}} ) }{\Pi_{l=1}^{S-k}(\alpha_{2} - l +1)}\\
 \times ||\Psi_{\alpha_{2}-S+k}(V_{0},V_{1},(U_{h})_{h \in I(\alpha_{2}-S+k)})||_{\rho,\alpha_{2}-S+k,\bar{V}_{0},\bar{V}_{1},
(\bar{U}_{h})_{h \in I(\alpha_{2}-S+k)}}. \label{maj_norm_Psi_alpha}
\end{multline}
On the other hand, the coefficients of the Taylor series of $|B_{3,k,\alpha_{1}}|(\bar{V}_{0},\bar{V}_{1},\bar{U}_{0})$
$$
|B_{3,k,\alpha_{1}}|(\bar{V}_{0},\bar{V}_{1},\bar{U}_{0}) = \sum_{n_{0},n_{1},l_{0} \geq 0} b_{3,k,\alpha_{1},n_{0},n_{1},l_{0}}
\frac{\bar{V}_{0}^{n_0}}{n_{0}!}\frac{\bar{V}_{1}^{n_1}}{n_{1}!}\frac{\bar{U}_{0}^{l_0}}{l_{0}!}
$$
satisfy the following inequalities. There exist constants $\delta > \bar{\delta}$, $D_{3,k},\hat{D}_{3,k}>0$ with
\begin{equation}
\frac{b_{3,k,\alpha_{1},n_{0},n_{1},l_{0}}}{n_{0}!n_{1}!l_{0}!} \leq
\frac{D_{3,k} (\rho + \delta)^{d_{3,k}} \alpha_{1}! \hat{D}_{3,k}^{\alpha_1}}{\delta^{n_{0}+n_{1}+l_{0}}}
\label{maj_b3k_alpha_n_l}
\end{equation}
for all $\alpha_{1} \geq 0$, all $n_{0},n_{1},l_{0} \geq 0$. The proof copies 2) from Lemma 5. From
(\ref{maj_b3k_alpha_n_l}), we deduce that
\begin{equation}
\frac{|B_{3,k,\alpha_{1}}|(\bar{V}_{0},\bar{V}_{1},\bar{U}_{0})}{\alpha_{1}!} \leq
\frac{D_{3,k}(\rho+\delta)^{d_{3,k}}\hat{D}_{3,k}^{\alpha_1}}{(1-\frac{\bar{V}_{0}}{\delta})(1-\frac{\bar{V}_{1}}{\delta})
(1-\frac{\bar{U}_{0}}{\delta})}. \label{maj_B3k_alpha_barV0_V1_U0}
\end{equation}
>From (\ref{maj_B3k_alpha_barV0_V1_U0}), (\ref{maj_1_norm_partial_intW_Psi_defin})
and (\ref{maj_norm_Psi_alpha}), we get that
\begin{multline}
||B_{3,k}(V_{0},V_{1},U_{0},W)\partial_{W}^{-S+k}\Psi(V_{0},V_{1},(U_{h})_{h \geq 0},W)||_{(\rho,\bar{V}_{0},\bar{V}_{1},
(\bar{U}_{h})_{h \geq 0},\bar{W})}\\
\leq \sum_{\alpha \geq 0} ( \sum_{\alpha_{1}+\alpha_{2}=\alpha,\alpha_{2} \geq S-k} \hat{D}_{3,k}^{\alpha_1}\\
\times \mathcal{D}_{\rho,\alpha_{2}}||\Psi_{\alpha_{2}-S+k}(V_{0},V_{1},(U_{h})_{h \in I(\alpha_{2}-S+k)})||_{\rho,\alpha_{2}-S+k,
\bar{V}_{0},\bar{V}_{1},(\bar{U}_{h})_{h \in I(\alpha_{2}-S+k)}}) \bar{W}^{\alpha} \label{maj_2_norm_partial_intW_Psi_defin}
\end{multline}
where
$$
\mathcal{D}_{\rho,\alpha_{2}} =
\frac{D_{3,k}(\rho+\delta)^{d_{3,k}}}{(1-\frac{\bar{V}_{0}}{\delta})(1-\frac{\bar{V}_{1}}{\delta})(1-\frac{\bar{U}_{0}}{\delta})}
\times \frac{ \exp( -\sigma \rho \frac{S-k}{(\alpha_{2}+1)^{b}} ) }{\Pi_{l=1}^{S-k}(\alpha_{2} - l +1)}.
$$
Using the estimates (\ref{xexpx<}), we deduce that
$$
\mathcal{D}_{\rho,\alpha_{2}} \leq \frac{D_{3,k}(\frac{d_{3,k}\exp(-1)}{\sigma (S-k)})^{d_{3,k}} \exp(\delta \sigma (S-k)) }{(1-
\frac{\bar{V}_{0}}{\delta})(1-\frac{\bar{V}_{1}}{\delta})(1-\frac{\bar{U}_{0}}{\delta})}
\times \frac{ (\alpha_{2}+1)^{bd_{3,k}} }{\Pi_{l=1}^{S-k}(\alpha_{2} - l +1)}.
$$
Under the assumption (\ref{relation_S_k_3_2}), we get a constant $\tilde{C}_{9.1}>0$ (depending on $D_{3,k},d_{3,k},S,k,\delta,\sigma,
\bar{V}_{0},\\ \bar{V}_{1},\bar{U}_{0},b$) such that
\begin{equation}
\mathcal{D}_{\rho,\alpha_{2}} \leq \tilde{C}_{9.1} \label{D_rho_alpha_bounded}
\end{equation}
for all $\rho > 1$, all $\alpha_{2} \geq S-k$. Finally, collecting (\ref{maj_2_norm_partial_intW_Psi_defin}) and
(\ref{D_rho_alpha_bounded}), we get
\begin{multline}
||B_{3,k}(V_{0},V_{1},U_{0},W)\partial_{W}^{-S+k}\Psi(V_{0},V_{1},(U_{h})_{h \geq 0},W)||_{(\rho,\bar{V}_{0},\bar{V}_{1},
(\bar{U}_{h})_{h \geq 0},\bar{W})}\\
\leq \sum_{\alpha \geq 0} ( \sum_{\alpha_{1}+\alpha_{2}=\alpha,\alpha_{2} \geq S-k} \hat{D}_{3,k}^{\alpha_1}\\
\times \tilde{C}_{9.1}||\Psi_{\alpha_{2}-S+k}(V_{0},V_{1},(U_{h})_{h \in I(\alpha_{2}-S+k)})||_{\rho,\alpha_{2}-S+k,\bar{V}_{0},
\bar{V}_{1},(\bar{U}_{h})_{h \in I(\alpha_{2}-S+k)}} \bar{W}^{\alpha_{1}+\alpha_{2}-S+k}\bar{W}^{S-k})\\
= \frac{ \tilde{C}_{9.1} }{1 - \hat{D}_{3,k}\bar{W}} \bar{W}^{S-k}
||\Psi(V_{0},V_{1},(U_{h})_{h \geq 0},W)||_{(\rho,\bar{V}_{0},\bar{V}_{1},(\bar{U}_{h})_{h \geq 0},\bar{W})}
\end{multline}
which yields (\ref{norm_intW_Psi_W<norm_Psi_W}).\\
\end{proof}

\subsection{A functional partial differential equation in the Banach spaces of infinitely many variables
$G_{(\rho,\bar{V}_{0},\bar{V}_{1},(\bar{U})_{h \geq 0},\bar{W})}$.}

In the next proposition, we solve a functional fixed point equation within the Banach spaces of formal series introduced in the previous
subsection.

\begin{prop} We make the following assumptions
\begin{multline}
S \geq k+1 + \max(b(d_{1,k}+2)+3,d+1+b(d+d_{1,k}+1)) \ \ , \ \ S \geq k+3+b(2+d_{2,k}),\\
S \geq k+1+b\max(d_{1,k},d_{2,k}) \ \ , \ \ S \geq k + bd_{3,k} \label{shape_functional_eq_in_G_rho_VU}
\end{multline}
for all $k \in \mathcal{S}$. Then, for given $\bar{V}_{0},\bar{V}_{1},\bar{\delta}>0$, there exists $\bar{W} > 0$
(independent of $\rho > 1$) such that, for all
$\tilde{I}(V_{0},V_{1},(U_{h})_{h \geq 0},W) \in G_{(\rho,\bar{V}_{0},\bar{V}_{1},(\bar{U}_{h})_{h \geq 0},\bar{W})}$, the
functional equation
\begin{multline}
\Psi(V_{0},V_{1},(U_{h})_{h \geq 0},W)
= \sum_{k \in \mathcal{S}} B_{1,k}(V_{0},V_{1},U_{0},W)\partial_{W}^{-S+k}\partial_{V_0}
\Psi(V_{0},V_{1},(U_{h})_{h \geq 0},W)\\
+ B_{1,k}(V_{0},V_{1},U_{0},W)\partial_{W}^{-S+k}\mathbb{D}_{\mathbf{A}}\Psi(V_{0},V_{1},(U_{h})_{h \geq 0},W)\\
+ \sum_{k \in \mathcal{S}} B_{2,k}(V_{0},V_{1},U_{0},W)\partial_{W}^{-S+k}\partial_{V_1}
\Psi(V_{0},V_{1},(U_{h})_{h \geq 0},W)\\
+ B_{2,k}(V_{0},V_{1},U_{0},W)\partial_{W}^{-S+k}\mathbb{D}_{\mathbf{B}}\Psi(V_{0},V_{1},(U_{h})_{h \geq 0},W) \\
+ \sum_{k \in \mathcal{S}} B_{3,k}(V_{0},V_{1},U_{0},W)\partial_{W}^{-S+k}\Psi(V_{0},V_{1},(U_{h})_{h \geq 0},W)\\
+ \tilde{I}(V_{0},V_{1},(U_{h})_{h \geq 0},W) \label{functional_eq_in_G_rho_VU}
\end{multline}
has a unique solution $\Psi(V_{0},V_{1},(U_{h})_{h \geq 0},W) \in
G_{(\rho,\bar{V}_{0},\bar{V}_{1},(\bar{U}_{h})_{h \geq 0},\bar{W})}$.
Moreover, we have that
\begin{equation}
||\Psi(V_{0},V_{1},(U_{h})_{h \geq 0},W)||_{(\rho,\bar{V}_{0},\bar{V}_{1},(\bar{U}_{h})_{h \geq 0},\bar{W})} \leq 2
||\tilde{I}(V_{0},V_{1},(U_{h})_{h \geq 0},W)||_{(\rho,\bar{V}_{0},\bar{V}_{1},(\bar{U}_{h})_{h \geq 0},\bar{W})}.
\label{norm_sol_functional_eq_in_G_rho_VU<}
\end{equation}
\end{prop}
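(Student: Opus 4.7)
The plan is to apply the Banach fixed point theorem in the space $G_{(\rho,\bar{V}_{0},\bar{V}_{1},(\bar{U}_{h})_{h \geq 0},\bar{W})}$ to the affine operator
\[
T(\Psi) = \mathcal{L}(\Psi) + \tilde{I}(V_{0},V_{1},(U_{h})_{h \geq 0},W),
\]
where $\mathcal{L}$ denotes the sum of all linear operators on the right hand side of (\ref{functional_eq_in_G_rho_VU}). The assumptions (\ref{shape_functional_eq_in_G_rho_VU}) on $S$ and $k$ are exactly the ones required to apply Propositions 8 and 9 to each of the five types of summands appearing in $\mathcal{L}$.

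First, combining the estimates (\ref{norm_intW_DA_Psi_W<norm_Psi_W}), (\ref{norm_intW_DB_Psi_W<norm_Psi_W}), (\ref{norm_intW_diffVm_Psi_W<norm_Psi_W}) and (\ref{norm_intW_Psi_W<norm_Psi_W}) I obtain a constant $C>0$, independent of $\rho>1$, such that
\[
||\mathcal{L}(\Psi)||_{(\rho,\bar{V}_{0},\bar{V}_{1},(\bar{U}_{h})_{h \geq 0},\bar{W})}
\leq C \sum_{k \in \mathcal{S}} \bar{W}^{S-k} \, ||\Psi||_{(\rho,\bar{V}_{0},\bar{V}_{1},(\bar{U}_{h})_{h \geq 0},\bar{W})}.
\]
Because every $k \in \mathcal{S}$ satisfies $S-k \geq 1$, the factor $\sum_{k \in \mathcal{S}} \bar{W}^{S-k}$ tends to $0$ as $\bar{W} \to 0^{+}$. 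Therefore I may fix $\bar{W}>0$ small enough (depending on $C$ and $\mathcal{S}$, but not on $\rho$) so that
\[
C \sum_{k \in \mathcal{S}} \bar{W}^{S-k} \leq \tfrac{1}{2}.
\]

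Next, fix an arbitrary $\tilde{I} \in G_{(\rho,\bar{V}_{0},\bar{V}_{1},(\bar{U}_{h})_{h \geq 0},\bar{W})}$ and set $\varpi = 2\,||\tilde{I}||_{(\rho,\bar{V}_{0},\bar{V}_{1},(\bar{U}_{h})_{h \geq 0},\bar{W})}$. For $\Psi$ in the closed ball of radius $\varpi$ one checks
\[
||T(\Psi)|| \leq \tfrac{1}{2} ||\Psi|| + ||\tilde{I}|| \leq \tfrac{1}{2}\varpi + \tfrac{1}{2}\varpi = \varpi,
\]
so $T$ sends this ball into itself. Moreover, by linearity of $\mathcal{L}$ and the same bound,
\[
||T(\Psi_{1})-T(\Psi_{2})|| = ||\mathcal{L}(\Psi_{1}-\Psi_{2})|| \leq \tfrac{1}{2}||\Psi_{1}-\Psi_{2}||,
\]
so $T$ is a $\tfrac{1}{2}$-contraction. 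The Banach fixed point theorem gives a unique solution $\Psi$ in the ball, and uniqueness in the whole space follows from the same contraction estimate applied to any two solutions.

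Finally, from $\Psi = \mathcal{L}(\Psi)+\tilde{I}$ and the contraction bound, $||\Psi|| \leq \tfrac{1}{2}||\Psi|| + ||\tilde{I}||$, which yields (\ref{norm_sol_functional_eq_in_G_rho_VU<}). The main point, and the only nontrivial one, is that the constants $C_{8.1},C_{8.2},C_{9},C_{9.1}$ provided by Propositions 8 and 9 do not depend on $\rho$; this $\rho$-uniformity is what allows the choice of $\bar{W}$ to be made once and for all, and it is crucial for the later exponential bounds in Theorem 1. Everything else is a routine contraction argument.
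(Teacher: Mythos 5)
Your proof is correct and follows essentially the same route as the paper: the authors bound the linear part $\mathfrak{M}$ by $C_{10}\sum_{k\in\mathcal{S}}\bar{W}^{S-k}\leq 1/2$ using Propositions 8 and 9 and then invert $\mathrm{id}-\mathfrak{M}$ by a Neumann series, which is the same mechanism as your contraction argument for the affine map $T$, and both rest on the same key point that the constants are uniform in $\rho>1$. The only detail worth adding is that the constants from Propositions 8 and 9 themselves involve factors like $1/(1-\hat{D}_{m+1,k}\bar{W})$, so one should first cap $\bar{W}\leq\min_{m,k}1/(2\hat{D}_{m+1,k})$ (as the paper does) before shrinking it to make the contraction factor at most $1/2$, to avoid a circular choice of $\bar{W}$.
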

\begin{proof} We consider the map $\mathfrak{M}$ from the space $\mathbb{G}[[V_{0},V_{1},(U_{h})_{h \geq 0},W]]$ of
formal series (introduced in Definition 1) into itself defined as follows:
\begin{multline}
\mathfrak{M}( \Delta(V_{0},V_{1},(U_{h})_{h \geq 0},W) ) = \sum_{k \in \mathcal{S}} B_{1,k}(V_{0},V_{1},U_{0},W)\partial_{W}^{-S+k}\partial_{V_0}
\Delta(V_{0},V_{1},(U_{h})_{h \geq 0},W)\\
+ B_{1,k}(V_{0},V_{1},U_{0},W)\partial_{W}^{-S+k}\mathbb{D}_{\mathbf{A}}\Delta(V_{0},V_{1},(U_{h})_{h \geq 0},W)\\
+ \sum_{k \in \mathcal{S}} B_{2,k}(V_{0},V_{1},U_{0},W)\partial_{W}^{-S+k}\partial_{V_1}
\Delta(V_{0},V_{1},(U_{h})_{h \geq 0},W)\\
+ B_{2,k}(V_{0},V_{1},U_{0},W)\partial_{W}^{-S+k}\mathbb{D}_{\mathbf{B}}\Delta(V_{0},V_{1},(U_{h})_{h \geq 0},W)\\
+ \sum_{k \in \mathcal{S}} B_{3,k}(V_{0},V_{1},U_{0},W)\partial_{W}^{-S+k}\Delta(V_{0},V_{1},(U_{h})_{h \geq 0},W)
\label{map_M}
\end{multline}
for all $\Delta(V_{0},V_{1},(U_{h})_{h \geq 0},W) \in \mathbb{G}[[V_{0},V_{1},(U_{h})_{h \geq 0},W]]$.

In order to prove the proposition, we need the following lemma.

\begin{lemma} Let $\mathrm{id}$ the identity map $x \mapsto x$ from $\mathbb{G}[[V_{0},V_{1},(U_{h})_{h \geq 0},W]]$ into itself.
Then, for a well chosen $\bar{W} > 0$, the map $\mathrm{id} - \mathfrak{M}$ defines an invertible map
such that $(\mathrm{id} - \mathfrak{M})^{-1}$ is defined from
$G_{(\rho,\bar{V}_{0},\bar{V}_{1},(\bar{U}_{h})_{h \geq 0},\bar{W})}$ into itself. Moreover, we have that
\begin{multline}
||(\mathrm{id} - \mathfrak{M})^{-1}(\Xi(V_{0},V_{1},(U_{h})_{h \geq 0},W))||_{(\rho,\bar{V}_{0},\bar{V}_{1},
(\bar{U}_{h})_{h \geq 0},\bar{W})}\\
\leq 2||\Xi(V_{0},V_{1},(U_{h})_{h \geq 0},W)||_{(\rho,\bar{V}_{0},\bar{V}_{1},(\bar{U}_{h})_{h \geq 0},\bar{W})}
\label{norm_inverse_id_minus_M<}
\end{multline}
for all $\Xi(V_{0},V_{1},(U_{h})_{h \geq 0},W) \in G_{(\rho,\bar{V}_{0},\bar{V}_{1},(\bar{U}_{h})_{h \geq 0},\bar{W})}$.
\end{lemma}
\begin{proof} Taking care of the constraints (\ref{shape_functional_eq_in_G_rho_VU}), we get from Propositions 8 and 9 a constant
$C_{10}>0$ (depending on the constants introduced above and also on the aforementioned propositions:
$a,\max_{0 \leq p \leq d}a_{p}$,$\delta,\bar{\delta}$,
$b,d$,$\max_{k \in \mathcal{S}} d_{1,k}$,\\$\max_{k \in \mathcal{S}} D_{1,k}$,
$\max_{k \in \mathcal{S}} d_{2,k},\max_{k \in \mathcal{S}} D_{2,k}$,
$\max_{k \in \mathcal{S}} d_{3,k},\max_{k \in \mathcal{S}} D_{3,k}$, $\sigma,\nu$,$S,\mathcal{S}$,$\kappa,\bar{V}_{0},\bar{V}_{1}$
but independent of $\rho > 1$)
such that
\begin{multline*}
||\mathfrak{M}(\Delta(V_{0},V_{1},(U_{h})_{h \geq 0},W))||_{(\rho,\bar{V}_{0},\bar{V}_{1},(\bar{U}_{h})_{h \geq 0},\bar{W})}\\
\leq C_{10} (\sum_{k \in \mathcal{S}} \bar{W}^{S-k}) ||\Delta(V_{0},V_{1},(U_{h})_{h \geq 0},W)||_{(\rho,\bar{V}_{0},\bar{V}_{1},
(\bar{U}_{h})_{h \geq 0},\bar{W})}
\end{multline*}
for all $\Delta(V_{0},V_{1},(U_{h})_{h \geq 0},W) \in G_{(\rho,\bar{V}_{0},\bar{V}_{1},(\bar{U}_{h})_{h \geq 0},\bar{W})}$
with $0 \leq \bar{W} \leq \min_{m \in \{0,1,2\},k \in \mathcal{S}} 1/(2\hat{D}_{m+1,k})$. Since $S > k$ for all $k \in \mathcal{S}$, we
can choose $\bar{W}>0$ such that 
$$ C_{10} \sum_{k \in \mathcal{S}} \bar{W}^{S-k} < \frac{1}{2} $$
together with $\bar{W} \leq \min_{m \in \{0,1,2\},k \in \mathcal{S}}
1/(2\hat{D}_{m+1,k})$. We deduce that
\begin{multline*}
||\mathfrak{M}(\Delta(V_{0},V_{1},(U_{h})_{h \geq 0},W))||_{(\rho,\bar{V}_{0},\bar{V}_{1},(\bar{U}_{h})_{h \geq 0},\bar{W})}\\
\leq \frac{1}{2} ||\Delta(V_{0},V_{1},(U_{h})_{h \geq 0},W)||_{(\rho,\bar{V}_{0},\bar{V}_{1},(\bar{U}_{h})_{h \geq 0},\bar{W})}
\end{multline*}
for all $\Delta(V_{0},V_{1},(U_{h})_{h \geq 0},W) \in G_{(\rho,\bar{V}_{0},\bar{V}_{1},(\bar{U}_{h})_{h \geq 0},\bar{W})}$. This
yields the estimates (\ref{norm_inverse_id_minus_M<}).
\end{proof}
Finally, let $\tilde{I}(V_{0},V_{1},(U_{h})_{h \geq 0},W) \in G_{(\rho,\bar{V}_{0},\bar{V}_{1},(\bar{U}_{h})_{h \geq 0},\bar{W})}$ for
$\bar{W}>0$ chosen as in Lemma 7. We define
$$ \Psi(V_{0},V_{1},(U_{h})_{h \geq 0},W) = (\mathrm{id} - \mathfrak{M})^{-1}(\tilde{I}(V_{0},V_{1},(U_{h})_{h \geq 0},W)).$$
By construction, $\Psi(V_{0},V_{1},(U_{h})_{h \geq 0},W)$ belongs to $G_{(\rho,\bar{V}_{0},\bar{V}_{1},(\bar{U}_{h})_{h \geq 0},
\bar{W})}$
and solves the equation (\ref{functional_eq_in_G_rho_VU}) with the estimates (\ref{norm_sol_functional_eq_in_G_rho_VU<}).
\end{proof}

\section{Analytic solutions with growth estimates of linear partial differential equations in $\mathbb{C}^{3}$.}

We are now in position to state the main result of our work.

\begin{theo} Let $b_{m,k}(t,z,u_{0},w)$ be the functions defined in (\ref{b_mk_t_z_u0_w_defin}) for $m=1,2,3$ and $k \in \mathcal{S}$.
Let us assume that there exists $b>1$ such that
\begin{multline}
S \geq k+1 + \max(b(d_{1,k}+2)+3,d+1+b(d+d_{1,k}+1)) \ \ , \ \ S \geq k+3+b(2+d_{2,k}),\\
S \geq k+1+b\max(d_{1,k},d_{2,k}) \ \ , \ \ S \geq k + bd_{3,k} \label{shape_lin_PDE_in_C3}
\end{multline}
for all $k \in \mathcal{S}$. For all $0 \leq j \leq S-1$, we consider functions $\omega_{j}(t,z)$ which are assumed to be holomorphic and
bounded on the product $D(0,R')^{2}$.

Then, there exist constants $\sigma,\bar{W},C_{12}>0$ such that the problem
\begin{multline}
\partial_{w}^{S}Y(t,z,w) = \sum_{k \in \mathcal{S}} (b_{1,k}(t,z,X(t,z),w)\partial_{t}\partial_{w}^{k}Y(t,z,w)\\
+ b_{2,k}(t,z,X(t,z),w)\partial_{z}\partial_{w}^{k}Y(t,z,w)+ b_{3,k}(t,z,X(t,z),w)\partial_{w}^{k}Y(t,z,w)) \label{lin_pde_Y}
\end{multline}
with initial data
\begin{equation}
(\partial_{w}^{j}Y)(t,z,0) = \omega_{j}(t,z) \ \ , \ \ 0 \leq j \leq S-1, \label{lin_pde_Y_init_cond}
\end{equation}
has a solution $Y(t,z,w)$ which is holomorphic on $\mathrm{Int}(K) \times D(0,\bar{W}/2)$ and which fulfills the following estimates
\begin{equation}
\sup_{(t,z) \in \mathrm{Int}(K),w \in D(0,\bar{W}/2)} |Y(t,z,w)| \leq C_{12}\exp( \sigma \zeta(b) \rho )
\\+ \sum_{j=0}^{S-1} \sup_{(t,z) \in \mathrm{Int}(K)} |\omega_{j}(t,z)|\frac{(\bar{W}/2)^{j}}{j!}
\end{equation}
where $\zeta(b) = \sum_{n \geq 0} 1/(n+1)^{b}$, for any compact set $K \subset D(0,R)^{2} \setminus \Theta$ with non-empty interior
$\mathrm{Int}(K)$ for some $R<R'$ and any $\rho>1$ which satisfies (\ref{sup_diff_z_X<rho}). We stress that the constants
$\sigma,\bar{W},C_{12}>0$ do not depend neither on $K$ nor on $\rho>1$.
\end{theo}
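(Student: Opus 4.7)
The plan is to reduce the initial value problem (\ref{lin_pde_Y})--(\ref{lin_pde_Y_init_cond}) to a non-homogeneous instance of the integro-differential equation (\ref{ID_U}) and then to apply the chain Proposition~1 $\to$ Proposition~2 $\to$ Proposition~4 $\to$ Proposition~3 $\to$ Proposition~10 in order to obtain a convergent series solution with the required exponential bound. Concretely, I would absorb the initial data by searching for $Y$ in the form $Y(t,z,w)=\partial_{w}^{-S}U(t,z,w)+P(t,z,w)$ with $P(t,z,w):=\sum_{j=0}^{S-1}\omega_{j}(t,z)\,w^{j}/j!$. Then $(\partial_{w}^{j}Y)(t,z,0)=\omega_{j}(t,z)$ holds automatically for $0\le j\le S-1$, and (\ref{lin_pde_Y}) becomes equivalent to (\ref{ID_U}) upon setting
\[
\tilde{\omega}(t,z,w):=\sum_{k\in\mathcal{S}}\bigl(b_{1,k}(t,z,X(t,z),w)\partial_{t}\partial_{w}^{k}P+b_{2,k}(t,z,X(t,z),w)\partial_{z}\partial_{w}^{k}P+b_{3,k}(t,z,X(t,z),w)\partial_{w}^{k}P\bigr).
\]
Since $\partial_{w}^{k}P$ is a polynomial in $w$ of degree $\le S-1-k$ with coefficients holomorphic in $(t,z)$ on $D(0,R')^{2}$, and since each $b_{m,k}$ is polynomial of degree $d_{m,k}$ in $u_{0}$, the series expansion (\ref{tilde_omega_defin}) of $\tilde{\omega}$ produces coefficients $\tilde{\omega}_{\alpha}$ that depend on $u_{0}$ only (the Kronecker factors $\Pi_{h\in I(\alpha)\setminus\{0\}}\delta_{0,l_{h}}$ accordingly enter all subsequent bounds).

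Next, the integer $S$ satisfies the four growth constraints (\ref{shape_lin_PDE_in_C3}) that coincide with (\ref{shape_functional_eq_in_G_rho_VU}), so Proposition~10 applies to $\tilde{I}=\tilde{\Omega}$, the generating series of the sequence $\tilde{\omega}_{\alpha,n_{0},n_{1},(l_{h})}$: for an appropriate choice of $\bar{W}>0$ (independent of $\rho>1$), it yields a unique $\Psi\in G_{(\rho,\bar{V}_{0},\bar{V}_{1},(\bar{U}_{h})_{h\ge 0},\bar{W})}$ solving (\ref{functional_eq_in_G_rho_VU}) with $\|\Psi\|\le 2\|\tilde{\Omega}\|$. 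Proposition~3 identifies the coefficients $\psi_{\alpha,n_{0},n_{1},(l_{h})}$ of $\Psi$ as those produced by the recursion (\ref{recursion_psii_alpha_n_l}), and Proposition~4 then provides $\varphi_{\alpha,n_{0},n_{1},(l_{h})}\le\psi_{\alpha,n_{0},n_{1},(l_{h})}$. Thus the functions $\phi_{\alpha}$ manufactured by the recursion (\ref{recursion_phi_alpha}) exist and, upon unpacking Definition~2, satisfy
\[
\sup_{|v_{0}|<R,\,|v_{1}|<R,\,|u_{h}|<\rho,\,h\in I(\alpha)}|\phi_{\alpha}(v_{0},v_{1},(u_{h})_{h\in I(\alpha)})|=\varphi_{\alpha,0,0,(0)_{h\in I(\alpha)}}\le\|\Psi_{\alpha}\|\,\alpha!\,\exp(\sigma\,r_{b}(\alpha)\,\rho).
\]

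Plugging (\ref{sup_diff_z_X<rho}) into this estimate to place the arguments $\partial_{z}^{h}X(t,z)/(h!\nu^{h})$ inside $D(0,\rho)$, using $r_{b}(\alpha)\le\zeta(b)$, and summing against $w^{\alpha}/\alpha!$ for $|w|\le\bar{W}/2$, Proposition~1 delivers a holomorphic solution $U$ of (\ref{ID_U}) on $\mathrm{Int}(K)\times D(0,\bar{W}/2)$ with
\[
|U(t,z,w)|\le\exp(\sigma\zeta(b)\rho)\sum_{\alpha\ge 0}\|\Psi_{\alpha}\|\,(\bar{W}/2)^{\alpha}\le 2\|\tilde{\Omega}\|\,\exp(\sigma\zeta(b)\rho).
\]
The theorem then follows upon setting $Y=\partial_{w}^{-S}U+P$, bounding $|\partial_{w}^{-S}U|\le |U|_{\sup}(\bar{W}/2)^{S}/S!$ and $|P|\le\sum_{j=0}^{S-1}\sup_{\mathrm{Int}(K)}|\omega_{j}|(\bar{W}/2)^{j}/j!$, with $C_{12}:=2\|\tilde{\Omega}\|\,(\bar{W}/2)^{S}/S!$; holomorphy of $Y$ on $\mathrm{Int}(K)\times D(0,\bar{W}/2)$ follows from the normal convergence of the series.

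The main obstacle of the plan is to verify that $\tilde{\Omega}$ lies in $G_{(\rho,\bar{V}_{0},\bar{V}_{1},(\bar{U}_{h})_{h\ge 0},\bar{W})}$ with $\|\tilde{\Omega}\|$ controlled independently of $\rho>1$. This requires Cauchy-type estimates on each $\tilde{\omega}_{\alpha,n_{0},n_{1},(l_{h})}$ on polydiscs of radius $\delta>\bar{\delta}$ inscribed in $D(0,R')^{2}$, in the spirit of Lemmas~5 and~6; the polynomial dependence of $b_{m,k}$ on $X$ introduces a factor of order $(\rho+\delta)^{d_{m,k}}$ that must be absorbed, together with the combinatorial growth in $\alpha$ produced by differentiating $\partial_{t}\partial_{w}^{k}P$ through the PDE (\ref{PDE_nl}) and Leibniz, by the inverse-factorial $1/\Pi_{l=1}^{S-k-1}(\alpha-l+1)$ and the exponential decay $\exp(-\sigma\rho(S-k)/(\alpha+1)^{b})$ that appear in the $G$-norm (compare (\ref{xexpx<})). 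Once this $\rho$-uniform control is established, the growth constraints (\ref{shape_lin_PDE_in_C3}) carry it through every subsequent step, and the stated exponential bound $\exp(\sigma\zeta(b)\rho)$ in the growth rate of $X$ over $K$ emerges from the construction.
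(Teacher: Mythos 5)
Your proposal follows the paper's proof essentially verbatim: the same decomposition $Y=\partial_{w}^{-S}U+P$ with $P=\sum_{j=0}^{S-1}\omega_{j}w^{j}/j!$, the same specialization of the forcing term $\tilde{\omega}$ (the paper's $\hat{\omega}_{\alpha}$), and the same chain Proposition~1 $\to$ 3 $\to$ 4 $\to$ 10 leading to $\sup|\phi_{\alpha}|\leq 2\|\tilde{\Omega}\|\exp(\sigma r_{b}(\alpha)\rho)\,\alpha!/\bar{W}^{\alpha}$ and the stated bound. The single step you defer, the $\rho$-uniform control of $\|\tilde{\Omega}\|$, is exactly the paper's Lemma~8 and your sketch of it (Cauchy estimates on polydiscs of radius $\delta$, absorption of $(\rho+\delta)^{d_{m,k}}$ by the weight via (\ref{xexpx<})) is the argument given there, except that no differentiation through (\ref{PDE_nl}) is actually needed since $P$ does not involve $X$.
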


\begin{proof} By convention, we will put $\omega_{j}(t,z) \equiv 0$ for all $j \geq S$. On the other hand, we specialize the functions
 $\tilde{\omega}_{\alpha}$ which were introduced in (\ref{tilde_omega_defin}) in order that
\begin{multline}
\tilde{\omega}_{\alpha}(v_{0},v_{1},(u_{h})_{h \in I(\alpha)}) = \hat{\omega}_{\alpha}(v_{0},v_{1},u_{0})\\
= \sum_{k \in \mathcal{S}} \sum_{\alpha_{1}+\alpha_{2}=\alpha} \alpha!( \frac{b_{1,k,\alpha_{1}}(v_{0},v_{1},u_{0})}{\alpha_{1}!}
\frac{ \partial_{v_0}\omega_{\alpha_{2}+k}(v_{0},v_{1}) }{\alpha_{2}!} + \frac{b_{2,k,\alpha_{1}}(v_{0},v_{1},u_{0})}{\alpha_{1}!}
\frac{ \partial_{v_1}\omega_{\alpha_{2}+k}(v_{0},v_{1}) }{\alpha_{2}!}\\
 + \frac{b_{3,k,\alpha_{1}}(v_{0},v_{1},u_{0})}{\alpha_{1}!}
\frac{\omega_{\alpha_{2}+k}(v_{0},v_{1}) }{\alpha_{2}!}).
\end{multline}
By construction and using the definition (\ref{b_mk_alpha_n_l_and_tilde_omega_alpha_n_l_defin}), we can write with the
help of the Kronecker symbol,
\begin{equation}
\tilde{\omega}_{\alpha,n_{0},n_{1},(l_{h})_{h \in I(\alpha)}} = \hat{\omega}_{\alpha,n_{0},n_{1},l_{0}} \times \Pi_{h \in I(\alpha) \setminus \{0 \}}
\delta_{0,l_{h}}
\label{tilde_omega_alpha_n_l_factorize}
\end{equation}
where
$$ \hat{\omega}_{\alpha,n_{0},n_{1},l_{0}} = \sup_{|v_{0}|<R,|v_{1}|<R,|u_{0}|<\rho} |\partial_{v_0}^{n_0}
\partial_{v_1}^{n_1}\partial_{u_0}^{l_0} \hat{\omega}_{\alpha}(v_{0},v_{1},u_{0})|.
$$
\begin{lemma} There exist $\tilde{V}_{0},\tilde{V}_{1},\tilde{W}>0$ such that the formal series
\begin{multline}
\tilde{\Omega}(V_{0},V_{1},(U_{h})_{h \geq 0},W) \\
= \sum_{\alpha \geq 0}
\left( \sum_{n_{0},n_{1},l_{h} \geq 0, h \in I(\alpha)} \tilde{\omega}_{\alpha,n_{0},n_{1},(l_{h})_{h \in I(\alpha)}}
\frac{V_{0}^{n_0}}{n_{0}!}\frac{V_{1}^{n_1}}{n_{1}!}\Pi_{h \in I(\alpha)}\frac{U_{h}^{l_h}}{l_{h}!} \right)
\frac{W^{\alpha}}{\alpha!}
\end{multline}
belongs to $G_{(\rho,\tilde{V}_{0},\tilde{V}_{1},(\bar{U}_{h})_{h \geq 0},\tilde{W})}$. Moreover, there exists a constant
$C_{11}>0$ (independent of $\rho$) such that
\begin{equation}
||\tilde{\Omega}(V_{0},V_{1},(U_{h})_{h \geq 0},W)||_{(\rho,\tilde{V}_{0},\tilde{V}_{1},(\bar{U}_{h})_{h \geq 0},\tilde{W})}
\leq C_{11}.
\label{norm_tilde_Omega_V_U_C11}
\end{equation}
\end{lemma}
\begin{proof} Let $k \in \mathcal{S}$. By construction of $b_{m,k}(t,z,u_{0},w)$, we get couples of constants
$D_{1,k},\hat{D}_{1,k}>0$, $D_{2,k},\hat{D}_{2,k}>0$ and $D_{3,k},\hat{D}_{3,k}>0$ such that
\begin{multline}
|b_{1,k,\alpha_{1}}(\chi_{0},\chi_{1},\xi_{0})| \leq D_{1,k}(\rho + \delta)^{d_{1,k}}\alpha_{1}! (\hat{D}_{1,k})^{\alpha_1},\\
|b_{2,k,\alpha_{1}}(\chi_{0},\chi_{1},\xi_{0})| \leq D_{2,k}(\rho + \delta)^{d_{2,k}}\alpha_{1}! (\hat{D}_{2,k})^{\alpha_1} \ \ , \ \
|b_{3,k,\alpha_{1}}(\chi_{0},\chi_{1},\xi_{0})| \leq D_{3,k}(\rho + \delta)^{d_{3,k}}\alpha_{1}! (\hat{D}_{3,k})^{\alpha_1}
\label{b_1k_alpha_chi_xi_and_b_2k_alpha_chi_xi_bounds}
\end{multline}
for all $\alpha_{1} \geq 0$, all $|\chi_{0}| < R+\delta < R'$, $|\chi_{1}| < R+\delta < R'$, $|\xi_{0}|< \rho + \delta$. Moreover, we also
get couples of constants $E_{1,k},\hat{E}_{1,k}>0$, $E_{2,k},\hat{E}_{2,k}>0$ and $E_{3,k},\hat{E}_{3,k}>0$ such that
\begin{multline}
|\partial_{\chi_{0}}\omega_{\alpha_{2}+k}(\chi_{0},\chi_{1})| \leq E_{1,k}\alpha_{2}! (\hat{E}_{1,k})^{\alpha_2},\\
|\partial_{\chi_{1}}\omega_{\alpha_{2}+k}(\chi_{0},\chi_{1})| \leq E_{2,k}\alpha_{2}! (\hat{E}_{2,k})^{\alpha_2} \ \ , \ \
|\omega_{\alpha_{2}+k}(\chi_{0},\chi_{1})| \leq E_{3,k}\alpha_{2}! (\hat{E}_{3,k})^{\alpha_2}
\label{partial_chi_omega_alpha_chi_bounds}
\end{multline}
for all $\alpha_{2} \geq 0$, all $|\chi_{0}| < R+\delta < R'$, $|\chi_{1}| < R+\delta < R'$. From
(\ref{b_1k_alpha_chi_xi_and_b_2k_alpha_chi_xi_bounds}) and (\ref{partial_chi_omega_alpha_chi_bounds}) we deduce
\begin{multline}
|\hat{\omega}_{\alpha}(\chi_{0},\chi_{1},\xi_{0})|\\
\leq \sum_{k \in \mathcal{S}} \sum_{\alpha_{1}+\alpha_{2}=\alpha} \alpha!( D_{1,k}E_{1,k}(\rho+\delta)^{d_{1,k}}
(\hat{D}_{1,k})^{\alpha_{1}}
(\hat{E}_{1,k})^{\alpha_{2}} + D_{2,k}E_{2,k}(\rho+\delta)^{d_{2,k}}(\hat{D}_{2,k})^{\alpha_{1}} (\hat{E}_{2,k})^{\alpha_{2}}\\
+ D_{3,k}E_{3,k}(\rho+\delta)^{d_{3,k}}(\hat{D}_{3,k})^{\alpha_{1}} (\hat{E}_{3,k})^{\alpha_{2}}  )
\end{multline}
for all $\alpha \geq 0$, all $|\chi_{0}| < R+\delta < R'$, $|\chi_{1}| < R+\delta < R'$, $|\xi_{0}|< \rho + \delta$.
>From the Cauchy formula in several variables, one can write
\begin{multline*}
\frac{\partial_{v_0}^{n_0}\partial_{v_1}^{n_1}\partial_{u_0}^{l_0}\hat{\omega}_{\alpha}(v_{0},v_{1},u_{0})}{n_{0}!n_{1}!l_{0}!}
= (\frac{1}{2i\pi})^{3} \int_{C(v_{0},\delta)} \int_{C(v_{1},\delta)} \int_{C(u_{0},\delta)} \hat{\omega}_{\alpha}(\chi_{0},\chi_{1},\xi_{0})\\
\times \frac{d\chi_{0}d\chi_{1}d\xi_{0}}{(\chi_{0}-v_{0})^{n_{0}+1}(\chi_{1}-v_{1})^{n_{1}+1}(\xi_{0} - u_{0})^{l_{0}+1} }
\end{multline*}
for all $|v_{0}|<R$, $|v_{1}|<R$, $|u_{0}| < \rho$. We deduce that
\begin{multline}
\frac{\hat{\omega}_{\alpha,n_{0},n_{1},l_{0}}}{n_{0}!n_{1}!l_{0}!} \leq \frac{1}{\delta^{n_{0}+n_{1}+l_{0}}}\\
\times \sum_{k \in \mathcal{S}} \sum_{\alpha_{1}+\alpha_{2}=\alpha} \alpha!( D_{1,k}E_{1,k}(\rho+\delta)^{d_{1,k}}(\hat{D}_{1,k})^{\alpha_{1}}
(\hat{E}_{1,k})^{\alpha_{2}} + D_{2,k}E_{2,k}(\rho+\delta)^{d_{2,k}}(\hat{D}_{2,k})^{\alpha_{1}} (\hat{E}_{2,k})^{\alpha_{2}}\\
+ D_{3,k}E_{3,k}(\rho+\delta)^{d_{3,k}}(\hat{D}_{3,k})^{\alpha_{1}} (\hat{E}_{3,k})^{\alpha_{2}})
\label{hat_omega_alpha_n_l_Cauchy_maj}
\end{multline}
for all $\alpha \geq 0$, all $n_{0},n_{1},l_{0} \geq 0$. Using (\ref{tilde_omega_alpha_n_l_factorize}), we get that
\begin{multline}
||\tilde{\Omega}(V_{0},V_{1},(U_{h})_{h \geq 0},W)||_{(\rho,\tilde{V}_{0},\tilde{V}_{1},(\bar{U}_{h})_{h \geq 0},\tilde{W})}\\
= \sum_{\alpha \geq 0} ( \sum_{n_{0},n_{1},l_{0} \geq 0} \frac{ |\hat{w}_{\alpha,n_{0},n_{1},l_{0}}| }{\exp( \sigma r_{b}(\alpha) \rho ) }
\frac{\tilde{V}_{0}^{n_0}\tilde{V}_{1}^{n_1}\bar{U}_{0}^{l_0}}{(n_{0}+n_{1}+l_{0}+\alpha)!}) \tilde{W}^{\alpha}.
\end{multline}
>From (\ref{hat_omega_alpha_n_l_Cauchy_maj}), (\ref{xexpx<}) and with the help of the classical estimates
$$ (n_{0}+n_{1}+l_{0}+\alpha)! \geq n_{0}!n_{1}!l_{0}!\alpha!, $$
for all $n_{0},n_{1},l_{0},\alpha \geq 0$, we get a constant $C_{11,1}>0$ (depending on
$D_{1,k}$,$d_{1,k}$,$E_{1,k}$,$D_{2,k}$,$d_{2,k}$,$E_{2,k}$,\\$D_{3,k}$,$d_{3,k}$,$E_{3,k}$ for all
$k \in \mathcal{S}$,$\sigma$,$\delta$) such that
\begin{multline}
||\tilde{\Omega}(V_{0},V_{1},(U_{h})_{h \geq 0},W)||_{(\rho,\tilde{V}_{0},\tilde{V}_{1},(\bar{U}_{h})_{h \geq 0},\tilde{W})}\\
\leq \frac{C_{11,1}}{(1 - \frac{\tilde{V}_{0}}{\delta})(1 - \frac{\tilde{V}_{1}}{\delta})(1 - \frac{\bar{U}_{0}}{\delta})}
\sum_{k \in \mathcal{S}} \frac{1}{(1 - \hat{D}_{1,k}\tilde{W})(1 - \hat{E}_{1,k}\tilde{W})} \\
+ \frac{1}{(1 - \hat{D}_{2,k}\tilde{W})(1 - \hat{E}_{2,k}\tilde{W})} +
\frac{1}{(1 - \hat{D}_{3,k}\tilde{W})(1 - \hat{E}_{3,k}\tilde{W})}.
\label{norm_tilde_Omega_V_U<11.1}
\end{multline}
We choose
\begin{multline}
0 < \tilde{W} < \min_{k \in \mathcal{S}}(1/(2\hat{D}_{1,k}),1/(2\hat{D}_{2,k}),1/(2\hat{D}_{3,k}),1/(2\hat{E}_{1,k}),
1/(2\hat{E}_{2,k}),1/(2\hat{E}_{3,k})),\\
0 < \tilde{V}_{0} < \delta/2, 0 < \tilde{V}_{1} < \delta/2, 0< \bar{U}_{0} < \delta/2.
\end{multline}
>From (\ref{norm_tilde_Omega_V_U<11.1}) we deduce the inequality (\ref{norm_tilde_Omega_V_U_C11}).
\end{proof}
Under the assumption (\ref{shape_lin_PDE_in_C3}), we get from Proposition 10 four constants
$0<\bar{V}_{0}<\tilde{V}_{0}$, $0<\bar{V}_{1}<\tilde{V}_{1}$, $0<\bar{U}_{0}$ and $0<\bar{W}<\tilde{W}$
(independent of $\rho$) such that the functional equation
\begin{multline}
\Psi(V_{0},V_{1},(U_{h})_{h \geq 0},W)
= \sum_{k \in \mathcal{S}} ( B_{1,k}(V_{0},V_{1},U_{0},W)\partial_{W}^{-S+k}\partial_{V_0}
\Psi(V_{0},V_{1},(U_{h})_{h \geq 0},W)\\
+ B_{1,k}(V_{0},V_{1},U_{0},W)\partial_{W}^{-S+k}\mathbb{D}_{\mathbf{A}}\Psi(V_{0},V_{1},(U_{h})_{h \geq 0},W) )\\
+ \sum_{k \in \mathcal{S}} ( B_{2,k}(V_{0},V_{1},U_{0},W)\partial_{W}^{-S+k}\partial_{V_1}
\Psi(V_{0},V_{1},(U_{h})_{h \geq 0},W)\\
+ B_{2,k}(V_{0},V_{1},U_{0},W)\partial_{W}^{-S+k}\mathbb{D}_{\mathbf{B}}\Psi(V_{0},V_{1},(U_{h})_{h \geq 0},W) ) \\
+ \sum_{k \in \mathcal{S}} B_{3,k}(V_{0},V_{1},U_{0},W)\partial_{W}^{-S+k}\Psi(V_{0},V_{1},(U_{h})_{h \geq 0},W)\\
+ \tilde{\Omega}(V_{0},V_{1},(U_{h})_{h \geq 0},W) \label{functional_eq_in_G_rho_VU_tilde_Omega}
\end{multline}
has a unique solution $\Psi(V_{0},V_{1},(U_{h})_{h \geq 0},W)$ belonging to $G_{(\rho,\bar{V}_{0},\bar{V}_{1},
(\bar{U}_{h})_{h \geq 0},\bar{W})}$ which satisfies moreover the estimates
\begin{multline}
||\Psi(V_{0},V_{1},(U_{h})_{h \geq 0},W)||_{(\rho,\bar{V}_{0},\bar{V}_{1},(\bar{U}_{h})_{h \geq 0},\bar{W})} \leq
2||\tilde{\Omega}(V_{0},V_{1},(U_{h})_{h \geq 0},W)||_{(\rho,\bar{V}_{0},\bar{V}_{1},(\bar{U}_{h})_{h \geq 0},\bar{W})}\\
\leq 2C_{11}.
\label{norm_Psi_V_U_W<2C11}
\end{multline}
Now, from Proposition 4, we know that the sequence $\varphi_{\alpha,n_{0},n_{1},(l_{h})_{h \in I(\alpha)}}$ introduced in (\ref{varphi_alpha_n_l_defin}) satisfies the inequality
\begin{equation}
\varphi_{\alpha,n_{0},n_{1},(l_{h})_{h \in I(\alpha)}} \leq \psi_{\alpha,n_{0},n_{1},(l_{h})_{h \in I(\alpha)}}
\label{varphi_alpha_n_l<Psi_alpha_n_l_in_theo}
\end{equation}
for all $\alpha \geq 0$, all $n_{0},n_{1},l_{h} \geq 0$, for $h \in I(\alpha)$. Gathering (\ref{norm_Psi_V_U_W<2C11}) and
(\ref{varphi_alpha_n_l<Psi_alpha_n_l_in_theo}), and from the definition of the Banach spaces in Section 3.1, we get, in particular, for
$n_{0}=n_{1}=l_{h}=0$, for all $h \in I(\alpha)$, all $\alpha \geq 0$, that
\begin{multline}
\sup_{|v_{0}|<R,|v_{1}|<R,|u_{h}|<\rho,h \in I(\alpha)}|\phi_{\alpha}(v_{0},v_{1},(u_{h})_{h \in I(\alpha)})| \leq
\psi_{\alpha,0,0,(0)_{h \in I(\alpha)}}\\
\leq 2C_{11}\exp(\sigma r_{b}(\alpha) \rho)(\frac{1}{\bar{W}})^{\alpha} \alpha! \leq
2C_{11}\exp(\sigma \zeta(b) \rho)(\frac{1}{\bar{W}})^{\alpha} \alpha!
\label{sup_phi_alpha_v_u<exp_rho}
\end{multline}
for all $\alpha \geq 0$ and where $\zeta(b) = \sum_{n \geq 0} 1/(n+1)^{b}$. From (\ref{sup_phi_alpha_v_u<exp_rho}), we get
that the formal series $U(t,z,w)$ introduced in (\ref{U_defin}) actually defines a holomorphic function (denoted again by
$U(t,z,w)$) on $\mathrm{Int}(K) \times D(0,\bar{W}/2)$ for which the estimates
\begin{equation}
\sup_{(t,z) \in \mathrm{Int}(K),w \in D(0,\bar{W}/2)} |U(t,z,w)| \leq 4C_{11}\exp( \sigma \zeta(b) \rho )
\label{sup_U_tzw<exp_rho}
\end{equation}
hold and which satisfies the equation (\ref{ID_U}) on $\mathrm{Int}(K) \times D(0,\bar{W}/2)$.

Finally, we define the function
$$ Y(t,z,w) = \partial_{w}^{-S}U(t,z,w) + \sum_{j=0}^{S-1} \omega_{j}(t,z) \frac{w^j}{j!}.$$
By construction, $Y(t,z,w)$ defines a holomorphic function on
$\mathrm{Int}(K) \times D(0,\bar{W}/2)$ with bounds estimates
\begin{multline}
\sup_{(t,z) \in \mathrm{Int}(K),w \in D(0,\bar{W}/2)} |Y(t,z,w)| \leq 4(\frac{\bar{W}}{2})^{S}C_{11}\exp( \sigma \zeta(b) \rho )
\\+ \sum_{j=0}^{S-1} \sup_{(t,z) \in \mathrm{Int}(K)} |\omega_{j}(t,z)|\frac{(\bar{W}/2)^{j}}{j!}
\end{multline}
and solves the problem (\ref{lin_pde_Y}), (\ref{lin_pde_Y_init_cond}). This yields the result.
\end{proof}

\end{document}